\documentclass[12pt]{amsart}
\theoremstyle{plain}
\usepackage{fullpage, url, amssymb, bm}
\usepackage[all]{xy} 

\usepackage{xcolor}
\usepackage{tikz-cd}
\DeclareFontFamily{U}{wncy}{}
\DeclareFontShape{U}{wncy}{m}{n}{<->wncyr10}{}
\DeclareSymbolFont{mcy}{U}{wncy}{m}{n}
\DeclareMathSymbol{\Sh}{\mathord}{mcy}{"58} 
\usepackage{calligra}
\usepackage[T1]{fontenc}

\usepackage {mathpazo, bbold}
\usepackage{aurical}
\usepackage[T1]{fontenc}
\usepackage{libertinus}

\usepackage[utf8]{inputenc}
\usepackage{amsmath}
\usepackage{amssymb}
\usepackage{enumitem, graphicx}

\usepackage{amsthm}

\DeclareFontEncoding{OT2}{}{} 

\newcommand{\clD}{{\mathcal D}}

\newcommand{\clG}{{\mathcal G}}
\newcommand{\clH}{{\mathcal H}}
\newcommand{\clI}{{\mathcal I}}

\newcommand{\clO}{{\mathcal O}}
\newcommand{\clP}{{\mathcal P}}

\newcommand{\clR}{{\mathcal R}}
\newcommand{\clS}{{\hbox{\scalebox{.8}[.8]{\Fontauri\slshape{S}}}}}

\newcommand{\clV}{{\mathcal V}}
\newcommand{\clW}{{\mathcal W}}

\newcommand{\clZ}{{\mathcal Z}}

\newcommand{\lvP}{{\mathbb P}}
\newcommand{\lvQ}{{\mathbb Q}}

\newcommand{\lvZ}{{\mathbb Z}}

\newcommand{\eum}{{\mathfrak m}}

\newcommand{\euv}{{\mathfrak v}}

\newcommand{\euw}{{\mathfrak w}}

\DeclareMathOperator{\Aut}{Aut}

\DeclareMathOperator{\chr}{char}

\DeclareMathOperator{\Hom}{Hom}

\DeclareMathOperator{\im}{im}
\DeclareMathOperator{\Krdim}{Kr.\hb1dim}

\DeclareMathOperator{\rk}{rk}

\DeclareMathOperator{\Spec}{Spec}

\DeclareMathOperator{\Val}{Val}

\DeclareRobustCommand\longtwoheadrightarrow
     {\relbar\joinrel\twoheadrightarrow}

\newcommand{\hla}{\hookleftarrow}
\newcommand{\hra}{\hookrightarrow}

\newcommand{\rsdp}{\rtimes}

\newcommand{\srjr}{\longtwoheadrightarrow}

\newcommand{\dwnr}[2]{{\ha{#1}\downarrow\hb2\lower-5pt\hbox{$\scriptstyle #2$}}}
\newcommand{\dwnl}[2]{{\lower-5pt\hbox{${\scriptstyle #2}$}\hb2\downarrow\ha{#1}}}
\newcommand{\dwn}[1]{\ha1\downarrow\rlap{$\hb2\vcenter{\hbox{$\scriptstyle{#1}$}}$}}
\newcommand{\Ddwn}[1]{\ha2\ddwn\ha4\rlap{$\hb2\vcenter{\hbox{$\scriptstyle{#1}$}}$}}

\newcommand{\hhb}[1]{\hbox to#1pt{}}
\newcommand{\hor}[1]{\smash
       {\mathop{{\lgrghtar}}\limits^{\lower2pt\hbox{$\scriptstyle #1$}}}}
\newcommand{\horsrjl}[1]{\smash
       {\mathop{{\longleftarrow}}\limits^{\lower2pt\hbox{$\scriptstyle #1$}}}}
\newcommand{\horsrjr}[1]{\smash
       {\mathop{{\lgsrjr}}\hb{23}\lower-8pt\hbox{$\scriptstyle#1$}}\ha9}    
\newcommand{\horr}[1]{\ha2\smash
      {\mathop{{\lglgrghtar}}\limits^{\lower2pt\hbox{$\scriptstyle #1$}}}\ha2}
\newcommand{\hrr}[1]{\ha2\smash
      {\mathop{\to}\limits^{\lower2pt\hbox{$\scriptstyle #1$}}}\ha2}
\newcommand{\mpst}[1]{\,\smash
       {\mathop{{\mapsto}}\limits^{\lower2pt\hbox{$\scriptstyle #1$}}}\,}
\newcommand{\mpstt}[1]{\,\smash
       {\mathop{{\longmapsto}}\limits^{\lower2pt\hbox{$\scriptstyle #1$}}}\,}

\newcommand{\lgrghtar}{{\ha2{\relbar\joinrel\rightarrow}\ha2}}
\newcommand{\lglgrghtar}{{\ha1{\relbar\joinrel\relbar\joinrel\rightarrow}\ha1}}
\newcommand{\lgsrjr}{{\ha1{\relbar\joinrel\srjr}\ha1}}

\newcommand{\nmnm}[1]{{\scalebox{.85}[1.05]{\sc #1}}}

\newcommand{\plim}[1]{\hbox to14pt{\rm
lim\kern-17pt\lower4.5pt\hbox{$\scriptstyle\longleftarrow$}%
\kern-11pt\lower6.5pt\hbox{$\scriptscriptstyle{#1}$}}\ha{3}}

\newcommand{\ilim}[1]{\hbox to14pt{\rm
lim\kern-14pt\lower4.5pt\hbox{$\scriptstyle\longrightarrow$}%
\kern-9pt\lower6.5pt\hbox{$\scriptscriptstyle{#1}$}}\ha{3}}

\newcommand{\hb}[1]{\hbox to-#1pt{}}
\newcommand{\ha}[1]{\hbox to#1pt{}}

\newcommand{\abc}{abelian-by-central}
\newcommand{\abcZ}{$\lvZ/\ell$ \abc}

\newcommand{\bgg}[1]{\lower.5pt\hbox{{\scalebox{1}[1.2]{$\!#1\!$}}}}
\newcommand{\bltt}{\scalebox{1.2}[1.2]{$\bullet$}} 

\newcommand{\clp}[2]{{#1\!\leqslant\!#2}}
\newcommand{\coprodmic}{{\lower-2pt\hbox{\scalebox{.7}[.6]{$\ha1\coprod\ha1$}}}}

\newcommand{\ddwn}{{\lower-1.25pt\hbox{$\downarrow$}\hb{6.55}\lower1.25pt\hbox{$\downarrow$}}^{\phantom;}_\nix}

\newcommand{\EE}{E}

\newcommand{\FF}{F}

\newcommand{\FFw}{{F\hb{.5}w}}

\newcommand{\Gal}{{G}}

\newcommand{\GGa}[1]{\clG^{\ha{.5}a}_{\!#1}}
\newcommand{\GGc}[1]{\clG^{\ha{.5}c}_{\!#1}}

\newcommand{\GGac}{\Delta}

\newcommand{\gx}{g}
\newcommand{\gy}{h}

\newcommand{\itm}[2]{{\begin{itemize}[leftmargin=#1pt]#2\end{itemize}}}

\newcommand{\ii}{{\alpha}}
\newcommand{\ia}{{\alpha}}

\newcommand{\jb}{{\beta}}
\newcommand{\jj}{{\beta}}

\newcommand{\LL}{L}
\newcommand{\Lm}{\Lambda}

\newcommand{\NN}{N}

\newcommand{\nix}{{\phantom|}}

\newcommand{\Ox}{{\scalebox{.8}[1]{$\hb1\Omega\hb{.5}$}}}
\newcommand{\Oy}{{\scalebox{.8}[1]{$\hb1\Lambda\hb1$}}}
\newcommand{\Om}{\Omega}
\newcommand{\oli}{\overline}

\newcommand{\pml}{\scalebox{.9}[1.2]{{\rm(}}}
\newcommand{\pmr}{\scalebox{.9}[1.2]{{\rm)}}}

\newcommand{\pr}{{pr}}

\newcommand{\qx}{\rho}

\newcommand{\Spi}[2]{\clS(\pi_{\XKm{#1}{#2}})}

\newcommand{\St}{{\rm St}}
\newcommand{\scl}[2]{{\scalebox{.#1}[.#1]{#2}}} 
\newcommand{\sclx}[3]{{\scalebox{.#1}[.#2]{\sc #3}}} 
 
\newcommand{\sep}{{^{\rm sep}}}

\newcommand{\sbgg}[1]{{\langle #1\rangle}}

\newcommand{\tl}{\tilde }

\newcommand{\tlk}{{\tilde k}}
\newcommand{\tlK}{{\tilde K}}

\newcommand{\tlsgm}{\sigma^c}
\newcommand{\tltau}{\tau^c}
\newcommand{\tlclS}{{\clS\hb2\lower-3.25pt\hbox{$\tilde{}$\ha2}}}
\newcommand{\tlv}{\tilde v}
\newcommand{\tlw}{\tilde w}
\newcommand{\tlwv}{\tilde w_{\tl\euv}}

\newcommand{\vid}{{\not\kern-3pt\lower-1pt\hbox{$\scriptstyle\bigcirc$}}}

\newcommand{\vrg}{\hbox{,}\,}

\newcommand{\vv}{v}

\newcommand{\vy}{\euv}
\newcommand{\vz}{\euw}
\newcommand{\vxy}{v}

\newcommand{\wh}[1]{{\widehat{#1}}}
\newcommand{\ww}{w}

\newcommand{\XK}[2]{{#1\lower.5pt\hbox{/}\lower1pt\hbox{$#2$}}}
\newcommand{\XKm}[2]{{#1\lower.5pt\hbox{\scl{75}/}\lower1pt\hbox{\scl{7}{$#2$}}}}

\newcommand{\ZZ}{{\clZ\hb{8}\clZ}}

\newtheorem{theorem}{Theorem}[section]
\newtheorem*{theorem*}{Theorem}

\newtheorem{keylemma}[theorem]{Key Lemma}
\newtheorem{lemma}[theorem]{Lemma}
\newtheorem{proposition}[theorem]{Proposition}

\theoremstyle{definition}

\newtheorem{construction}[theorem]{Construction}
\newtheorem{convention/notations}[theorem]{Convention/Notations}

\newtheorem{definition}[theorem]{Definition}
\newtheorem{definition/notations}[theorem]{Definition/Notations}
\newtheorem{definition/remark}[theorem]{Definition/Remark}
\newtheorem{definition/remarks}[theorem]{Definition/Remarks}
\newtheorem*{definition/remarks*}{Definition/Remarks}
\newtheorem{example}[theorem]{Example}

\newtheorem{example/fact}[theorem]{Example/Fact}
\newtheorem{fact}[theorem]{Fact}

\newtheorem{fact/definition}[theorem]{Fact/Definition}

\newtheorem{notation}[theorem]{Notation}
\newtheorem{notation/remark}[theorem]{Notations/Remark}
\newtheorem{notations/remark}[theorem]{Notations/Remark}
\newtheorem{notations/remarks}[theorem]{Notations/Remarks}

\newtheorem{remark/definition}[theorem]{Remark/Definition}
\newtheorem{remark/notation}[theorem]{Remark/Notation}
\newtheorem{remarks/examples}[theorem]{Remarks/Examples}
\newtheorem{remark}[theorem]{Remark}
\newtheorem{remarks}[theorem]{Remarks}
\begin{document}
\title[\textbf{Generalizations and minimalistic Refinements 
of the {\lowercase{$\bm t$}}-birational Section Conjecture}] 
{\textbf{Generalizations and minimalistic Refinements of the \\ 
{\lowercase{$\bm t$}}\ha1-\ha1birational Section Conjecture}}

\author{Florian Pop}

\address{Department of Mathematics, University of Pennsylvania
        \vskip0pt
        DRL, 209 S 33rd Street, Phila\-delphia, PA 19104, USA}
\email{pop@math.upenn.edu}
\urladdr{https://www.math.upenn.edu/\~{}pop/}

\begin{abstract} 
In this note we give generalizations and prove  
``minimalistic'' refinements  of the $t$-birational Section 
Conjecture ($t\hb1$-\ha1BSC\ha1), cf.~\cite{Be}, by doing 
both: First, by extending the class of base fields over 
which the $t$-\ha1BSC holds, and second, by proving 
refinements of the $t$-\ha1BSC which involve much less, 
that is {\it minimalistic,\/} Galois theoretical information.
\end{abstract}

\keywords{Rational points of varieties, function fields, 
valuations and prime divisors, Galois theory, anabelian 
geometry, ($\!\bm t$-birational\ha1) section conjecture.
\vskip2pt
{\it 2010 MSC.\/} Primary: 11Gxx, 11Uxx 14Dxx,  14Gxx}
\vskip4pt
\thanks{Supported by the NSF FRG DMS-2152304 and
Simons SFM 00021717.}
\date{Variant of April, 2027.} 

\maketitle

\section{Introduction/Motivation}
For reader's sake and to make the presentation (to some extent) self 
contained, we begin by introducing notations and recalling basics. 
\vskip5pt
\noindent
\begin{notations/remarks} 
\label{CND}
Throughout the paper, if not otherwise explicitly stated, we will use the 
following notations/definitions, see e.g.\ \nmnm{Stix}~\cite{St0} for 
basic facts and much more.
\vskip2pt
\itm{25}{
\item[-] $k\,$ is a field, $\oli k|k\,$ is a separable closure of $k.$
\vskip2pt
\item[-] $\ell\neq\chr(k)$ is some odd prime number, fixed throughout.
\vskip2pt
\item[-] $X\,$ is a complete geometrically integral normal $k$-curve.
\vskip2pt
\item[-] $\,K=k(X)$ is the function field of $X$, hence $K|k$ a regular field extension.
\vskip2pt
\item[-] $\oli X=X\!\times_k\oli k$ is the base change, thus $\oli X$ is normal integral. 
\vskip2pt
\item[-] $\oli\pi_1(X)\!:=\pi_1(\oli X)$ and $\,\oli\pi_1(K)\!:=\pi_1(K\oli k)$ 
are the geometric \'etale fundamental groups.
}
\noindent
Hence we get the canonical commutative diagram with exact rows
(geometric point $\oli\eta=\Spec\oli K$): 
\[
{\baselineskip=16pt
\begin{matrix}
\pi_{\XKm Kk}\!:&\ha{10}&1\ha3\to&\oli\pi_1(K)&
             \hor{\oli p_K}&\pi_1(K)&\hor{p_K}&\pi_1(k)&\to\ha31\cr
&&&\Ddwn{\oli \qx_X}&&\Ddwn{\qx_X}&&\hb{1.5}|\hb{1.5}|&\cr
\pi_{\XKm X k}\!:&\ha{10}&1\ha3\to&\oli\pi_1(X)&
       \hor{\oli p_X}& \pi_1(X)&\hor{p_X}& \pi_1(k)&\to\ha31\cr
\end{matrix}
}
\]
Denote by $\Spi Xk$ the set of $\oli\pi_1(X)\hb1$-\ha1conjugacy 
classes of sections $s_X:\pi_1(k)\to\pi_1(X)$ of $p_X$, and 
by $\Spi Kk$ the set of $\oli\pi_1(K)$-\ha1conjugacy 
classes of sections $s_K:\pi_1(k)\to\pi_1(K)$ of~$p_K.$  
\vskip4pt
Since $X$ is a complete normal $k$-curve, the points 
$x\in X$ are in bijection with the $k$-valuation rings 
$\clO_v\in\Val_k(K)$ of the $k$-valuations of $K$ via 
$\clO_x=\clO_v$, thus in particular, equal residue fields 
$\kappa(x)\!:=\clO_x/\eum_x=\clO_v/\eum_v=:\!Kv.$ 
Further, for $x\leftrightarrow v_x$ one has:
$x\in X$ is closed iff $v_x$ is non-trivial iff the residue fields 
$\kappa(x)=Kv_x$ are finite over $k$. And $x\in X$ is 
$k$-rational iff $\kappa(x)=k$ iff $v_x$ is a $k$-rational 
valuation, i.e., $Kv_x=k$, thus getting a canonical bijection
\[
X(k)\to\Val_{k\scl8{-rat}}(K), \ x\mapsto \clO_x=\clO_{v_x}\mapsto v_x.
\]

By functoriality of the \'etale fundamental group, every 
$k$-rational point $x\in X(k)$ gives rise naturally to some $s_x\in\Spi Xk$. 
Further, for $v\in\Val_k(K)$, the prolongations $\oli v|v$ of $v$
to $K^\sep|K$ and~their inertia/decomposition groups 
$T_v\!:=T_{\oli v|v}\triangleleft Z_{\oli v|v}=:Z_v\!<\!\pi_1(K)$ are 
$\oli\pi_1(K)$-conjugated. And for $x\in X(k)$, thus
$v_x\in\Val_{k\scl8{-rat}}(K)$, one has the 
canonical split exact~sequence:
\[
\ha9(\pi_{v_x}):\ha{130}1\to T_{v_x}\hor{\oli p_K} Z_{v_x}
\hor{p_K}\pi_1(k)\to1.\ha{150}
\]

Conclude that the set of conjugacy classes of $s_{v_x}\in \Spi Kk$ 
defined by $x\in X(k)$ is in bijection with the conjugacy classes 
of splittings of $(\pi_{v_x})$ above, hence with 
$\clH_x\!:=H^1_{\rm cont}\pml \pi_1(k),T_{v_x}\pmr$,
the cohomology pointed set of $\pi_1(k)$ with values in $T_{v_x}$. In 
particular, if $\chr(k)=0$, one has that $T_{v_x}=\wh{\lvZ}(1)$, thus via Kummer 
Theory, one has $H^1_{\rm cont}\pml \pi_1(k),T_{v_x}\pmr=\widehat{k^\times}$.
\vskip5pt
Finally, let $\pi_1(k)\neq1$, i.e., $k$ is not separably closed, and
$T=X\vrg T.$ For every $x\in X(k)$ let $s_x\in\Spi Tk$ as above be
fixed, thus getting a map $\varphi_T:X(k)\to\Spi Tk$, $x\mapsto s_x.$ 
One has:
\vskip2pt
\itm{25}{
\item[a)] The map $\varphi_X:X(k)\to\Spi Xk$, $x\mapsto s_x$ is not necessarily 
injective. Indeed, if $X=\lvP^1_k$, then $p_X:\pi_1(X)\to\pi_1(k)$ is 
an isomorphism, hence $\Spi Xk=\{s_X\}$ consists of a single element,
the inverse of $p_X.$ On the other hand, $|X(k)|=|k|+1>1=|\Spi Xk|.$
\vskip2pt
\item[b)] The map  $\varphi_K:X(k)\to\Spi Kk$, $x\mapsto s_x$ is injective. 
Indeed, let $Z_{v_x}$ be a decomposition group above $v_x$ with
$\im(s_x)\subset Z_{v_x}.$ Then $x,x'\in X(k)$ and $\im(s_x)\cap \im(s_{x'})\neq1$ 
implies $Z_{v_x}\cap Z_{v_{x'}}\neq1.$ Since $v_x$, $v_{x'}$ are discrete, 
$Z_{v_x}\!\cap Z_{v_{x'}}\!\neq\!1$ implies\ha1\footnote{See 
e.g.~\cite{P0}, (1.9) Satz, for the general facts about this.}  
$v_{x'}\!=v_{x}$, thus $x'\!=x.$
}
\end{notations/remarks}
Next let $k_t\!:=k(t)$ be the rational function field. For the 
base changes $X_t\!:=X\times_k k_t$ and $K_t\!:=K(t)=k_t(X_t)$, 
consider the commutative diagram below in which $\oli p_t$, 
$\oli p$ and $\oli p_X$ are the canonical inclusions and all the 
other morphisms are the corresponding canonical projections. 
\[
{\baselineskip=16pt
\begin{matrix}
\pi_{\XKm {K_t}{k_t}}\!:&\ha{10}&1\ha3\to&\oli\pi_1(K_t)&
             \hor{\oli p_{t}}&\pi_1(K_t)&\hor{p_{t}}&\pi_1(k_t)&\to\ha31\cr
&&&\Ddwn{\oli q_{K}}&&\Ddwn{q_{K}}&&\Ddwn{q_{k}}&\cr
\pi_{\XKm{K}{k}}\!:&\ha{10}&1\ha3\to&\oli\pi_1(K)&
       \hor{\oli p_K}& \pi_1(K)&\hor{p_K}& \pi_1(k)&\to\ha31\cr
&&&\Ddwn{\oli \qx_X}&&\Ddwn{\qx_X}&&\hb{1.5}|\hb{1.5}|&\cr
\pi_{\XKm X k}\!:&\ha{10}&1\ha3\to&\oli\pi_1(X)&
       \hor{\oli p_X}& \pi_1(X)&\hor{p_X}& \pi_1(k)&\to\ha31\cr
       \end{matrix}
}
\]
%
%
%
\begin{definition/remark} 
\label{CND2}
In the above context, consider definitions as follows:
\vskip0pt
\itm{25}{
\item[1)] A section $s_X\in\Spi Xk$ is {\it birationally liftable\/}
(b.l.), if there is $s_K\in\Spi Kk$ such that $s_X=\qx_X\circ s_K.$ 
If so, $s_K$ is a birational lift of $s_X.$ Obviously, given a 
section $s_K\in\Spi Kk$, one has that $\,s_X\!:=\qx_X\circ s_K$ 
lies in $\,\Spi X k$ and $s_K$ is a birational lift of $s_X.$ 
\vskip2pt
\item[2)] Let $q_X:\pi_1(K_t)\to\pi_1(X)\!$, \ha1i.e., $q_X\!:=\rho_X\circ q_K\!$, 
be the canonical projection. For $T=X$,$\,K$ and  
$q_T:\pi_1(K_t)\to\pi_1(T)$, we say that a section $s_T\in\Spi Tk$ 
of $p_T:\pi_1(T)\to\pi_1(k)$ is $\bm t$-{\it birationally liftable\/}
($\bm t$-\ha1b.l.\ha1), if there is $s_t\in\Spi{K_t}{k_t}$ such that 
$q_T\circ s_t=s_T\circ q_t.$ 
\vskip2pt
\item[$\bullet$] Let $s_X\in\Spi Xk$ be $\bm t$-\ha1birationally liftable. 
It is relatively easy to show that $s_X$ lifts to some $s_K\in\Spi Kk.$
Moreover, in the classical case, i.e., $k|{\lvQ}$ f.g., Bresciani~\cite{Be}, 
Lemma~12, shows that $s_X\in\Spi Xk$ has a lift $s_K\in\Spi Kk$ 
which is itself $\bm t$-\ha1b.l. But in the generality we work in 
{\it is not clear whether\/} $s_X$ has a lift $s_K\in\Spi Kk$ which 
is $\bm t$-\ha1b.l.
\vskip2pt
\item[$\hb{10}\bullet$] To compensate, we say that $s^a_X$ is 
{\it strongly $\bm t$-\ha1b.l.,\/} if $s^a_X$ has a lift $s^a_K$ which 
is $\bm t$-\ha1b.l.
}
\vskip2pt
\itm{35}{
\item[{\bf Note}:]  Following \cite{St0}, Def.\ha{2}27, there is a canonical
map $\Spi Xk\to\Spi{X_t}{k_t}$, compatible with $X(k)\hra X_t(k_t)\hra\Spi {K_t}{k_t}$
defined by $x\mapsto x_t\!=x\times_k k_t$ followed by $x_t\mapsto s_{x_t}.$  
}  
\end{definition/remark}
\noindent
The section conjecture (SC) originates from \nmnm{Grothendieck}
\cite{G1}, \cite{G2}, see~\cite{GGA}, and asserts:
\vskip5pt
\noindent
{\bf Grothendieck SC.} {\it Let $k|\lvQ$ be finitely generated 
and $X$  be a proper hyperbolic $k$-curve. Then all 
{\rm $s_X\in\Spi Xk$} arise form $x\in X(k)$ as described above, 
and {\rm $\varphi_X\!:X(k)\to\Spi Xk$} is a bijection.\/}
\vskip5pt
There are several variants of section conjectures as follows.
The {\bf birational section conjecture (BSC)} asserts that 
in the context of SC above and $K=k(X)$ the function field 
of $X$, all sections $s_K\in\Spi Kk$ arise from $k$-rational 
valuations $v$ of $K|k$, thus from $k$-rational points $x\in X(k)$ 
as explained above. The {\bf $p$-adic SC} and {\bf $p$-adic BSC}
are obtained by replacing the f.g.\ field $k|\lvQ$ by a $p$-adic 
field $k$, i.e.,\ by a finite field extension $k|\lvQ_p$. Finally, 
in the context of Grothendieck~SC, the {\bf $\bm t$-BSC} asserts 
that any section $s_X\in\Spi Xk$ which is $t$-birationally~liftable 
originates from some $k$-rational point $x\in X(k)$ as 
explained above. 
\vskip5pt
Concerning results, conditional/weaker forms of the SC are 
part of the {\it local theory\/} in anabelian geometry by 
\nmnm{Nakamura}~\cite{Na}, \nmnm{Tamagawa}~\cite{Ta}, 
\nmnm{Mochizuki}~\cite{Mz1}, see e.g. the survey articles
\nmnm{Faltings}~\cite{Fa}, \nmnm{Szamuely}~\cite{Sz},
with a significant result by~\nmnm{Stix}~\cite{St2} concerning 
the BSC for geometrically integral hyperbolic curves over 
totally real number fields. This being said, one can say that 
SC and BSC are wide open, and there are only a few complete 
unconditional results concerning forms of the BSC, precisely: 
The $p$-adic BSC is known, see~\nmnm{Koenigsmann}~\cite{Ko1},
for curves and \nmnm{Stix}~\cite{St1} for higher 
dimensional varieties (see also \cite{P0}, Theorems E10, E12). 
The BSC is known for the generic curve $C_g$ over 
$\kappa(M_g)$ by \nmnm{Hain}~[Ha], and second, very 
recently, the $t$-BSC was proved {\it over all $k|\lvQ$ finitely 
generated\/} by \nmnm{Bresciani}~\cite{Be}.
To complete this short list of results, recall that the $p$-adic 
BSC for curves (for all $p$) and higher dimensional varieties 
(for $p>2$) holds under Galois ``minimalistic'' hypotheses. 
For instance, if the $p$-adic field $k$ contains the $p^{\rm th}$ 
roots of unity, then the $\lvZ\!/\!p\,$-{\it metabelian Galois 
theory\/} encodes already the rational points of proper smooth 
$k$-varieties. See~\nmnm{Pop}~\cite{P1}, \cite{P2} and 
\nmnm{L\"udtke} \cite{Lu} for details and further more general facts. 
\vskip5pt
The aim of this note is to both {\it extend\/} the $t$-BSC in its 
initial form and define/introduce and prove ``{\it Galois-minimalistic\/}'' 
type results for the $t$-BSC over quite general~base fields $k$,
thus giving wide generalizations of the $t$-BSC beyond $k|\lvQ$
finitely generated.
\vskip5pt
\noindent
\bltt\ First, recall that given a prime number $\ell\!$, we say 
that a field $k$ {\it is not $\ell\!$-\ha1closed\/} if the degree 
$[\oli k\!:\!k]$ is divisible by $\ell$, or equivalently, the 
$\ell\!$-\ha1Syllow groups of $\pi_1(k)$ are non-trivial.\footnote{\ha2Note 
that being not $\ell\!$-closed is a much weaker hypothesis than 
$G_k$ having non-trivial finite $\ell\!$-quotients.} 
This being said, an application/consequence of the methods 
developed in this note is the following.
%
%
%
\begin{theorem}[{\bf Generalized $t$-BSC}]
\label{gentbsc}
Let $\,k$ be a not $\ell\hb1$-closed perfect field, where 
$\ell\neq\chr(k)$ is odd, and $X$ be a complete integral 
normal $k$-curve with function field $K=k(X)$. 
Then one has:
\vskip2pt
\itm{25}{
\item[{\rm1)}] Every $t$-b.l.\ha3section  
$\,s_K\in\Spi Kk$ arises from a \textbf{unique} 
$\,x_{s_K}\in X(k)$ as explained above.
\vskip2pt
\item[{\rm2)}] Every strongly $t$-b.l.\ha3section $\,s_X\in\Spi Xk$ 
arises from some (possibly not unique) $k$-rational point 
$x_{s_X}\in X(k)$ 
as explained above.
}
\end{theorem}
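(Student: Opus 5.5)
The plan follows Bresciani's strategy for the $t$-BSC, with his use of $k|\lvQ$ finitely generated replaced by a Galois-theoretic detection of valuations. That detection will use \abcZ\ (i.e.\ $\lvZ/\ell$ metabelian) information in the style of \cite{P1}, \cite{P2}. Let $s_t\in\Spi{K_t}{k_t}$ be a $t$-birational lift of $s_K$, so $q_K\circ s_t=s_K\circ q_t$. Since $k$ is not $\ell$-closed, we may replace $k$ by the fixed field of a pro-$\ell$ Sylow subgroup of $\pi_1(k)$, which is a separable algebraic extension; $s_K$ and $s_t$ restrict to sections over it. We then work over a field $k$ with $\pi_1(k)$ a non-trivial pro-$\ell$ group, and hence $\mu_\ell\subset k$ because $\ell$ is odd and $[k(\mu_\ell):k]$ divides $\ell-1$. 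Once a rational point has been found over this extension, we descend it at the end.

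\emph{Step 1: produce valuations.} The key input is a criterion for the image of a section of $\pi_1(K_t)\to\pi_1(k_t)$. It should be of the form: if $\im(s_t)$ contains (the image in the \abcZ\ quotient of) a non-trivial abelian-by-central subgroup of a suitable shape, then $\im(s_t)$ is contained in a decomposition group $Z_w$ of some valuation $w$ of $K_t$ which is non-trivial on $K_t$ and trivial on $k$. The extra variable $t$ is what provides this. The section $s_t$ restricts, over the decomposition groups in $\pi_1(k_t)$ of the $k$-rational places $t\mapsto a$ ($a\in k\cup\{\infty\}$), to sections of the corresponding decomposition/inertia sequences. Kummer theory with $\mu_\ell\subset k$ and $\pi_1(k)\neq1$ then yields non-trivial commuting elements (an inertia element $\tau$ together with Frobenius-like lifts) inside $\im(s_t)$. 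This is exactly the configuration which, by the local theory \cite{P0} (1.9) and the \abcZ\ detection of valuations, forces $\im(s_t)\subset Z_w$. I expect this to be the main obstacle. It requires the valuation-detection theorem to hold over an arbitrary perfect not $\ell$-closed base, without arithmetic input such as Hilbertianity or a local-global principle. I would first try the ``commuting-liftable pairs'' criterion for $\lvZ/\ell$ \abc\ Galois groups and check that its hypotheses depend only on $\mu_\ell\subset k$.

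\emph{Step 2: restrict to $K$.} We analyse $w|_K$. Since $w$ is trivial on $k$ and $K|k$ has transcendence degree one, either $w|_K$ is trivial, or $w|_K=v$ is a $k$-valuation with residue field finite over $k$. If $w|_K$ were trivial, $w$ would be essentially a $K$-valuation of $K(t)$. The compatibility $q_K\circ s_t=s_K\circ q_t$ together with the symmetric roles of $t$ and $K$ then lets us exchange them: we run Step 1 using a transcendental element $u\in K$ in place of $t$ and obtain a valuation non-trivial on $K$. In the other case, $s_K=q_K\circ s_t\circ(\text{section of }q_t)$ has image in $Z_v$, the image of $Z_w$ in $\pi_1(K)$. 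Because $s_K$ is a section, $p_K(Z_v)=\pi_1(k)$, which forces $Kv$ to have no proper separable subextension trivialised by $\pi_1(k)$; hence $Kv=k$ since $k$ is perfect. Thus $v=v_x$ for some $x\in X(k)$, and $s_K$ is a splitting of $(\pi_{v_x})$, i.e.\ $s_K$ arises from $x$. Over the original field $k$, the same argument applied to the full section shows $Kv$ is purely inseparable over $k$, hence $Kv=k$.

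\emph{Step 3: uniqueness and part 2).} Uniqueness of $x_{s_K}$ is the argument of Notations/Remarks~\ref{CND}~b). If $\im(s_K)\subset Z_{v_x}\cap Z_{v_{x'}}$, then since $\im(s_K)\cong\pi_1(k)\ne1$ this intersection is non-trivial. Because these are discrete valuations, \cite{P0} (1.9) gives $v_x=v_{x'}$. For 2), let $s_K$ be a $t$-b.l.\ lift of the strongly $t$-b.l.\ section $s_X$. Part 1) gives $x\in X(k)$ with $s_K$ arising from $x$, so $s_X=\qx_X\circ s_K$ arises from $x$. Uniqueness can fail here, as Notations/Remarks~\ref{CND}~a) shows, because different lifts may yield different points.
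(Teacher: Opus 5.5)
\textbf{Main gap: Step~1 as formulated.} Your Step~1 asks for more than the mechanism you describe can give, and in fact for more than is known. The commuting-liftable configuration you exhibit comes from $s_t(T_{\tilde{\mathfrak v}})\leqslant s_t(Z_{\tilde{\mathfrak v}})$ for a $k$-place $\tilde{\mathfrak v}$ of $k_t$. Since $p_t\circ s_t=\mathrm{id}$, the inertia of any valuation $w$ detected from it has non-trivial image under $p_t$, because it meets $T_{\tilde{\mathfrak v}}$ non-trivially. So $w|_{k_t}$ is non-trivial; indeed it refines $\tilde{\mathfrak v}$, cf.\ Proposition~\ref{fktTopaz1},~1). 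Hence $p_t(Z_w)\subseteq Z_{w|_{k_t}}\subsetneq\pi_1(k_t)$, and $\im(s_t)\not\subseteq Z_w$.

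What you actually obtain is only $s_t(Z_{\tilde{\mathfrak v}})\subseteq Z_w$, one place at a time. This is what Section~3 proves at the a.b.c.\ level, via Topaz's theorem and the canonical $s^\star$-valuations. Conversely, a $w$ trivial on $k$ with $\im(s_t)\subseteq Z_w$ would have to be trivial on $k_t$, because $k_t$ is not Henselian. It would then be a closed point of $X_t$. So your Step~1 amounts to the existence half of the BSC for $K_t|k_t$, which is open (cf.\ question~5 in the last section).

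\textbf{Consequence for Step~2.} With the correct, local form of Step~1, the case you set aside in Step~2 becomes the heart of the matter. Locally nothing rules out that $w$ is the horizontal valuation $v_{\tilde K,p}$ of $\tilde K(t)$. This valuation is trivial on $K$, restricts to $v_p$, and has decomposition group mapping onto that of $v_p$. Exchanging the roles of $t$ and some $u\in K$ is not possible: the hypothesis supplies a section over $k(t)$ and nothing over $k(u)$.

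\textbf{The missing global idea.} What you need is the argument of Lemma~\ref{non-trivial}. By Fact~\ref{fact3}, the inertia generators at the places in $\Sigma'$ (that is, $\infty$ and the places of degree prime to $\ell$) generate $G'$ subject only to $\prod\tau_{\tilde{\mathfrak v}}=1$. The same holds over $\tilde K_t$ with $\Sigma'_K$. Suppose every $\tilde{\mathfrak v}\in\Sigma'$ gave a horizontal valuation. Then the elements $\tilde s^a_t(\tau_{\tilde{\mathfrak v}})$ would be inertia generators at a proper subset of $\Sigma'_K$ with trivial product, a contradiction. This yields some $\tilde{\mathfrak v}\in\Sigma'$ with $w_K$ non-trivial.

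\textbf{Handling a non-rational place.} That place need not be $k$-rational, so you only learn that $s_K$ maps an open subgroup of $\pi_1(k)$ into a decomposition group over $w_K$. Closing this requires Key Lemma~\ref{key-lemma-1}:
\begin{itemize}
\item Distinct $k$-valuations of $K$ are independent.
\item So by Lemma~\ref{indepval}, two decomposition groups sharing an open subgroup of $\im(s_K)$ coincide; such a subgroup is non-trivial because $\pi_1(k)$ is infinite.
\item Conjugating by $s_K(\sigma)$ then shows the valuation is fixed by all of $\im(s_K)$.
\item Hence $\kappa(x)$ is linearly disjoint from $\bar k$, and $x\in X(k)$ because $k$ is perfect.
\end{itemize}

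\textbf{Your Sylow reduction cannot replace this.} A point over the fixed field of a pro-$\ell$ Sylow subgroup $P$ does not descend to $k$ without such a conjugation argument, and $P\cap P^\sigma$ may be trivial. Re-running the argument ``over the original $k$'' would require Step~1 without $\mu_\ell\subset k$. The paper avoids this by working throughout over $\tilde k=\bar k\supseteq\mu_\ell$.

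\textbf{From a.b.c.\ quotients to $\pi_1(K)$.} Since the detection takes place in $\lvZ/\ell$ a.b.c.\ quotients, reaching the full $\pi_1(K)$ needs the paper's limit argument. This runs over the finite subextensions $K_\alpha$ of the fixed field of $\im(s_K)$, using uniqueness at each level.

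Your Step~3 and part~2) are fine.
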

The above Theorem~\ref{gentbsc} is a relatively easy consequence
of Theorem~\ref{MThm1} below, which is a 
{\it geometrically\ha1-$\lvZ/\ell\hb1$-\ha1minimalistic\/} version of 
the above Theorem~\ref{gentbsc}. First, let us introduce notations 
as follows. For any profinite group $G$, let $G^c=G/G^{(c)}$ and 
$G^a=G/G^{(a)}$ be the $\lvZ/\ell$-abelian-by-central, respectively 
$\lvZ/\ell$-abelian quotients of $G$ as profinite groups. 
Since $G^{(c)}$ and $G^{(a)}$ are characteristic subgroups of $G$, 
every $\Phi\in\Aut(G)$ defines canonically $\Phi^\star\in\Aut(G^\star)$ 
for $\star=a$,$c.$ An important special case of the situation above is 
as follows:
\vskip2pt
Let $T$ be a geometrically irreducible $k$-scheme and 
$1\to\oli\pi_1(T)\to \pi_1(T)\to\pi_1(k)\to1$ be the resulting 
homotopy exact sequence, where $\oli T:=T\times_k\oli k$
and $\oli\pi_1(T)\!:=\pi_1(\oli T)$ is the geomet-
\vskip-2pt\noindent
ric part of $\pi_1(T).$ For $\star=a$,$\,c$
one has: $\oli\pi_1^{(\star)}\!\triangleleft\,\oli\pi_1(T)$ is characteristic 
in $\oli\pi_1(T)\triangleleft\pi_1(T)\!$,~thus
\vskip-2pt\noindent
$\oli\pi_1^{(\star)}\triangleleft\pi_1(T).$ Hence 
setting $\oli\pi_1^\star(T)=\oli\pi_1(T)/\oli\pi_1^{(\star)}(T)\!$ and 
$\Pi^\star(T)\!:=\pi_1(T)/\oli\pi_1^{(\star)}(T)\!$, one gets a canonical
commutative diagram:
\[
\begin{matrix}
1\to&\oli \pi_1(T)&\hor{}&\pi_1(T)&\hor{p_T}&\pi_1(k)&\to1\cr
&\hb3\ddwn{}&&\ddwn{}&&|\hb2|&\cr
1\to&\oli\pi^c_1(T)&\hor{}&\Pi^c(T)&\hor{p^c_T}&\pi_1(k)&\to1\cr
&\Ddwn{\oli{pr}_{T}}&&\Ddwn{pr_T}&&|\hb2|&\cr
1\to&\oli\pi^a_1(T)&\hor{}&\Pi^a(T)&\hor{p^a_T}&\pi_1(k)&\to1\cr
\end{matrix}
\]
\vskip3pt
The process $T\leadsto\Pi^\star(T)\to\pi_1(k)$ with $\star=a$,$\,c$ 
is functorial in $T$ for geometrically irreducible separated $k$-schemes. 
Hence in the notation introduced right before Definition~\ref{CND2}
above, one has the commutative diagrams below, 
the middle diagram being a quotient of the first one, and the right hand
one being a quotient of the middle one:
\vskip-15pt
\usetikzlibrary{decorations.pathmorphing}
\[
\mathsurround=0pt
\begin{tikzcd}[column sep=scriptsize] 
\pi_1(K_t) \arrow[r, two heads, "p_t" '] \arrow[d, two heads, "q_K"]
  &\pi_1(k_t) \arrow[d, two heads, "q_k"] \arrow[l, dashed, bend right=10, "s_t" ']                       
    &\Pi^c(K_t) \arrow[r, two heads, "p^c_t" ']\arrow[d, two heads, "q_K^c"] 
      &\Pi^c(k_t) \arrow[d, two heads, "q_k^c"] 
                \arrow[l, dashed, bend right=10, "s^c_t" ']
&\Pi^c(K_t) \arrow[r, two heads, "p^c_t" ']\arrow[d, two heads, "q_K^a"] 
     &\Pi^c(k_t) \arrow[d, two heads, "q_k^a"] 
          \arrow[l, dashed, bend right=10, "s^c_t"']
                          \\
\pi_1(K) \arrow[r, two heads, "p_K" '] \arrow[d, two heads, "\qx_X"]
    &\pi_1(k) \arrow[l, dashed, bend right=10, "s_K" '] \arrow[d, "{\rm id}"] 
                      \arrow[r, squiggly] 
       &\Pi^c(K) \arrow[r, two heads, "p_K^c" '] \arrow[d, two heads, "\qx^c_X"]
         &\pi_1(k) \arrow[l, dashed, bend right=10, "s^c_K"'] \arrow[d, "{\rm id}"]             
                     \arrow[r, squiggly ] 
&\Pi^a(K) \arrow[r, two heads, "p_K^a" '] 
        \arrow[d, two heads, "\qx^a_X"]
       &\pi_1(k) \arrow[l, dashed, bend right=10, 
           "s^a_K"'] \arrow[d, "{\rm id}"]             
                        \\
\pi_1(X) \arrow[r, two heads, "p_X" ']
   &\pi_1(k) \arrow[l, dashed, bend right=10, "s_X" ']              
        &\Pi^c(X) \arrow[r, two heads, "p_X^c" ']
            &\pi_1(k) \arrow[l, dashed, bend right=10, "s_X^a"']                 
&\Pi^a(X) \arrow[r, two heads, "p_X^a" ']&\pi_1(k)
         \arrow[l, dashed, bend right=10, "s_X^a" '] 
          \\
\end{tikzcd}
\]
\vskip-20pt
\noindent
\bltt\ In the sequel, if not otherwise explicitly stated, we denote 
$\star=a\hb1$,$c$ and $T=X$,$\,K$ and consider the 
canonical projection $\,q^a_T:\Pi^c(K_t)\to\Pi^a(T)$ from the 
diagrams above.
%
%
%
\begin{remarks}
\label{CND3}
In the above notation and context, we notice/have the following:
\vskip2pt
\itm{25}{
\item[1)] Every section $s_T\in\Spi Tk$ gives rise canonically to 
sections $s^\star_T:\pi_1(k)\to\Pi^\star(T)$ of $p^\star_T$ such 
that $s_T$ is a lift of $s^c_T$ and $s^c_T$ is a lift of $s^a_T$, 
thus $s_T$ is a lift of $s^a_T$ as well.
\vskip2pt
\item[2)] Every section $s_t\in\Spi {K_t}{k_t}$ gives rise canonically 
to sections $s^\star_t:\Pi^\star(k_t)\to\Pi^\star(K_t)$ of $p^\star_t$
such that $s_t$ is a lifting of $s^c_t$ and $s^c_t$ is a lifting of 
$\,s^a_t$, thus $s_t$ is a lift of $s^a_t$ as well.
\vskip2pt
\item[3)] For $s_T\in\Spi Tk\!$ and $s_T^\star$ defined by $s_T$
as at item~1) above, one has: If $s_t\in\Spi{K_t}{k_t}$ lifts $s_T\in\Spi Tk\!$, 
then $\,s_t$ gives rise canonically to a section $\,s_t^\star$ of $\,p_t^\star$ 
which lifts~$s_T^\star.$
}
\end{remarks}
%
%
%
\begin{definition} 
We say that a section $s^a_T:\Pi^a(k)\to\Pi^a(T)$ of 
$p^a_T:\Pi^a(T)\to\pi_1(k)$ is {\it $t\!$-a.b.c.\ birationally liftable,\/} 
if there is a section $\, s^c_t$ of $\,p^c_t$ lifting $s^a_T$, i.e., 
$s^a_T\circ q^a_k=q^a_T\circ s_t^c.$ If so, we also say that 
$s^c_t$ is a {\it $t\!$-a.b.c.\ birational lift\/} of $s^a_T.$
\end{definition}
%
%
%
\begin{remarks}
\label{CND4}  
In the above notation and context, we notice/have the following:
\vskip2pt
\itm{25}{
\item[1)] Let $s^c_t$ be a $t\!$-a.b.c.\ birational lift of a given 
$s^a_K\in\Spi Kk.$ Then $s^c_t$ is a $t\!$-a.b.c.\ birational lift of 
$s^a_X\!:=\qx^a_K\circ s^a_K$, thus $s^a_X$ is $t\!$-a.b.c.\ 
birationally\ha2liftable.
\vskip2pt
\item[2)] If $s^a_X$ is $t\!$-a.b.c.\ birationally liftable, then $s^a_X$
lifts to some sections $s^c_K$ of $p^c_K$, thus $s^a_K$ of $p^a_K.$  
But it is {\it unclear whether\/} $s^a_X$ lifts to some $s^a_K$ which 
is $t\!$-a.b.c.\ birationally liftable.
\vskip2pt
\item[$\hb{10}\bullet$] To compensate, we say that a section $s^a_X$ of
$p^a:\Pi^a(X)\to\pi_1(k)$ is {\it strongly $t\!$-a.b.c.\ birationally liftable,\/} 
if $s^a_X$ lifts to some $s^a_K$ which is itself $t\!$-a.b.c.\ birationally 
liftable.
}
\end{remarks}
\noindent
This been said, the above Theorem~\ref{gentbsc} 
is a consequence of the following deeper fact.
%
%
%
\begin{theorem}[{\bf\hb{3.25}(\ha1Minimalistic $t$-BSC\ha1}]
\label{MThm1}
Let $\,k$ be a non-$\ell\hb1$-closed perfect field with $\ell\neq\chr(k)$ 
odd, and $X$ be a complete integral normal $k$-curve with function 
field $K=k(X)$. Then one has:
\vskip2pt
\itm{25}{
\item[{\rm1)}] Every $t$-a.b.c.\ birationally liftable section 
$s^a_K\!:\pi_1(k)\to \Pi^a(K)$ of $\,p^a_K\!:\Pi^a(K)\to\pi_1(k)$ is defined 
by a \textit{\textbf{unique}} $k$-rational point $\,x_{s_K}\!\in X(k)$ as 
explained above.
\vskip2pt
\item[{\rm2)}] Every strongly $t$-\ha1a.b.c.\ birationally liftable 
section $\,s^a_X:\pi_1(k)\to\Pi^a(X)$ of $p^a_X$ is defined by 
some $k$-rational point $s_{s_X}\!\in X(k)$\ha2---\ha1maybe not 
unique\ha1---\ha3in the way explained above.
}
\end{theorem}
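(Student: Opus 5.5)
We plan to reduce the statement to a ``local'' assertion over the field $k_t=k(t)$: a section $s^c_t$ of $p^c_t:\Pi^c(K_t)\to\Pi^c(k_t)$ must come, at the level of $\Pi^a$, from a $k_t$-rational valuation of $K_t|k_t$, and such a valuation must in turn be induced by a $k$-rational point of $X$. The key tool is the $\lvZ/\ell$-abelian-by-central detection of valuations (in the spirit of \cite{P1}, \cite{P2}, \cite{Lu}, and the commuting-liftable-pairs theory). If a subgroup of $\Pi^a$ has the property that all pairs of its elements have commuting lifts to $\Pi^c$, then it is contained in a decomposition-like subgroup $Z^a_w$ of some valuation $w$ whose inertia $T^a_w$ is ``large''. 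Concretely, we first use that $k$ is not $\ell$-closed: we choose $\sigma\in\pi_1(k)$ of $\ell$-power order, or a nontrivial pro-$\ell$ Sylow element, acting on $\mu_{\ell}$ appropriately. After passing to the fixed field of a pro-$\ell$ Sylow subgroup $k'|k$ (which is harmless for rationality questions by a norm/corestriction argument, since $[k':k]$ is prime to $\ell$), we may assume $\pi_1(k)$ is pro-$\ell$ and nontrivial, and $\mu_\ell\subset k$.

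Step 1: consider the image $\Sigma:=s^c_t(\Pi^c(k_t))\subset\Pi^c(K_t)$. Inside $\Pi^c(k_t)$ we have the inertia groups $T^c_{a}$ of the $k$-rational places $t\mapsto a$ of $k_t$, each isomorphic to $\lvZ/\ell$ over its image, and these are ``centralized'' by decomposition subgroups $Z^c_a\cong T^c_a\rtimes\pi_1(k)$. Transporting by $s^c_t$, we obtain pairs $(\tau,\sigma)$ in $\Pi^c(K_t)$ whose images in $\Pi^a(K_t)$ generate a noncyclic group with commuting lifts. We then apply the commuting-liftable-pairs lemma to get a valuation $w_a$ of $K_t$ with $\tau$ in the inertia and $\sigma$ in the decomposition group mod $\ell$. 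Since $\tau$ restricts to the inertia of $t\mapsto a$, the restriction of $w_a$ to $k_t$ is the $(t-a)$-adic valuation. Here we use that $\ell$ is odd, so that the relevant cyclotomic character argument rules out the degenerate (orderings-type) case.

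Step 2: this is the main obstacle. We must show that $w_a$ has residue field $K_tw_a=K$ rather than a larger extension, i.e.\ that $w_a$ is the constant prolongation of $t-a$, or else a rank-two composite whose second component is a $k$-rational valuation $v$ of $K$. The plan is to let $a$ vary over infinitely many $a\in k$ (here $k$ is infinite, since it is perfect and not $\ell$-closed, or we pass to an $\ell$-extension). Using Remarks~\ref{CND3} and the compatibility $q^a_K\circ s^c_t=s^a_K\circ q^a_k$, we look at the residue-level section induced on $\Pi^a(K_tw_a)$. If $w_a|_K$ were trivial for all $a$, the image of $s^a_K$ would generate, together with the transported $\tau$'s, a too large abelian-by-central commutative configuration. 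This contradicts the nondegenerate cup-product pairing in $H^2(K_t,\lvZ/\ell)$ (the Merkurjev--Suslin/Bogomolov--Tate type computation for function fields of curves). Hence $w_a$ induces a nontrivial $k$-valuation $v$ of $K$ with $s^a_K(\pi_1(k))\subset Z^a_v$ and with a splitting over the residue field. Perfectness and $\pi_1(k)\neq1$ then force $Kv=k$, by comparing $\pi_1(Kv)\to\pi_1(k)$, which has to be split by the section.

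Step 3 is uniqueness in part 1). If two points $x\neq x'$ both defined $s^a_K$, then $Z^a_{v_x}\cap Z^a_{v_{x'}}$ would contain the noncyclic image of a $\lvZ/\ell$-elementary subgroup of $\pi_1(k)$ and the inertia data. This contradicts the mod-$\ell$ analogue of the independence fact of Notations/Remarks~\ref{CND}~b), namely that for distinct discrete valuations the decomposition groups in $\Pi^a$ meet in a subgroup with trivial inertia part; this can be detected via Kummer classes of a function with a zero at $x$ and no zero or pole at $x'$. Part 2) then follows formally: a strongly $t$-a.b.c.\ liftable $s^a_X$ has a lift $s^a_K$ to which part 1) applies, giving $x\in X(k)$, and $\qx^a_X\circ s^a_K=s^a_X$ shows that $s^a_X$ is defined by $x$. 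Non-uniqueness is expected here, as in example a).
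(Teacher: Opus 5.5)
Your overall shape (transport places of $k_t$ through the section, recover valuations of $K_t$ from commuting-liftable pairs, descend to a point of $X$) matches the paper. However, there is a genuine gap in your Step 2, which is where the proof actually lives, and the opening reduction is also invalid.

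\textbf{The non-triviality argument in Step 2.} The appeal to a ``too large abelian-by-central configuration'' contradicting the cup-product pairing in $H^2(K_t,\mathbb{Z}/\ell)$ is not an argument. Moreover, with only the $k$-rational places $t\mapsto a$, $a\in k$, no contradiction can be extracted. If every transported valuation is the constant prolongation of $(t-a)$, nothing degenerate happens, because the inertia generators at $k$-rational places of $\tilde k(t)$ satisfy no relation among themselves. (That constant prolongation is exactly the useless case to be excluded, not the desired outcome as the first sentence of your Step 2 says.)

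The missing idea is the reciprocity argument of Lemma~\ref{non-trivial}. By Kummer theory (Fact~\ref{fact3}), the inertia generators at all places in $\Sigma'$ (places of $\tilde k(t)$ of degree prime to $\ell$) generate freely modulo the single relation $\prod_{\Sigma'}\tau_{\tilde{\mathfrak v}}=1$, and likewise for $\Sigma'_K$ over $\tilde K(t)$. Suppose all transported canonical valuations $\tilde w_{\tilde{\mathfrak v}}$ were trivial on $K$. Then they would be the constant places $v_{\tilde K,p}$ ($p\in\tilde k[t]$) and $v_{\tilde K,\infty}$, which form a proper subset of $\Sigma'_K$. Applying the homomorphism $\tilde s^a_t$ to the relation would then give a relation on a proper subsystem, which is impossible.

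This argument uses all of $\Sigma'$, and it only produces some $\tilde{\mathfrak v}$, possibly of degree $>1$ over $k$, with $\tilde w_{\tilde{\mathfrak v}}|_K$ non-trivial. So you do not get $s^a_K(\pi_1(k))\subset Z^a_v$. You only get that an open subgroup of $\mathrm{im}(s^a_K)$ lies in a decomposition group, and your final ``compare $\pi_1(Kv)\to\pi_1(k)$'' step has nothing to act on.

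The paper closes this with Key Lemma~\ref{key-lemma-1}:
\begin{itemize}
\item Conjugating by $s^a_K(\sigma)$ carries this decomposition group to that of $\sigma(\tilde w_{\tilde K})$.
\item The two resulting open subgroups of $\mathrm{im}(s^a_K)$ meet non-trivially.
\item Distinct $k$-valuations of $K$ are independent, so Lemma~\ref{indepval} forces $\sigma(\tilde w_{\tilde K})=\tilde w_{\tilde K}$.
\end{itemize}
Hence $Kw_K\cap\tilde k=k$, and perfectness gives $\kappa(x)=k$.

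\textbf{The opening reduction.} Rational points do not descend along extensions of degree prime to $\ell$ by any norm or corestriction argument. For example, a pointless conic over $\mathbb{Q}$ has points over $\mathbb{Q}(i)$, which lies in the fixed field of every pro-$\ell$ Sylow subgroup for odd $\ell$. Uniqueness of the $k'$-point does not rescue descent either, since distinct Sylow subgroups need not meet in an open subgroup of either.

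The paper never restricts the section. It keeps $s^a_K$ on all of $\pi_1(k)$ and works at finite Galois levels $k_\alpha\supseteq k_1$ with $\mu_\ell\subset k_1$ and $k_1^\times/\ell\neq 1$, projecting to $\mathcal{G}^a_{K_{\alpha,t}}$ with $K_{\alpha,t}=Kk_\alpha(t)$.

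Your Step 1 has to be carried out at these finite levels too, for three reasons:
\begin{itemize}
\item Commuting-liftable pairs and Theorem~\ref{Topaz} live in $\mathcal{G}^a_F$ for a field $F$ containing $\mu_\ell$, not in $\Pi^a(K_t)$ with its $\pi_1(k)$-part.
\item The non-cyclicity of the transported decomposition groups comes from $k_\alpha^\times/\ell\neq 1$.
\item One only obtains $w|_{k_\alpha(t)}\geq\mathfrak w$, not equality. This is why the paper passes to canonical coarsenings $w_{\mathfrak w}$ and needs their compatibility across levels (Key Lemma~\ref{key-lemma}) to get a valuation of $\tilde K(t)$.
\end{itemize}

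The same finite-level device is what makes your uniqueness Step 3 work. The image of the section contains no inertia, so the contradiction must come elsewhere: the non-trivial image of $s^a_K(\pi_1(k_\alpha))$ in $\mathcal{G}^a_{Kk_\alpha}$ would lie in the decomposition groups of two independent valuations, which meet trivially by Lemma~\ref{indepval}.
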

Let us show that
Theorem~\ref{MThm1} implies Theorem~\ref{gentbsc}. 
By mere definitions, assertion~2) follows from assertion 1),
therefore it is enough to prove~1). For $s_K\in\Spi Kk$ and 
$s_t\in\Spi{K_t}{k_t}$ a $t$-birational lift of $s_K\!$, 
let $K_{s_t}|K_{s_K}\hra K_t^\sep|K^\sep$ 
be the fixed fields of $s_K\pml\pi_1(k)\pmr\subset\pi_1(K)$
and $s_t\pml\pi_1(k_t)\pmr\subset\pi_1(K_t).$ Set
$K_{s_K}=\cup_\alpha K_\alpha$ with $K_\alpha|K\hra K_{s_K}|K$ 
the finite subextensions. 
Then the normalization $X_\alpha\to X$ of $X$ in $K_\alpha|K$ is 
a geometrically integral model of $K_\alpha|k$ and the inclusion 
$K_\jb\supset K_\ia$ gives rise canonically to a (surjective) 
$k$-morphism $f_{\jb\ia}\!:X_\jb\!\to X_\ia.$ Further, setting $X_{\ia,t}\!:=
X_\ia\times_k k_t$ and $K_{\alpha,t}=k_t(X_{\alpha,t})=K_\ia(t)\!$, 
by mere definitions one has \hbox{$K_{\ia,t}\subset K_{s_t}$.} 
Hence $K_s\supset K_\ia\supset K$ 
and $K_{s_t}\supset K_{\ia,t}\supset K_t$ implies 
$\im(s_K)\subset\pi_1(K_\ia)\subset \pi_1(K)$ 
and $\im(s_t)\subset\pi_1(K_{\ia,t})\subset\pi_1(K_t).$ Thus
$s_K$ and $s_t$ give rise to a sections $s_{K_{\ia}}\in\Spi{\ia,t}k$ of 
$p_{K_\ia}=p_K|_{\pi_1(K_\ia)}$ and $s_{K_{\ia,t}}\in\Spi{K_{\ia,t}}{k_t}$ of 
$p_{\ia,t}=p_t|_{\pi_1(K_{\ia,t})}$, respectively. In particular, one has
that $q_{K_\ia}=q_K|_{\pi_1(K_{\ia,t})}$ and further, $s_{K_{\ia,t}}$ 
is a $\bm t$-birational lift of $s_{K_\ia}$, see the diagrams below:
\vskip-10pt
\[
\mathsurround=0pt
\begin{tikzcd}[column sep=scriptsize] 
\pi_1(K_t) \arrow[r, two heads, "p_t" '] \arrow[d, two heads, "q_K"]
  &\pi_1(k_t) \arrow[d, two heads, "q_k"] \arrow[l, bend right=10, "s_t" ']                       
    &\pi_1(K_t)\supset\hb{32}&\pi_1(K_{\ia,t}) \arrow[r, two heads, "p_{\ia,t}" ']
                      \arrow[d, two heads, "q_{K_\ia}"] 
      &\pi_1(k_t) \arrow[d, two heads, "q_k"] 
                \arrow[l, dashed, bend right=10, "s_{\ia,t}" ']
                          \\
\pi_1(K) \arrow[r, two heads, "p_K" '] 
    &\pi_1(k) \arrow[l, bend right=10, "s_K" '] 
       &\pi_1(K)\supset\hb{37}&\pi_1(K_\ia) \arrow[r, two heads, "p_{K_\ia}" '] 
         &\pi_1(k) \arrow[l, dashed, bend right=10, "s_{\ia}"'] \end{tikzcd}
         \\
\]

To conclude, recall that by Remark~\ref{CND3}, for $K_\alpha$ 
and $s_{K_\ia}\in\Spi {K_\alpha}k$ and its $t$-birational 
lifting $s_{\alpha,t}\in\Spi{K_{\alpha,t}}k$, one has the
resulting section $s^a_\alpha$ of 
$p^a_{K_\alpha}:\Pi^a(K_\alpha)\to\pi_1(k)$ which has the 
$t$-a.b.c.\ birational lifting $s_{\alpha,t}^c:\Pi^c(k_t)\to\Pi^c(K_t).$  
Hence by Theorem~\ref{MThm1} applied to the $k$-curve
$X_\alpha$ and $s_\alpha$, etc., it follows that $s_\alpha^a$ 
is defined by a unique closed point unique $\,x_\alpha\in X_\alpha(k)$.
On the other hand, for $K_\alpha\subset K_\beta$ and the 
canonical projection $f_{\beta\alpha}:X_\beta\to X_\alpha\!$,  
sorting through the definitions, 
one has: $x'_\alpha=f_{\beta\alpha}(x_\beta)\in X_\alpha(k)$
is a $k$-rational point of $X_\alpha$ which defines $s_{K_\ia}$
as well. Hence by the uniqueness $x_\alpha\in X_\alpha(k)$ 
defining $s_\alpha^a$, one must have $x'_\alpha= x_\alpha$. 
\vskip2pt
{\bf Conclude:} The compatible 
system of $k$-rational points $(x_\alpha)_\alpha$ defines 
the unique $k$-rational point $x_s\in X(k)$ which defines the 
$t$-birationally liftable section $s_K$ we started with.
\vskip5pt
\noindent
{\bf Plan of the paper}. 
\vskip2pt
In Section 2 we review special facts 
about Hilbert decomposition theory: Especially the situation
of the a.b.c.\ field extensions, and explaining how some 
results of \nmnm{Topaz},~\cite{To1} applied to our situation 
recover arithmetically significant valuations from 
the group theoretical information. 
\vskip2pt
In section 3 we show how to 
apply the results from Section~2 to get arithmetically significant 
valuations from Galois sections in the concrete situation at the core 
of the paper. 
\vskip2pt
Finally, in Section 4 we consider an {\it arithmetical\/} refinement 
of the context of Theorem~\ref{MThm1}, by introducing 
{\it $\tl k|k$-$t$-a.b.c.\ birationally\/} liftable sections\ha3---\ha1thus
refining the $t$-a.b.c.\ birational liftability of sections, which coincides
with the latter in case $\tl k=\oli k.$ This being said,
Theorem~\ref{MThm1} above follows by mere definitions
from Theorem~\ref{MThm} of Section~4.
\section{Commuting Liftability and Recovering Valuations}
\subsection{\bf Basics of valuations theory}
$\ha0$
\vskip3pt
For arbitrary fields $\Oy$, let $\Val{}(\Oy)$ be the set of (equivalence 
classes of) valuations $v$ of $\Oy.$ 
For $v\in\Val{}(\Oy)$, let $\eum_v\subset\clO_v$ be its valuation 
ideal\ha1/\ha1ring, $\Oy v=\kappa(v)=\clO_v/\eum_v$ its residue  
field, and $v\Oy=\Oy^\times\hb3/\clO_v^\times$ the (canonical) 
value group of $v.$ Recall that $\Spec(\clO_v)$ is a chain w.r.t.
inclusion, and for each $\eum_1\in\Spec(\clO_v)$, the localization 
$\clO_1\!:=(\clO_v)_{\eum_1}$ is a valuation ring with valuation 
ideal $\eum_1\!$ and valuation $v_1\in\Val(\Oy)$, i.e., 
$\clO_1=\clO_{v_1}$ and $\eum_1=\eum_{v_1}.$
Moreover, all the rings $\clO_1\subset\Oy$ with $\clO_v\subset\clO_1$ 
are the valuation rings of the form above, i.e., $\clO_1=(\clO_v)_{\eum_1}$ 
for some $\eum_1\in\Spec(\clO_v).$ 
Thus setting 
$\clV_v(\Oy)\!:=\{v_1\in\Val(\Oy)\mid \clO_{v_1}\supset \clO_v\}$ and
$\clR_{\clO_v}\!:=\{\clO_1\subset\FF\mid\clO_v\subset\clO_1\}\!$,
one has canonical bijections:
\[
\clV_v(\Oy)\to\clR_{\clO_v}\to\Spec(\clO_v),\quad
v_1\mapsto\clO_{v_1}\mapsto\eum_{v_1}.
\] 
Finally, $\Val{}(\Oy)$ carries a natural partial ordering $\leqslant$ 
defined by the equivalent conditions: 
\vskip6pt
\centerline{$v_1\leqslant v$ \ iff  \ $\clO_{v_1}\supset\clO_{v}$ \ iff \
$\eum_{v_1}\subset\eum_{v}$ \ iff \ $\eum_{v_1}\in\Spec(\clO_{v}).$}
\vskip5pt
\noindent
If $v_1\!\leqslant\! v$, we say that $v_1$ a {\it coarsening\/} of $v$, 
and $v$ is a {\it refinement\/} of $v_1.$ Further, if \hbox{$v_1\!\leqslant\! v$,} 
then $\clO_{0}\!:=\clO_v/\eum_{v_1}$ is a valuation ring of the residue
field $\Oy_0\!:=\Oy v_1$ 
having $\eum_{0}=\eum_{v}/\eum_{v_1}$ as valuation ideal, 
and obviously, $\Oy_0v_0=\clO_{v_0}/\eum_{v_0}=\clO_v/\eum_v=\Oy v.$ 
We denote the resulting valuation of $\clO_0$ by $v_0\!:=v/v_1$ 
and call $v_0$ the (valuation theoretical) 
{\it quotient\/} of $v$ by $v_1\!$, and set $v=v_0\circ v_1$ and call 
$v$ the (valuation theoretical) {\it composition\/} of $v_0$ and $v_1.$
Further, the projection $v\Oy=\Oy^\times\hb3/\clO_v^\times 
\srjr\Oy^\times\hb3/\clO_{v_1}^\times=v_1\Oy$ is order preserving, 
and its kernel is a convex subgroup $\Delta_1\leqslant v\Lambda$
which is canonically isomorphic to $v_0\Lambda_0,$ thus getting
an exact sequence of ordered groups \hbox{$0\to v_0\Oy_0\to v\Oy\to v_1\Oy\to0.$}
Conversely,  if $\Delta_1\leqslant v\Oy$ is a convex subgroup, 
then $v\Oy\to v\Oy/\Delta_1$ is order preserving, giving rise to a valuation 
$v_1\in\Val(\Oy)$ with $v_1\leqslant v.$ Conclude that $\clV_v(\Oy)$ is in 
canonical bijection with the set 
$\{\Delta_1\leqslant v\Oy\,|\,\hbox{convex subgroup}\}.$
\vskip5pt
Last but not least, for $v_1\vrg v_2\in\Val{}(\Oy)$ there is a well defined 
valuation $v=\min(v_1\vrg v_2)$ in $\Val{}(\Oy)$ whose valuation ring $\clO_v$ 
is characterized as follows: $(\clO_{v_1})_{\eum}=\clO_v=(\clO_{v_2})_{\eum}$ 
and $\eum_v=\eum$, where $\eum\in\Spec(\clO_{v_1})\cap\Spec(\clO_{v_2})$
is the unique maximal element w.r.t.\ inclusion. Equivalently, $\eum$ is  
maximal in $\Spec(\clO_{v_1})\cap\Spec(\clO_{v_2})$ satisfying
$\eum\cap\clO_{v_1}^\times=\emptyset=\clO_{v_2}\cap\eum.$ 
\vskip5pt
Finally, every $v\in\Val{}(\Oy)$ defines a field 
topology $\tau_v$ on $\Oy$, and $\tau_v$ is discrete
iff $v$ is the trivial valuation. 
Let $v_1\vrg v_2\in\Val{}(\Oy)$ be nontrivial. Then
$\tau_{v_1}=\tau_{v_2}$ \ iff \ $v_1$ and $v_2$ have
a common \underbar{non-trivial} coarsening $v\leqslant v_1\vrg v_2$
and if so, $\tau_{v_1}=\tau_v=\tau_{v_2}$.
If this is the case, we say that $v_1\vrg v_2$ are {\it dependent.\/} 
Complementary, we say that $v_1\vrg v_2$ are 
{\it independent,\/} if $\tau_{v_1}\neq\tau_{v_2}$, or 
equivalently, the diagonal embedding 
$\Oy\to(\Oy,\tau_{v_1})\times (\Oy,\tau_{v_2})$ has 
a dense image. Notice that for $v_1\vrg v_2\in\Val{}(\Oy)$, and 
$U_i\subset \Oy$ non-empty $v_i$-open, $i=1,2$, 
the following are equivalent:
\vskip5pt
\centerline{{\rm(i)} \ $v_1$, $v_2$ are independent;
   \ \ \ {\rm(ii)} \ $U_1\!-U_2=\Oy$; \ \ \
    {\rm(iii)} \ $\Oy^\times\subset U_1\!\cdot U_2$.}
\vskip5pt
\noindent
\begin{fact}
\label{fkt0}
{\it In general, given $v_1,v_2\in\Val(\Oy)$ non-trivial and
$v\!:=\min(v_1,v_2)$, set $U_{v_i}=1+\eum_{v_i}$, 
$i=1,2$ and $U_v=1+\eum_v.$ The following hold:
\vskip2pt
\itm{25}{ 
\item[{\rm1)}] If $v_1\leqslant v_2$, one has 
$U_{v_1}\hb3\cdot\hb1 U_{v_2}=U_{v_2}\!$, 
$\clO_{v_1}\hb3\cdot\clO_{v_2}=\clO_{v_1}\!$,
\ $U_{v_2}\hb3-\hb1 U_{v_1}=\eum_{v_2}.$
\vskip2pt
\item[{\rm2)}] If $v<v_1,v_2$ strictly, then 
$U_{v_1}\hb3\cdot\hb1 U_{v_2}=\clO_v^\times=
\clO_{v_1}^\times\hb3\cdot\clO_{v_2}^\times$,
\ $U_{v_1}\hb3-\hb1 U_{v_2}=\clO_v=\clO_{v_1}-\clO_{v_2}.$
\vskip2pt
}}
\end{fact}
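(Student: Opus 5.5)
We argue directly from the description of $\min(v_1,v_2)$ recalled above, working throughout with valuation rings and their prime ideals. Write $\eum:=\eum_v$, $\clO:=\clO_v$, so that $\clO_{v_i}\subset\clO$ and $\eum\subset\eum_{v_i}$ for $i=1,2$.

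For 1), let $v_1\leqslant v_2$, i.e.\ $\clO_{v_2}\subset\clO_{v_1}$ and $\eum_{v_1}\subset\eum_{v_2}$.
\begin{itemize}
\item Since $U_{v_1}\subset U_{v_2}$ and $U_{v_2}$ is a multiplicative group, $U_{v_1}U_{v_2}=U_{v_2}$.
\item Since $1\in\clO_{v_2}$ and $\clO_{v_1}$ is a ring containing $\clO_{v_2}$, $\clO_{v_1}\clO_{v_2}=\clO_{v_1}$.
\item The inclusion $U_{v_2}-U_{v_1}\subset\eum_{v_2}$ holds because $(1+a)-(1+b)=a-b$ with $a\in\eum_{v_2}$, $b\in\eum_{v_1}\subset\eum_{v_2}$.
\item Conversely, any $a\in\eum_{v_2}$ can be written as $a=(1+a)-1$ with $1\in U_{v_1}$.
\end{itemize}

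For 2), assume $v<v_1$ and $v<v_2$ strictly.
\begin{itemize}
\item The inclusions $U_{v_1}U_{v_2}\subset\clO_{v_1}^\times\clO_{v_2}^\times\subset\clO^\times$ and $U_{v_1}-U_{v_2}\subset\clO_{v_1}-\clO_{v_2}\subset\clO$ are trivial, since $\clO_{v_i}\subset\clO$ and units of $\clO_{v_i}$ are units of $\clO$.
\item It therefore suffices to prove $\clO\subset U_{v_1}-U_{v_2}$ and $\clO^\times\subset U_{v_1}U_{v_2}$.
\end{itemize}

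The key step is to pass to the residue field $\Oy_0:=\Oy v$. There $w_i:=v_i/v$ are valuations of $\Oy_0$ with valuation rings $\clO_{v_i}/\eum$ and ideals $\eum_{v_i}/\eum$. They are non-trivial because $v<v_i$ strictly. They are independent: a common non-trivial coarsening $w\leqslant w_1,w_2$ would yield, by composing, a valuation $w\circ v$ with $v<w\circ v\leqslant v_1,v_2$. That contradicts the maximality of $\eum$ in $\Spec(\clO_{v_1})\cap\Spec(\clO_{v_2})$, i.e.\ the definition of $v=\min(v_1,v_2)$. This translation is the one point that needs care: one checks via the bijection $\clV_v(\Oy)\leftrightarrow\Spec(\clO_v)$ that valuations of $\Oy_0$ correspond to refinements of $v$, compatibly with the order.

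Now apply the approximation criterion recalled above (the equivalence (i)$\Leftrightarrow$(ii)$\Leftrightarrow$(iii) for independent valuations) to $\Oy_0$, with the $w_i$-open sets $1+\eum_{w_i}$. This gives
\[
(1+\eum_{w_1})-(1+\eum_{w_2})=\Oy_0 \quad\text{and}\quad \Oy_0^\times\subset(1+\eum_{w_1})(1+\eum_{w_2}).
\]

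Lift back to $\Oy$. Given $a\in\clO$, pick $b_i\in\eum_{v_i}$ with $\bar a=\bar b_1-\bar b_2$. Then $a=(1+b_1)-(1+b_2)+c$ with $c\in\eum\subset\eum_{v_1}$. Absorbing $c$ into $b_1$ gives $a\in U_{v_1}-U_{v_2}$.

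Similarly, given $u\in\clO^\times$, pick $b_i\in\eum_{v_i}$ with $\bar u=\overline{(1+b_1)(1+b_2)}$. Then $u=(1+b_1)(1+b_2)(1+c)$ with $c\in\eum$, because the quotient $u/\bigl((1+b_1)(1+b_2)\bigr)$ is a unit of $\clO$ with residue $1$. Since $1+c\in U_v\subset U_{v_1}$, it can be absorbed into $1+b_1$. This yields all the claimed equalities.
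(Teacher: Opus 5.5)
Your proposal is correct and follows the paper's route: you pass to the residue field $\Oy v$, invoke independence of the quotient valuations $v_i/v$ together with the approximation criterion (ii)/(iii), and lift back using that $U_v\subset U_{v_1}$ (resp.\ $\eum_v\subset\eum_{v_1}$) is the kernel of reduction. You are more explicit than the paper in two places: you justify the independence of $v_i/v$ through the maximality of $\eum_v$, where the paper just says ``by mere definitions''; and you obtain the intermediate sets $\clO_{v_1}^\times\cdot\clO_{v_2}^\times$ and $\clO_{v_1}-\clO_{v_2}$ by sandwiching them between the two sides.
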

\begin{proof} The assertions from 1) follow by mere definition.
\vskip2pt
\noindent
To~2): By mere definitions, the quotient valuations
$\oli v_i=v_i/v$, on the residue field $\Oy v$ are independent.
Hence setting $U_{\oli v_i}\!:=1+\eum_{\oli v}$, $i=1,2$ one
has that $\Oy v^\times=U_{\oli v_1}\hb3\cdot U_{\oli v_2}$
by the discussion above. Further, the canonical exact sequence 
\[
1\to U_v\to\clO_v^\times\to \Oy v^\times\to 1\,
\leqno{\indent(*)}
\]
defines exact sequences $1\to U_v\to U_{v_i}\to U_{\oli v_i}\to 1$, 
thus the subsequence below of $(*)$ above:
\[
1\to U_v\hra U_{v_1}\!\cdot U_{v_2}\srjr U_{\oli v_1}\!\cdot U_{\oli v_2}\to 1\,,
\leqno{\indent(**)}
\]
in which the first map is injective, and the second one is surjective. 
On the other hand, sine $\oli v_1,\oli v_2$ are independent on $\Oy v$, 
one has $\Oy v^\times=U_{\oli v_1}\cdot U_{\oli v_2}$. Hence since 
$U_{v_1}\cdot U_{v_2}\subset\clO_v^\times$ and $\ker(\pi)=U_v$, we 
conclude that $(**)$ is exact, implying finally $U_{v_1}\!\cdot U_{v_2}=\clO_v^\times$.
\vskip4pt
The proof of the assertion $U_{v_1}\hb3-\hb1 U_{v_2}=\clO_v$ is similar,
being the additive variant.
\end{proof}
{\bf Canonical $v$-valuation.} Let $\Ox|\Oy$ be an arbitrary field 
extension and 
$w\in\Val(\Ox)$ and $\vxy\in\Val(\Oy)$ satisfy 
$w\!_\Lm:=w|_\Ox\geqslant\vxy.$ Equivalently, by general 
valuation theory, one has: 
\[
\clO_w\cap\Oy=\clO_{w_\Lambda}\subset\clO_\vxy\,\hbox{,} \ \  
(1+\eum_w)\cap\Oy=1+\eum_{w_\Lm}\supset 1+\eum_\vxy\hbox{, \ \ etc.}
\]
In particular, by the above discussion 
about coarsening, $\clO_\vxy=(\clO_{w_\Lm})_{\eum_v}$ 
is the localization of $\clO_{w_\Lm}$ with respect to its prime
ideal $\eum_\vxy\in\Spec(\clO_{w_\Lm}).$ Equivalently, setting
$\Sigma_{w_\Lm}:=\clO_{w_\Lm}\backslash\,\eum_\vxy$, one has
that $\Sigma_{w_\Lm}$ is a multiplicative system in
$\clO_{w_\Lm}$ defining $\clO_v$ as follows:
\[
\clO_v=(\clO_{w_\Lm})_{\eum_v}=\Sigma^{-1}_{w_\Lm}\clO_{w_\Lm}.
\]
\begin{lemma}
\label{can-val-abstr}
$\clO_*\!:=\Sigma^{-1}_{w_\Lm}\clO_w\subset\Ox$ 
is a valuation ring with valuation $w_*\in\Val(\Ox)$ s.t.\ $w_*|_\Oy=v.$
\end{lemma}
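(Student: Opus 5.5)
Set $\Sigma:=\Sigma_{w_\Lm}=\clO_{w_\Lm}\backslash\eum_\vxy$. I will show directly that $\clO_*=\Sigma^{-1}\clO_w$ is a valuation ring of $\Ox$, and then compute $\clO_*\cap\Oy$.

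First, $\Sigma\subset\clO_{w_\Lm}\subset\clO_w$ is a multiplicative system. Indeed, $\eum_\vxy$ is a prime ideal of $\clO_{w_\Lm}$, so its complement is closed under multiplication, and $0\notin\Sigma$. Hence $\clO_*=\Sigma^{-1}\clO_w$ is a subring of $\Ox$ containing $\clO_w$. Any subring of $\Ox$ containing a valuation ring of $\Ox$ is again a valuation ring: for $x\in\Ox^\times$, either $x\in\clO_w\subset\clO_*$ or $x^{-1}\in\clO_w\subset\clO_*$. So $\clO_*$ is a valuation ring with some valuation $w_*\leqslant w$. By the discussion of coarsenings, $\clO_*=(\clO_w)_{\eum_*}$ for the prime ideal $\eum_*:=\eum_{w_*}\in\Spec(\clO_w)$.

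The main step is $\clO_*\cap\Oy=\clO_\vxy$. The inclusion $\clO_\vxy=\Sigma^{-1}\clO_{w_\Lm}\subset\Sigma^{-1}\clO_w\cap\Oy$ is immediate. For the reverse inclusion, take $a\in\Oy$ with $a\notin\clO_\vxy$. Then $a^{-1}\in\eum_\vxy$, and in particular $a^{-1}\in\clO_{w_\Lm}$. Suppose $a\in\clO_*$, say $a=b/\sigma$ with $b\in\clO_w$ and $\sigma\in\Sigma$. Then $\sigma=b\,a^{-1}$, so $w(\sigma)\geqslant w(a^{-1})$. Since $\sigma$ and $a^{-1}$ both lie in $\Oy$, this gives $w_\Lm(\sigma)\geqslant w_\Lm(a^{-1})$, i.e.\ $\sigma\in a^{-1}\clO_{w_\Lm}$. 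But $a^{-1}\clO_{w_\Lm}\subset\eum_\vxy$, because $\eum_\vxy$ is an ideal of $\clO_{w_\Lm}$. This contradicts $\sigma\notin\eum_\vxy$. Hence $\clO_*\cap\Oy\subset\clO_\vxy$, and so $\clO_*\cap\Oy=\clO_\vxy$.

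The whole point is that $w$ restricts to $w_\Lm$ on $\Oy$, so divisibility in $\clO_w$ between elements of $\Oy$ is the same as divisibility in $\clO_{w_\Lm}$. Finally, the equality $\clO_{w_*}\cap\Oy=\clO_\vxy$ means exactly that $w_*|_\Oy=\vxy$ (as equivalence classes of valuations), which proves the lemma. As a consistency check, $\eum_*\cap\Oy=\eum_\vxy$, because the maximal ideal of $\clO_*\cap\Oy$ is the trace of $\eum_*$.
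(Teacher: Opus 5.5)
Your proof is correct and takes essentially the paper's route. In both arguments the key point is that for $a=b/\sigma\in\Oy$ with $b\in\clO_w$, $\sigma\in\Sigma_{w_\Lm}$, one has $b=a\sigma\in\clO_w\cap\Oy=\clO_{w_\Lm}$. The paper concludes from this directly that $a\in\Sigma^{-1}_{w_\Lm}\clO_{w_\Lm}=\clO_\vxy$, while you run the same step in contrapositive form via $\sigma\in a^{-1}\clO_{w_\Lm}\subset\eum_\vxy$, and you also state explicitly that $\clO_*$ is a valuation ring because it contains $\clO_w$, which the paper leaves implicit.
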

\begin{proof}
Indeed, $\clO_*\cap\Oy=\{{a\over r}\in\Oy\mid a\in\clO_w,\,r\in\Sigma_{w_\Lm}\}$
and we have to prove that $\clO_*\cap\Oy=\clO_\vxy.$ For the direct 
inclusion, let $x={a\over r}\in\clO_*\cap\Oy$ with $a\in\clO_w$, 
$r\in\Sigma_{w_\Lm}.$ Then $a=rx\in\Oy$, thus concluding that
$a\in\clO_w\cap\Oy=\clO_{w_\Lm}\subset\clO_v$, and finally
$x={a\over r}\in\Sigma^{-1}_{w_\Lm}=\clO_v.$ The converse implication
is clear, because $\clO_v=\Sigma^{-1}_{w_\Lm}\clO_{w_\Lm}
\subset\Sigma^{-1}_{w_\Lm}\clO_w=\clO_*.$ 
\end{proof}
Let $\eum_1\in\Spec(\clO_{w_*})\subset\Spec(\clO_w)$ be the
(unique) prime ideal which is {\it minimal\/} satisfying 
$\eum_1\cap\Oy\supset\eum_v.$ Then one
has $\eum_v\subset\eum_1\cap\Oy\subset\eum_{w_*}\cap\Oy=\eum_v$,
thus $\eum_1\cap\Oy=\eum_v.$ We conclude that the valuation 
of the valuation ring $\clO_1=(\clO_{w_*})_{\eum_1}\!$, 
denoted $w_v$, satisfies $w_v|_\Oy=v.$
\begin{definition}
In the above notation and context, $w_\vxy$ is the 
{\it canonical $\vxy$-valuation of~$\Ox$.\/} Thus $w_v$ is unique 
minimal with $w_v\leqslant w$, $w_v|_\Lm=v$, that is,
$\clO_{w_\vxy}\cap\Oy=\clO_\vxy$, $\eum_{w_\vxy}\cap\Oy=\eum_\vxy.$
\end{definition}
Finally, let $(\Ox'\!,w'\,|\,(\Ox,w)$ and $(\Oy'\!,\vxy')\,|\,(\Oy,\vxy)$ be
algebraic extensions of valued fields such that $\Ox'\supset\Oy'$ and 
$w'|_{\Lambda'}\geqslant v'\!$, thus $w=w'|_\Omega$ satisfies 
$w|_\Lambda=w'|_\Lambda\geqslant v'|_\Lambda=v.$ For short, 
we denote this situation by $(\Ox'|\Oy'\!,\,w'|\vxy')\big|(\Ox|\Oy,w|\vxy).$
Then one has the following (obvious) facts. 
\begin{fact}
\label{fkt-c-val}
{\it In the above notation, the following hold: 
\vskip2pt
\itm{25}{
\item[{\rm1)}] $\clO_{w_v}^\times\cap\Oy=\clO^\times_\vxy$ 
and $(1+\eum_{w_v})\cap\Oy=1+\eum_\vxy.$ 
\vskip2pt
\item[{\rm2)}] Let $(\Ox'|\Oy'\!,\,w'|\vxy')\big|(\Ox|\Oy,v|\vxy)$ be as 
above, thus $w'|_\Oy\geqslant\vxy$ for the field extension $\Ox'|\Oy.$ 
\item[] Then $w'_\vxy=w'_{\vxy'}$ and $w'_\vxy|_{\Om}=w_\vxy=w'_{\vxy'}|_\Om.$
}}
\end{fact}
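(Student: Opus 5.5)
Both assertions come from unwinding the definition of the canonical valuation and from how valuation rings behave under localization and restriction. Throughout, $w_v$ is the valuation of $\clO_1=(\clO_{w_*})_{\eum_1}$, where $\clO_{w_*}=\Sigma^{-1}_{w_\Lm}\clO_w$ as in Lemma~\ref{can-val-abstr}. The key facts, established just before the Definition, are
\[
\clO_{w_v}\cap\Oy=\clO_v,\qquad \eum_{w_v}\cap\Oy=\eum_v .
\]

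For 1), I intersect with $\Oy$. Let $x\in\Oy$. Then $x\in\clO_{w_v}^\times$ iff $x\in\clO_{w_v}$ and $x\notin\eum_{w_v}$; for $x\neq 0$ this is the same as saying both $x$ and $x^{-1}$ lie in $\clO_{w_v}$. Since $x,x^{-1}\in\Oy$, this is equivalent to $x,x^{-1}\in\clO_v$, i.e.\ $x\in\clO_v^\times$. Likewise, $x=1+m$ with $m\in\eum_{w_v}$ and $x\in\Oy$ forces $m=x-1\in\Oy\cap\eum_{w_v}=\eum_v$. Hence $(1+\eum_{w_v})\cap\Oy=1+\eum_v$.

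For 2), I first use the characterization of $w_v$ as the unique minimal valuation with $w_v\leqslant w$ and $w_v|_\Oy=v$. Minimality holds because the coarsenings of $w$ form a chain: they correspond to $\Spec(\clO_w)$. Restriction to $\Oy$ is monotone along this chain, and $\eum_1$ was chosen minimal with $\eum_1\cap\Oy\supset\eum_v$.
\itm{25}{
\item[(a)] To show $w'_v=w'_{v'}$: both are coarsenings of $w'$ on $\Ox'$, so they are comparable. The valuation $w'_{v'}$ restricts to $v'$ on $\Oy'$, hence to $v'|_\Oy=v$ on $\Oy$, so $w'_v\leqslant w'_{v'}$ by minimality. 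Conversely, $w'_v|_{\Oy'}$ is a valuation of $\Oy'$ lying between $w'|_{\Oy'}$ and the trivial valuation, and it extends $v$. Since $\Oy'|\Oy$ is algebraic, there are no proper inclusions among extensions of the fixed valuation $v$ (the incomparability property of valuations in algebraic extensions). Both $w'_v|_{\Oy'}$ and $v'$ are coarsenings of $w'|_{\Oy'}$, so they are comparable, and both extend $v$. Therefore $w'_v|_{\Oy'}=v'$, and minimality of $w'_{v'}$ gives $w'_{v'}\leqslant w'_v$.
\item[(b)] To show $w'_v|_\Om=w_v$: the restriction $w'_v|_\Om$ is a coarsening of $w'|_\Om=w$ and restricts to $v$ on $\Oy$, so $w_v\leqslant w'_v|_\Om$. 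For the reverse inequality, $\Ox'|\Ox$ is algebraic, so coarsenings of $w'$ correspond bijectively and order-preservingly to coarsenings of $w$; this comes from going-up/incomparability, or equivalently from $w\Ox\subset w'\Ox'$ having torsion quotient, which gives a bijection of convex subgroups. Let $u'$ be the coarsening of $w'$ with $u'|_\Om=w_v$. Then $u'|_\Oy=v$, so minimality gives $w'_v\leqslant u'$, and restricting gives $w'_v|_\Om\leqslant w_v$.
}

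The only non-formal input, and the main obstacle, is this correspondence between coarsenings under algebraic extensions: coarsenings of $w'$ correspond bijectively to those of $w'|_\Om$ and of $w'|_{\Oy'}$, and comparable extensions of one valuation coincide. I would cite it from standard valuation theory through the convex-subgroup description of coarsenings recalled in the basics subsection, using that $w'\Ox'/w\Ox$ is torsion.
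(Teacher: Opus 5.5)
Your proposal is correct and follows the same route as the paper: its proof of 2) only invokes incomparability for the algebraic extension $\Lambda'|\Lambda$ (for coarsenings of $w'$, restricting to $v'$ on $\Lambda'$ is equivalent to restricting to $v$ on $\Lambda$), leaves the analogous step for $\Omega'|\Omega$ as ``etc.'', and treats 1) as following from the definitions. You make both steps and the minimality characterization of $w_v$ explicit, and by restricting to coarsenings of $w'$ you state correctly what the paper's ``for any valuations $\tilde w'$'' loosely asserts.
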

\begin{proof} 
Assertion 1) follows by mere definitions, etc. For assertion~2), recall 
that for any valuations $\tilde w'$ of $\Ox'$ and $\tilde\vxy'|\tilde\vxy$
of the algebraic extension $\Oy'|\Oy$ one has: $\tilde w'|_{\Oy'}=\vxy'$
iff $\tilde w'|_{\Oy}=\vxy$, etc.
\end{proof}
\vskip5pt
\subsection{\bf Basics of Hilbert decomposition theory, especially in $\GGa\FF$}
$\ha0$
\vskip5pt
Let $\FF'|\EE$ be an algebraic field extension, $v\in\Val(\EE)$ be
a fixed valaution, and $\clV_v(\FF')$ be the set of prolongations 
$w'|v$ of $v$ to $\FF'|\EE.$ Recall that $\Val_v(\FF')$ is a profinite 
topological space in the patch topology.\footnote{Actually, 
$\Val_v(\FF')$ endowed with the patch topology is a profinite
space even if $\FF'|\EE$ is not algebraic.} Moreover, 
if $\FF'|\EE$ is normal algebraic, the profinite automorphisms 
group $G(\FF'|\EE):=\Aut_\EE(\FF')$ acts transitively 
and continuously on the profinite space $\clV_v(\FF')$ via 
$(w'\hb2,\,\gx)\mapsto w'^\gx\hb2:=w'\circ\gx^{-1}\hb3=:\hb2w''\!$. 
And if  $T_{w'|v}\lhd Z_{w'|v}$ are the inertia/decomposition 
groups of $w'|v$, then $T_{w''|v}\!=\gx\,T_{w'|v}\,\gx^{-1}$ 
and $Z_{w''|v}\!=\gx\,Z_{w'|v}\,\gx^{-1}\!$, and for any 
$w'\in\clV_v(\FF')$ fixed have: 
\vskip4pt
\centerline{$\clV_v(\FF')=G(\FF'|\EE)\!\cdot w' \cong Z_{w'|v}\!
\backslash G(\FF'|\EE)$ as $G(\FF'|\EE)$-spaces, canonically.} 
\vskip4pt
\noindent
Further, the residue field extension $\FF'\hb1w'|\EE v$ is normal 
algebraic, and setting $G_{w'|v}:={\rm Aut}_{\EE v}(\FF'\hb1w')$, 
one has the canonical exact sequence: 
\vskip4pt
\centerline{$1\to T_{w'|v}\hor{} Z_{w'|v}\hor{pr} G_{w'|v}\to1$.}  
\vskip2pt
Next let $v_1<v$ in $\Val{}(\EE)$. There are prolongations 
$w'_1|v_1$ of $v_1$ to $\FF'|\EE$ such that $w'_1<w'.$ 
Further, for any such $w'_1|v_1$ the following hold: First, 
$Z_{w'|v}\subset Z_{w'_1|v_1}$ and both $T_{w'_1|v_1}\lhd T_{w'|v}$ 
and $T_{w'_1|v_1}\lhd Z_{w'|v}$. Second, $w'_0\!:=w'/w'_1$ prolongs 
the valuation $v_0\!:=v/v_1$ to $\FF' w'\!$, and via the canonical exact sequence 
$1\to T_{w'_1|v_1}\hor{} Z_{w'_1|v_1}\hor{\scriptscriptstyle pr} G_{w'_1|v_1}\to 1$
the following hold: 
\vskip5pt
\centerline{$Z_{w'_0|v_0}=pr(Z_{w'|v})=Z_{w'|v}/T_{w'_1|v_1}$ and
$T_{w'_0|v_0}=pr(T_{w'|v})=T_{w'|v}/T_{w'_1|v_1}$,} 
\vskip5pt
\noindent
giving rise to a commutative diagram exact sequences of the form:
\[
\begin{matrix}
1\,\,\to& T_{w'_1|v_1}&\hor{} &Z_{w'_1|v_1}&\hor\pr& 
                                      G_{w'_1|v_1}&\to\,\,1_{\nix_{\nix_\nix}}  \cr
          &|\hb2| &&\uparrow&         &\uparrow&                              \cr
1\,\,\to& T_{w'_1|v_1}&\hor{} &Z_{w'|v}&\hor{\pr}&Z_{w'_0|v_0}&\to\,\,1\,.
\end{matrix}
\] 

And recall that if $w'|v$ is tame, i.e., $T_{w'|v}$ has order prime 
to $\chr(\EE v)$, $T_{w'|v}$ is abelian and
$T_{w'|v}=\Hom(w'\FF'\!/\hb1v\EE,\,\mu_{\FF'\hb1w'})$
with $\mu_{\FF'\hb1w'}\subset \FF'\hb1w'$ the roots of unity in 
$\FF'\hb1w'$. And the  action of $Z_{w'|v}$ on $T_{w'|v}$ factors 
through $Z_{w'|v}\srjr G_{w'|v}$, and $G_{w'|v}$ acts on 
$T_{w'|v}=\Hom(w'\!\FF'\!/\hb1v\EE,\ha1\mu_{\FF'\hb1w'})$ 
via the cyclotomic character of $G_{w'|v}$.
\vskip5pt
We next consider the following special situation of the
general context above: 
Let $\ell>2$ be an odd prime number (fixed throughout), and 
$\FF|\EE$ be a Galois extension, $\chr(\EE)\neq\ell$, 
and $\mu_\ell\subset\FF$. Let $\FF^c|\FF^a|\FF$ be the 
maximal \abcZ, respectively $\lvZ/\ell$ 
elementary abelian, extensions of $\FF$, and for the 
corresponding exact sequence of Galois groups
\[
1\to\Delta_\FF\!:=\Gal(\FF^c|\FF^a)\to \GGc\FF:=
\Gal(\FF^c|\FF)\to\GGa\FF:=\Gal(\FF^a|\FF)\to1,
\]
denote $\GGc\FF\ni\tlsgm
\mapsto\tlsgm|_{\FF^a}=:\sigma\in\GGa\FF$ the 
corresponding projection. 
%
\vskip5pt
We notice/recall that by Kummer Theory, 
one has $\GGa\FF=\Hom(\FF^\times\!\!,\mu_\ell)$,  
and $\GGac_\FF$ 
is the maximal $\lvZ/\ell$ elementary abelian 
quotient of the absolute Galois group $G_{\FF^a}$ 
on which $\GGa\FF$ acts trivially. 
One obtains $\FF^c|\FF$ as follows: $\GGa\FF$
acts canonically on ${\FF^a}^\times\hb4/\ell$, and let
$A:=({\FF^a}^\times\hb4/\ell)^{\GGa\FF}$ be the 
subgroup of invariants; that is, $u\in \FF^a$
lies in $A$ iff $\forall\,\sigma\in\GGa\FF$ 
$\exists\,r_\sigma\in \FF^a$ such that $\sigma(u)=ur_\sigma^\ell$. 
Then one has $\FF^c=\FF^a[\root\ell\of A]$. 
From this discussion immediately follows the following.
\vskip7pt
\noindent
{\bf Basic Fact.} \ {\it $\FF^c|\FF^a|\EE$
are Galois extensions of $\EE$.\/} 
\vskip5pt
\noindent
One has the following basic facts (well known to experts, but 
I cannot give a precise reference).
\begin{fact}
\label{basic-fact}
%
%
{\it Suppose that $\mu_\ell\subset F$ provided
$\chr(\FF)\neq\ell$. For a valuation $w\in\Val{}(\FF)\!$,
let $w^a|w$ be a prolongation of $w$ to $\FF^a|\FF$, and 
$\FF^h$ be the $w$-Henselization of $F.$ The following hold:
\vskip2pt
\itm{25}{
\item[{\rm1)}] The compositum $\FF^h\FF^a$ equals the maximal 
$\ell$-elementary abelian extension $(\FF^h)^a$ of $\FF^h$.
\vskip2pt
\item[{\rm2)}] The separable part of $\FF^a w^a|\FF w$ is the 
maximal $\ell$-abelian extension $(\FF w)^a|\FF w$ of $\FF w.$
}}
\end{fact}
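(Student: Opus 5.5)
The plan is to use Kummer theory on both sides and reduce both assertions to statements about $\ell$-th powers in the Henselization and in the residue field. Since $\mu_\ell\subset\FF$, the compositum $\FF^h\FF^a$ is the Kummer extension $\FF^h[\root\ell\of{\FF^\times}]$. The maximal $\ell$-elementary abelian extension of $\FF^h$ is $\FF^h[\root\ell\of{(\FF^h)^\times}]$. Hence 1) is equivalent to
\[
(\FF^h)^\times=\FF^\times\cdot(\FF^{h\times})^\ell .
\]
For 2), the extensions of $\FF w$ of the form $\FF w[\root\ell\of{\bar u}]$ with $\bar u\in(\FF w)^\times$ exhaust $(\FF w)^a$, because $\mu_\ell\subset\FF w$ as $\chr(\FF w)\neq\ell$. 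So it is enough to show two things: each such extension embeds into $\FF^aw^a$, and conversely the separable part of $\FF^aw^a|\FF w$ is $\ell$-elementary abelian.

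For 1), take $y\in(\FF^h)^\times$. Since $\FF$ is dense in $\FF^h$ for the $w$-topology, we can pick $x\in\FF^\times$ with $w(y/x-1)>0$, so that $y/x\in 1+\eum_{w^h}$. Then $y/x$ is an $\ell$-th power in $\FF^h$ by Hensel's lemma applied to $T^\ell-y/x$, since its reduction $T^\ell-1$ is separable when $\chr(\FF w)\neq\ell$. If $\chr(\FF w)=\ell$ this fails, and one must instead approximate $y$ to within $w(\ell^{2})$ or so. There Newton's form of Hensel's lemma for $T^\ell-u$, with $u\equiv1$ mod $\ell^2\eum$, still gives an $\ell$-th root, and density again supplies $x$. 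This yields $\FF^h\FF^a\supseteq(\FF^h)^a$; the reverse inclusion is clear since $\FF^h\FF^a|\FF^h$ is $\ell$-elementary abelian. I will treat the residue characteristic $\ell$ case as the main obstacle: the approximation must be sharp enough, using $w(\ell)>0$ and the estimate $w((1+z)^{1/\ell}-1)$ via the binomial series.

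For 2), note first that the decomposition field of $w^a|w$ lies inside $\FF^h\FF^a$, and the residue field extension is unchanged by Henselization. So we may assume $\FF$ Henselian, and by 1) then $\FF^a=\FF[\root\ell\of{\FF^\times}]$. Given $\bar u\in(\FF w)^\times$, lift it to a unit $u\in\clO_w^\times$. The residue field of $\FF[\root\ell\of u]$ contains a root of $T^\ell-\bar u$, so $(\FF w)[\root\ell\of{\bar u}]\subset\FF^aw^a$; this gives $(\FF w)^a\subseteq$ the separable part. Conversely, the separable part of $\FF^aw^a|\FF w$ is Galois with group a quotient of $Z_{w^a|w}/T_{w^a|w}$, which is a quotient of $\GGa\FF$. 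Hence it is abelian of exponent $\ell$, and so it is contained in $(\FF w)^a$.
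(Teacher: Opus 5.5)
Your rank-one argument is exactly the paper's Step~1: reducing 1) via Kummer theory to $(F^h)^\times=F^\times\cdot((F^h)^\times)^\ell$, then approximation plus Hensel, with Newton's lemma and sharper approximation when $\mathrm{char}(Fw)=\ell$. The gap is the step ``$F$ is dense in $F^h$ for the $w$-topology''. This is false as soon as $w$ has a proper non-trivial coarsening $w_1$ (e.g.\ rank $\geq2$). The $w$-topology is then the $w_1$-topology, and the $w_1$-residue field of $F^h$ contains the Henselization of $Fw_1$ with respect to $w/w_1$. Hence any $y\in F^h$ with $w_1(y)=0$ whose $w_1$-residue lies outside $Fw_1$ satisfies $w_1(y-x)\leq0$ for all $x\in F$. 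That is why the paper does rank one first and then argues by induction on the rank and by passing to finitely generated subfields.

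If $\mathrm{char}(Fw)\neq\ell$, you can repair your argument for every $w$ without any of this. $F^h|F$ is immediate, so there is $x\in F$ with $w(y-x)>w(y)$, i.e.\ $y/x\in1+\mathfrak{m}$, and that is all your Hensel step uses. If $\mathrm{char}(Fw)=\ell$, the precision $w(y/x-1)>2w(\ell)$ you need is genuinely unavailable in higher rank. In fact 1) itself can fail there, so neither your argument nor the paper's rank induction (whose inductive step is only asserted ``by functoriality'') covers this case. Here is an example:
\begin{itemize}
\item[] Let $F=\mathbb{Q}(\zeta_\ell)(x)$ and $\pi=\zeta_\ell-1$. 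Let $w_1$ be the Gauss extension of the $\pi$-adic valuation, so $Fw_1=\mathbb{F}_\ell(\bar x)$ and $w_1F=\mathbb{Z}\,w_1(\pi)$. Let $w=\bar w\circ w_1$ with $\bar w$ the $\bar x$-adic valuation.
\item[] Hensel gives $z\in F^h$ with $z^\ell-z+x=0$ and $w(z)>0$. Its $w_1$-residue $\tilde z$ is a root of the irreducible polynomial $T^\ell-T+\bar x$, so $\tilde z\notin\mathbb{F}_\ell(\bar x)$.
\item[] Suppose $1+z=fg^\ell$ with $f\in F^\times$, $g\in(F^h)^\times$. Since $w_1F^h=w_1F$, you may take $w_1(f)=w_1(g)=0$.
\item[] Reduction gives $(1+\tilde z)^\ell+\bar x=1+\tilde z=\tilde f\,\tilde g^\ell$. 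Here $\tilde f\in\mathbb{F}_\ell(\bar x)$, and $\tilde g$ lies in $F^hw_1$, a separable algebraic extension of $\mathbb{F}_\ell(\bar x)$.
\item[] Compare coordinates in the basis $1,\bar x,\dots,\bar x^{\ell-1}$ of $F^hw_1$ over $(F^hw_1)^\ell$. This forces $\tilde g\in\mathbb{F}_\ell(\bar x)$ and then $\tilde z\in\mathbb{F}_\ell(\bar x)$, a contradiction.
\end{itemize}
Thus $F^h(\sqrt[\ell]{1+z})\not\subseteq F^hF^a$. What can actually be proved for 1) is the rank-one case and the case $\mathrm{char}(Fw)\neq\ell$.

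Second, you use Kummer theory throughout. So the case $\mathrm{char}(F)=\ell$, which the statement allows and where $F^a$ is generated by Artin--Schreier extensions (the paper's Case~a)), is not treated. In 2) you also silently assume $\mathrm{char}(Fw)\neq\ell$ when you write $\mu_\ell\subset Fw$. If $\mathrm{char}(Fw)=\ell$, then $(Fw)^a$ is generated by roots of $T^\ell-T=\bar a$. The missing idea is the paper's Case~c), which produces these as residues of Kummer generators:
\begin{itemize}
\item[] With $\pi=\zeta_\ell-1$, lift $\bar a$ to $a\in\mathcal{O}_w$ and take $\beta\in F^a$ with $\beta^\ell=1+\pi^\ell a$.
\item[] Write $\beta=1+\pi u_0$ and $\ell=\pi^{\ell-1}\epsilon$ with $\epsilon\equiv-1\bmod\pi$. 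Then $u_0^\ell+\epsilon u_0+\sum_{1<i<\ell}\tfrac1\ell\binom{\ell}{i}\epsilon\pi^{i-1}u_0^i=a$.
\item[] Hence $u_0$ is $w^a$-integral and $\bar u_0^\ell-\bar u_0=\bar a$.
\end{itemize}
In equal characteristic $\ell$ you instead lift the Artin--Schreier equation directly.

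Finally, 2) needs neither Henselization nor 1). The Kummer lift, this lift and the Artin--Schreier lift all work over $F$ itself for any $w$. Your reverse inclusion works as it stands, since $\mathrm{Aut}(F^aw^a|Fw)\cong Z_{w^a|w}/T_{w^a|w}$ is a subquotient (not a quotient) of the elementary abelian $\ell$-group $\mathcal{G}^a_F$. So drop the reduction to Henselian $F$ in 2); otherwise 2) inherits the problem with 1) described above.
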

\begin{proof} We prove the assertion along the following two
reductions steps:
\vskip2pt
\noindent
\underbar{Step 1}. The valuation $w$ has rank one. In particular,
$\FF$ is dense in $\FF^h$.
\vskip2pt
Case a): $\ell=\chr(\FF).$ \ Then the $\ell$-elementary abelian extension
of both $\FF$ and $\FF^h$ are composita of $\ell$-cyclic extensions, 
all of which being Artin-Schreier extensions. Let $\FF^h(x')|\FF^h$
with $x'^\ell-x'=a'$ and $a'\in\FF^h$ by such an extension. Since $v$ has 
rank~$1$, hence $\FF$ is dense in $\FF^h$, one can choose $a\in \FF$ 
such that $v^h(a'-a)>0$. Then setting $a''=a'-a\in\FF^h$, or equivalently,
$a'=a''+a$, one has: First, the Artin-Schreier equation $T^\ell-T=a''$ 
has a solution in $x''\in\FF^h$ (\ha1because $v(a'')=v(a'-a)>0\ha1$). 
Second, $x'$ is a solution of $T^\ell-T=a''+a$ (by the additivity of $T^\ell-T$). 
Hence we conclude that 
$\FF^h(x')\subset\FF^h(x''+x)\subset\FF(x''\!,x)\subset\FF^h\FF^a$. 
\vskip0pt
Similarly, if $T^\ell-T=\oli a$ is an Artin-Schreier equations over $\FF w$, 
and $a\in\clO_w$ is a representative of $\oli a\in\FF w$ and $x$ is a 
solution of the equation $T^\ell-T=a$, it follows that the reduction $\oli x$ 
of $x$ is a root of $T^\ell-T=\oli a$.
\vskip2pt
Case b): $\ell\neq\chr(\FF w).$ \ Proceed as above, but using Kummer
type equations $T^\ell=a$, etc.
\vskip2pt
Case c): $\chr(\FF)=0$, $\chr(\FF w)=\ell$. \ Assertion~1) follows in the 
same way as in Case~b). For assertion~2), recall that $(Fw)^a|Fw$ 
is generated by the solutions of Artin--Schreier equations of the form 
$\wp(\oli u_0)=\oli a$ with $\oli a\in Fw.$ For a preimage $a\!\in\!\FF$ 
of $\oli a\!\in\!\FF w$, set $a=(b-1)/\pi^\ell\!$, where $\pi\!:=\zeta_\ell-1$ 
with $\zeta_\ell\in\mu_\ell$ a primitive root of unity. Recall that 
$\ell=\pi^{\ell-1}\epsilon$ in $\lvZ[\mu_\ell]$ 
with $\epsilon\in1+\pi\lvZ[\mu_\ell]\subset\clO_v^\times$ a principal
$v$-unit. Then the Kummer equation $(T+1)^\ell=b$ has roots 
$u\in\FF^a$ for each $b\in\FF$, that is
$u^\ell +\sum_{\ell>i>0}{\ell\choose i} u^i+1=b.$ Thus diving this 
equation by $\pi^\ell=\ell\pi\epsilon$ and setting $u=u_0\pi$, the 
equation satisfied by $u_0$ is:
\[\textstyle
u_0^\ell+\epsilon^{-1}\sum_{\ell>i>0}
    {1\over\ell}{\ell\choose i}\pi^{i-1}u_0^i=(b-1)/\pi^\ell\,.
\leqno{\indent(u_0)}
\]
On the other hand, the equation $(u_0)$ specializes 
to $\wp(\oli u_0)=\oli a$, that is $\oli u_0\in\FF^aw^a$, as claimed. 
\vskip4pt
\noindent
\underbar{Step 2}. The valuation $w$ has finite rank $d=\rk(v)=\Krdim(\clO_v)<\infty$.
We make induction on $d$. Namely, let $w_1\leqslant w$ be the minimal 
non-trivial coarsening of $w$, and $w_0=w/w_1$ the resulting valuation 
of the residue field $\FF_0=\FF w_1$. Then $w_1$ has rank one and $w_0$ 
has rank $d-1<d$. Hence by the induction hypothesis and Step~1, the
assertions~1),~2) hold for both $w_1$ and $w_0$. From this instantly
follows the same for $w$ (by the fuctoriality of Hilbert Decomposition 
for valuations).
\vskip4pt
\noindent
\underbar{Step 3}. \ Let $\FF=\cup_\alpha\FF_\alpha$ be the inductive
union of its finitely generated subfields with $\mu_\ell\subset\FF_\alpha$
provided $\chr(\FF)\neq\ell$. Then considering $\FF^a_\alpha|\FF_\alpha$, 
it follows that $\FF^a=\cup_\alpha\FF^a_\alpha$ and the extension of 
valued fields $\FF^a|\FF,w^a|w$ is the inductive limit of the system of
valued fields $\FF^a_\alpha|\FF_\alpha,w^a_\alpha|w_\alpha$. And since 
$\FF^h=\cup_\alpha F^h_\alpha$ and $\FF^a w^a=\cup_\alpha
\FF^a_\alpha w^a_\alpha$, by mere definitions one has that assertions~1),~2)
from the Fact hold iff they hold for each $\FF^a_\alpha|\FF_\alpha$
endowed with $w^a_\alpha|w_\alpha$. 
\vskip0pt
On the other hand, since $\FF_\alpha$ is finitely generated, the valuation 
$w_\alpha$ has finite rank (bounded by the Kronecker dimension of $\FF_\alpha$). 
Hence assertions 1),~2) hold for each $\FF^a_\alpha|\FF_\alpha,\,w^a_\alpha|w_\alpha$ 
by the discussion at Step~1. Hence conclude that assertions~1),~2) hold 
for $\FF^a|\FF, w^a|w$.
\end{proof}
Recall that via the canonical exact sequence 
$1\to \GGa\FF\hor{\bm\imath} G(\FF^a|\EE)\hor{p_\EE} G(\FF|\EE)\to 1$ 
the group $\Gal(\FF|\EE)$ acts {\it canonically} by conjugation 
on subsets $\Sigma$ of the three groups above, say 
\vskip5pt
\centerline{$\gx(\Sigma)\!:=\gx\,\Sigma\,\gx^{-1}$ \ \ 
for \ $\gx\in G(\FF|\EE)$ and
$\Sigma\subset G(\FF^a|\FF),\ G(\FF^a|\EE),\ G(\FF|\EE)$,}
\vskip5pt
\noindent
compatibly with the morphisms $\bm\imath,p_\EE$. We fix the 
above notation for this action throughout.
\vskip5pt
Let $\clV\subset\Val{}{(\EE)}$ be a non-empty set. For $\vv\in\clV$, 
let $\ww^a|\ww|\vv$ be the prolongations of $\vv\in\clV$ to 
$\FF^a|\FF|\EE$, and $\clV_{\vv}(\FF)\subset\clV(\FF)$ denote 
the subset of prolongations of $\vv\in\clV$ and of $\clV$ to 
$\FF$. And to fix notation, recall that $G(\FF|\EE)$ acts on 
$\clV_\vv(\FF)$ by $\gx(w)\!:=w\circ\gx^{-1}=:w^\gx\!.$ 
\vskip5pt
By Hilbert decomposition theory for valuations, one has:
Since $\Gal(\FF^a|\FF)$ is abelian, it follows that 
$T_{\ww}^a\!:=T_{\ww^a|\ww}\leqslant Z_{\ww^a|\ww}=:\!Z_{\ww}^a$ 
and $T_{\ww^a|\vv}\leqslant Z_{\ww^a|\vv}$ depend on $\ww$ 
only and not on the concrete prolongation~$\ww^a|\ww$. 
And for $\ww\in\clV_{\vv}(\FF)$, $\gx\in\Gal(\FF|\EE)$ one has: 
\vskip5pt
\centerline{$Z^a_{\gx(\ww)}=\gx\, Z_{\ww}^a\,\gx^{-1}=\gx(Z^a_w)$ 
\ and \ $Z_{\gx(w)|\vv}=\gx\, Z_{\ww|\vv}\,\gx^{-1}=\gx(Z_{w|v})$.}
\begin{definition/remark} 
%
%
In the above context and notation consider/define:
\label{indep}
$\ha0$
\vskip0pt
\itm{25}{
\item[1)] Let $Z^a_{\ww}\neq1$ for some $\ww\in\clV_{\vv}(\FF).$ 
Then $Z^a_{\ww'}\neq1$ for all $\ww'\in\clV_{\vv}(\FF)$, and 
we indicate this by writing $\clZ^a_{\vv}(\FF)\neq1$. And if 
$\clZ^a_{\vv}(\FF)\neq1$ for all $\vv\in\clV$, we write 
$\clZ^a_{\clV}(\FF)\neq1$. 
\vskip2pt
\item[2)] (See/compare with\ha3\cite{P1}, Def 5.) We say 
that $\vv\in\clV$ equals its {\it $\FF^a|\FF$-\ha1core\/} if for any 
proper coarsening $v_1<v$ and $w_1\in\Val_{v_1}(\FF)\!$ one
has $Z^a_{w/w_1}\neq1.$ We notice the following:
\itm{20}{
\vskip2pt 
\item[a)] Since $Z^a_{w/w_1}=Z^a_w/T^a_{w_1}$, one has:
$Z^a_{w/w_1}\neq1$ iff $T^a_{w_1}< Z^a_w$ strictly. 
\vskip2pt
\item[b)] For every $v\in\Val{}(\FF)$ there is a 
valuation $v^0\in\Val{}(\FF)$ which is maximal with the properties: 
$v^0\leqslant v$ and $v^0$ equals its $\FF^a|\FF$-core.
}
\vskip2pt
\item[3)] We say that $\clV$ equals its $F^a|F$-core if each 
$v\in\clV$ does so. For instance, this is the case if all $v\in\clV$ 
have rank one and $Z^a_w\neq1$ for $w\in\clV_v$.
}
\end{definition/remark}
In the above context and notation, let 
$v_i\in\Val{}(\EE)$, $i=1,2$ be given, and $w^a_i|w_i|v_i$ be 
prolongations of $v_i$ to $\FF^a|\FF|\EE.$ 
Setting $v=\min(v_1,v_2)$ and $w=\min(w_1,w_2)$, it follows 
that $w|v$ prolongs $v$ to $\FF|\EE$, and 
setting $\oli v_i\!:=v_i/v$, $\oli w_i\!:=w_i/w$, $\oli w^a_i\!:=w^a_i/w$, 
one has: $\oli w_i^a|\oli w_i|\oli v_i$ prolong $\oli v_i$
to $\FF^a w^a_i|\FF w_i|\EE v_i$ and further, $v_i=\oli v_i\circ v$, 
$w_i=\oli w_i\circ w\!$, $w^a_i=\oli w^a_i\circ w^a\!$ for $i=1,2$. 
And for $i=1\vrg2$, one has a commutative diagram of exact sequences:
\[
\begin{matrix}
1\,\,\to& T_{w^a|v}&\hor{} &Z_{w^a|v}&\hor\pr& G_{w^a|v}&\to\,\,1&\cr
          &\big|\hb2\big| &&\uparrow&         &\uparrow&          &\cr
1\,\,\to& T_{w^a|v}&\hor{} &Z_{w^a_i|v_i}&\hor{\pr}&Z_{\oli w^a_i|\oli v_i}&\to\,\,1&
\end{matrix}
\leqno{\indent(\dag)}
\] 
\begin{proposition}
\label{fkt01}
%
%
In the above notation, suppose that any two distinct valuations 
$v_1,v_2\in\clV$ are not comparable, $\clV$ equals its $\FF^a|\FF$-core, 
and $\clZ^a_{\clV}(\FF)\neq1.$ Then for any valuations 
$v,\, v_1,\,v_2\in\clV$ and $w\in\clV_v(\FF)$, $w_i\in\clV_{v_i}(\FF)$,
$w^a_i\in\clV_{v_i}(\FF^a)$, $i=1,2$, the following hold:
\vskip2pt
\itm{25}{
\item[{\rm1)}] Suppose that $w_1\neq w_2.$ Then $w_1,w_2$ are 
not comparable, and setting $w\!:=\min(w_1,w_2)$, one has:
$Z^a_{w_1}\cap Z^a_{w_2}=T^a_{w}$, and in
particular, $Z^a_{w_1}\neq Z^a_{w_2}$. Therefore,
\vskip5pt
\centerline{$\clV_{v}(\FF)\to\clZ^a_{v}(\FF)$, $w\mapsto Z^a_{w}$ is an 
isomorphism of topological $\Gal(\FF|\EE)$-spaces.}
\vskip5pt
\item[{\rm2)}] For $\gx\in\Gal(\FF|\EE)$ one has: \ $\gx\in Z_{w|v}$ \ iff 
\ $\gx(Z^a_{w})=Z^a_{w}$. Therefore,
\vskip5pt
\centerline{$\clZ^a_{\vv}(\FF):=\{Z^a_{\ww}|\,
\ww\in\clV_{\vv}(\FF)\} \to \{Z_{\ww|\vv}|\,\ww\in\clV_{\vv}(\FF)\}
=:\clZ_{\vv}(\FF)$, \ \ $Z^a_{\ww}\mapsto Z_{\ww|\vv}$.}
\vskip5pt
\item[] is a well defined surjective projection of 
topological $\Gal(\FF|\EE)$-spaces.
}
\noindent
Hence items \ {\rm 1), 2)} above give a group theoretical recipe 
to recover the $\Gal(\FF|\EE)$-space isomorphism
\[
\clV(\FF)\to \ZZ_{\clV}(\FF|\EE)\!:=\big\{(Z^a_{w},Z_{w|v})\,|\,v\in\clV(\EE),
w\in\clV_v(\FF)\big\}, \ \ w\mapsto(Z^a_{w},Z_{w|v})
\] 
from $\Gal(\FF^a|\EE)\to\Gal(\FF|\EE)$ endowed~with~$\clZ^a_{\clV}(\FF)$. 
\end{proposition}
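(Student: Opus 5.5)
The plan is to reduce everything to one statement: the intersection of two $\lvZ/\ell$-abelian decomposition groups equals the inertia group of the common coarsening. The key inputs are Fact~\ref{fkt0}, Fact~\ref{basic-fact}, and the exact sequences $(\dag)$. I do not attempt a fully explicit Kummer description of $Z^a_w$ over all of $\FF$, because that is delicate when $\chr(\FF w)=\ell$.

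\emph{Step 1 (incomparability).} Let $w_1\neq w_2$, with $w_i\in\clV_{v_i}(\FF)$. If $v_1=v_2$, then $w_1,w_2$ are distinct prolongations of one valuation to the algebraic extension $\FF|\EE$, hence incomparable. If $v_1\neq v_2$, they are incomparable by hypothesis. Any relation $w_1\leqslant w_2$ would restrict to $v_1\leqslant v_2$, so again $w_1,w_2$ are incomparable. Hence $w=\min(w_1,w_2)$ satisfies $w<w_1$ and $w<w_2$ strictly.

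Moreover $w|_\EE<v_i$ strictly. Indeed, $w|_\EE=v_1$ would force $v_1\leqslant v_2$, hence $v_1=v_2$. In that case $w$ and $w_1$ would both prolong $v_1$ with $w\leqslant w_1$, and prolongations of the same valuation to an algebraic extension are incomparable unless equal. So $w=w_1\leqslant w_2$, a contradiction.

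\emph{Step 2 (the intersection).} The inclusion $T^a_w\subseteq Z^a_{w_1}\cap Z^a_{w_2}$ is clear, since $T^a_w\lhd T^a_{w_i}\leqslant Z^a_{w_i}$ by the coarsening discussion. For the reverse inclusion, I pass through $(\dag)$ to the residue field $\FF w$. There $Z^a_{w_i}/T^a_w=Z^a_{\oli w_i}$ are the decomposition groups of the quotient valuations $\oli w_i=w_i/w$. By Fact~\ref{basic-fact}~2), these are decomposition groups inside $\Gal((\FF w)^a|\FF w)$, after modding out the purely inseparable part, which is harmless.

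Since $w=\min(w_1,w_2)$, the valuations $\oli w_1,\oli w_2$ of $\FF w$ are nontrivial and independent. It therefore suffices to show that independent valuations $\oli w_1,\oli w_2$ of a field $\FF_0$ have $Z^a_{\oli w_1}\cap Z^a_{\oli w_2}=1$. I would argue this as follows.
\begin{itemize}
\item By Fact~\ref{basic-fact}~1), $Z^a_{\oli w_i}=\Gal(\FF_0^a|\FF_0^a\cap\FF_0^{h_i})$. Thus an element fixing both decomposition fields fixes the compositum of $\FF_0^a\cap\FF_0^{h_1}$ and $\FF_0^a\cap\FF_0^{h_2}$.
\item Each $\lvZ/\ell$-cyclic extension of $\FF_0$ is generated by a Kummer or Artin--Schreier root of some $a\in\FF_0$.
\item Using the independence criterion (iii), $\FF_0^\times=U_1\cdot U_2$, applied to suitably small open sets, write $a=a_1a_2$ (respectively $a=a_1+a_2$ in the additive version) with $a_i$ close enough to an $\ell$-th power (respectively to a value of $\wp$) for $\oli w_i$ that the root of $a_i$ lies in $\FF_0^{h_i}$. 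This is the approximation step already used in the proof of Fact~\ref{basic-fact}.
\item Hence every cyclic subextension lies in that compositum, and the intersection of the two decomposition groups is trivial.
\end{itemize}

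\emph{Step 3 (consequences).} By Step~1, $w|_\EE$ is a proper coarsening of $v_1$, and $w$ is a prolongation of it. The core hypothesis together with Definition/Remark~\ref{indep}~2a) then gives $T^a_w<Z^a_{w_1}$ strictly, and by Step~2 we get $Z^a_{w_1}\cap Z^a_{w_2}=T^a_w$. Hence $Z^a_{w_1}\neq Z^a_{w_2}$. The hypothesis $\clZ^a_\clV(\FF)\neq1$ ensures the groups are nontrivial, so the map $w\mapsto Z^a_w$ is injective. This map is $\Gal(\FF|\EE)$-equivariant, by $Z^a_{\gx(w)}=\gx(Z^a_w)$, and continuous. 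Its source $\clV_v(\FF)$ is profinite, so the map is a homeomorphism onto its image, which gives 1).

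For 2), if $\gx\in Z_{w|v}$ then $\gx(w)=w$, so $\gx(Z^a_w)=Z^a_{\gx(w)}=Z^a_w$. Conversely, $\gx(Z^a_w)=Z^a_w$ gives $Z^a_{\gx(w)}=Z^a_w$, so $\gx(w)=w$ by the injectivity in 1). Hence $Z_{w|v}$ is the stabilizer of $Z^a_w$, and $Z^a_w\mapsto Z_{w|v}$ is a well-defined equivariant continuous surjection.

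The main obstacle is the residue-field triviality $Z^a_{\oli w_1}\cap Z^a_{\oli w_2}=1$ in mixed characteristic $(0,\ell)$. There one must choose the approximating sets $U_i$ small enough (inside $1+\ell\pi\eum$-type neighbourhoods) that Kummer roots become Henselian-rational. I would handle this by the rescaling trick with $u=u_0\pi$ from Case~c) of the proof of Fact~\ref{basic-fact}.
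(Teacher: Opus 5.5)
Your proof is correct and follows essentially the same route as the paper. Both arguments establish the incomparability of $w_1,w_2$, reduce via $(\dag)$ and Fact~\ref{basic-fact}~2) to the residue field $\FF w$, use the triviality of $Z^a_{\oli w_1}\cap Z^a_{\oli w_2}$ for the independent valuations $\oli w_1,\oli w_2$ (the paper's Lemma~\ref{indepval}), and then deduce 2) from the injectivity in 1).

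Your extra care is well placed in two places. First, you explicitly invoke the core hypothesis to get $T^a_w<Z^a_{w_1}$ strictly; this is what actually yields $Z^a_{w_1}\neq Z^a_{w_2}$, which the paper asserts without comment. Second, you shrink the neighbourhoods $U_i$ when the residue characteristic is $\ell$, where elements of $1+\eum$ need not be $\ell$-th powers in the Henselization; the paper's proof of Lemma~\ref{indepval} passes over this case.
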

\begin{proof} To 1): First, by contradiction, let $w_1,w_2\in\clV_{v_i}(\FF)$ be 
comparable, say $w_1<w_2.$ Then  $\FF|\EE$ being algebraic
implies $v_1<v_2$, contradiction! Hence $w<w_1,w_2$ strictly, 
and $\oli w_i=w_i/w$ are two independent valuations on the 
residue field $\FF w$. Further, by Fact~\ref{basic-fact}, 
$\FF^a w^a|\FF w$ is the maximal $\ell$-elementary abelian extension of 
$\FF w$, hence $G_{w^a|w}\!:=G(\FF^a w^a|\FF w)=G\big((\FF w)^a|\FF w\big)$. 
Further, by the commutative diagram~$(\dag)$ above,
$Z_{\oli w^a_i|\oli w_i}=Z_{w^a_i|w_i}/T_{w^a|w}\subset G_{w^a|w}$ 
is the decomposition group of $\oli w^a_i|\oli w_i$ in $G_{w^a|w}
=G\big((\FF w)^a|\FF w\big)$ for $i=1,2$. In particular, since 
$\oli w_1,\oli w_2$ are independent valuations of $\FF w$, it
follows that by Lemma~\ref{indepval} below that 
$Z_{\oli w^a_1|\oli w_1}\cap Z_{\oli w^a_2|\oli w_2}=1$.
Hence since $T_{w^a|w}=\ker(Z_{w^a_i|w_i}\to Z_{\oli w_i^a|\oli w_i}),$
finally getting $Z_{w^a_1|w_1}\cap Z_{w^a_2|w_2}=T_{w^a|w}.$
\vskip2pt
To 2): By mere definitions one has that $\sigma\in Z_{w|v}$ iff 
$w^\sigma=w$. First, for the direct implication, if $\sigma\in Z_{w|v}$, then 
$w=w^\sigma$, thus $Z^a_{w}=Z^a_{w\,^\sigma}=\sigma(Z^a_{w}).$
For the converse implication, suppose that $w_1\!:=w\neq w^\sigma=:\!w_2$. 
Then by assertion~1) above one has $Z^a_{w_1}\neq Z^a_{w_1},$
that is $Z^a_{w}\neq Z^a_{w^\sigma}.$ 
\vskip2pt
Finally, the last assertion is an immediate consequence 
of the discussion above. 
\end{proof}
%
%
\begin{lemma}
\label{indepval}
Let $\NN$ be a field with $\mu_\ell\subset\NN$ provided $\ell\neq\chr(\NN)\!$, and
$\GGa\NN\!:=G(\NN^a|\NN)$ be the Galois group of the maximal $\ell$-elementary
abelian extension $\NN^a|\NN.$ If $\euv_i\in\Val{}(\NN),$ $i=1,2$ are independent 
valuations, their decomposition groups $Z^a_{\euv_i}\subset\GGa\NN$ 
satisfy $Z^a_{\euv_1}\cap Z^a_{\euv_2}=1.$
\end{lemma}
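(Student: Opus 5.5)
We prove the lemma through Kummer (respectively Artin--Schreier) duality, reducing it to the approximation statement (iii) from the discussion of independent valuations: for non-empty $\euv_i$-open sets $U_i$ one has $\NN^\times\subset U_1\cdot U_2$, and dually $U_1-U_2=\NN$.

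First take the case $\ell\neq\chr(\NN)$. By Kummer theory $\GGa\NN=\Hom(\NN^\times/\ell,\mu_\ell)$. An element $\sigma\in\GGa\NN$ is trivial iff it kills every class of $\NN^\times/\ell$, i.e.\ iff $\sigma(\root\ell\of a)=\root\ell\of a$ for all $a\in\NN^\times$.

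The key step is the standard fact from Hilbert decomposition theory: a Kummer extension $\NN(\root\ell\of u)|\NN$ is split at $\euv_i$, i.e.\ $Z^a_{\euv_i}$ fixes $\root\ell\of u$, whenever $u$ is an $\ell$-th power in the $\euv_i$-Henselization $\NN^{h_i}$. Hence $Z^a_{\euv_i}$ annihilates the Kummer classes of the subgroup $\NN^\times\cap(\NN^{h_i})^{\times\ell}$. This subgroup contains a non-empty $\euv_i$-open set $U_i$. To see this, take $U_i=1+\ell^2\eum_{\euv_i}$ if the residue characteristic is $\ell$; or take $U_i=1+\eum_{\euv_i}$ otherwise, since then Hensel's lemma applies to $T^\ell-u$ because $\ell$ is a unit. (In both cases $U_i\subset(\NN^{h_i})^{\times\ell}$ by Hensel/Newton.)

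Now let $\sigma\in Z^a_{\euv_1}\cap Z^a_{\euv_2}$ and $a\in\NN^\times$. Since $\euv_1,\euv_2$ are independent, condition (iii) gives $a=u_1u_2$ with $u_i\in U_i$. As a character on $\NN^\times/\ell$, $\sigma$ vanishes on $u_1$ (because $\sigma\in Z^a_{\euv_1}$) and on $u_2$ (because $\sigma\in Z^a_{\euv_2}$), hence on $a$. Thus $\sigma=1$.

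The case $\ell=\chr(\NN)$ is the same argument in additive form. By Artin--Schreier theory $\GGa\NN=\Hom(\NN/\wp\NN,\lvZ/\ell)$. Every $a\in\eum_{\euv_i}$ lies in $\wp(\NN^{h_i})$ by Hensel's lemma, so $Z^a_{\euv_i}$ kills the classes of the open set $\eum_{\euv_i}$. Condition (ii), $\eum_{\euv_1}-\eum_{\euv_2}=\NN$, then gives triviality exactly as above.

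The main obstacle is justifying the splitting criterion, namely that the decomposition group is the Galois group of $\NN^a$ over $\NN^a\cap\NN^{h_i}$; together with Fact~\ref{basic-fact}(1) this shows that $Z^a_{\euv_i}$ corresponds exactly to $\Gal(\NN^{h_i}\NN^a|\NN^{h_i})$. For valuations of arbitrary rank with residue characteristic $\ell$, I would make sure the open set $1+\ell^2\eum_{\euv_i}$ is genuinely non-empty and open. This holds because $\euv_i$ is non-trivial (independence presupposes non-trivial topologies), so $\eum_{\euv_i}\neq0$ and the set is a basic neighborhood of $1$. Hensel's lemma is valid in any Henselian valued field, so no rank reduction is needed.
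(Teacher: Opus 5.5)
Your proof is correct and follows the paper's route. Hensel's lemma shows that $Z^a_{\euv_i}$ annihilates the Kummer (resp.\ Artin--Schreier) classes of a non-empty $\euv_i$-open set $U_i$, and independence gives $U_1\cdot U_2=\NN^\times$ (resp.\ $\eum_{\euv_1}-\eum_{\euv_2}=\NN$), so duality forces $Z^a_{\euv_1}\cap Z^a_{\euv_2}=1$.

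Your choice $U_i=1+\ell^2\eum_{\euv_i}$ when $\chr(\NN\euv_i)=\ell$ is more careful than the paper. The paper takes $U_i=1+\eum_{\euv_i}$ whenever $\chr(\NN)\neq\ell$ and asserts these are $\ell$-th powers in the Henselization, which is false in mixed characteristic: for $\euv$ the $\ell$-adic valuation, $\zeta_\ell\in1+\eum_\euv$ is not an $\ell$-th power in $\lvQ_\ell(\mu_\ell)$. Since the approximation property holds for arbitrary non-empty open sets, shrinking $U_i$ costs nothing and closes that gap.
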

\begin{proof} \ Set $U_i=1+\eum_{\euv_i},$ $i=1,2$. We analyze 
separately the cases:
\vskip5pt
\underline{Case 1}. \ $\chr(\NN)\neq\ell$. \ By Hensel Lemma, for all 
$u_i\in U_i$, $i=1,2$ one has: $T^\ell-u_i\in \NN[T]$ splits in linear 
factors over the Henselization of $\NN$ with respect to $\euv_i$. 
Therefore, $\euv_i$ is totally split in $\NN_i\!:=\NN[\root\ell\of U_i]$, 
and equivalently, $\NN_i$ is contained in the fixed field of 
$Z^a_{\euv_i}\subset\GGa\NN$ in $\NN^a\!.$ On the other 
hand, since $\euv_1,\euv_2$ are independent, one has 
$U_1\cdot U_2=\NN^\times\!,$ hence $\NN^a=\NN_1\NN_2$. 
Conclude by Kummer theory that $Z^a_{\euv_1}\cap Z^a_{\euv_2}=1$. 
\vskip2pt
\underline{Case 2}. \ $\chr(\NN)=\ell.$ Let $\wp_u^{-1}(0)$ denote 
the set of roots of the Artin--Schreier polynomial $\wp_u(T)=T^\ell-T-u.$
Then by Hensel Lemma, $\wp^{-1}_{u_i}(0)$ is contained in the 
decomposition field of $\euv_i$ in $\NN^a$ for any $u_i\in\eum_i$, 
$i=1,2.$ Hence so 
does $\NN_i\!:=\NN[\wp_{u_i}^{-1}(0)]_{u_i\in\eum_{\euv_i}}.$ On the 
other hand, since $\euv_1,\euv_2$ are independent, one has 
$\NN=U_1-U_2=\eum_{\euv_1}-\eum_{\euv_2}$, hence 
$\NN^a=\NN_1\NN_2$. Conclude by Artin--Schreier theory that 
$Z^a_{\euv_1}\cap Z^a_{\euv_2}=1$. 
\end{proof}
%
%
%
\subsection{\bf Commuting liftability} 
{ See \nmnm{Topaz}~\cite{To1} (and 
\nmnm{Pop}~\cite{P1}, section 3) for more details.\/}
\vskip5pt
Commuting liftability was developed in stages over several years
by several people, {\sc Ware, Jacob, Arason--Elman--Jacob, 
Bogomolov, Koenigsmann, Bogomo\-lov--Tschinkel}, 
culminating with contributions by\ha5{\sc Topaz},~\cite{To1}
(where more literature can be found). The essential property
and consequence of commuting liftability is that it relates
in an intimate way to (arithmetically significant) valuations of 
the fields in discussion, see Theorem~\ref{Topaz}~(after~\cite{To1}) 
below.   
\vskip5pt
Let $\FF$ be a field with $\chr(\FF)\neq\ell$,
$\mu_\ell\subset \FF$, and $\FF^a|\FF$ be the maximal
$\lvZ/\ell$ elementary abelian extension. For
a valuation $w$ of $\FF$, set $\FF^D\!:=\FF[\root\ell
\of{1+\eum_w}\,]$, $\FF^I:=\FF[\root\ell\of
{\clO^\times_w}\,]$. The groups $I_w\leqslant D_w$ 
below are called the {\it minimized inertia\/}$/\!${\it decomposition 
groups\/} of $w$:
\[
I_w:=\Gal(\FF^a|\FF^I)=\Hom(\FF^\times\!/\clO_w^\times,\mu_\ell)
\leqslant\Hom\big(\FF^\times\!/(1+\eum_w),\mu_\ell\big)
=\Gal(\FF^a|\FF^D)=:D_w.
\]

We notice that under valued field extensions the minimized 
inertia/decomposition groups behave as follows. Let 
$(\FF\!,w)\,|\,(\NN,\euw)$ is an extension of valued fields, 
thus $\clO_\euw^\times=\clO_w^\times\cap\NN$ 
and $1+\eum_\euw=(1+\eum_w)\cap\NN.$ Then supposing 
that $\mu_\ell\subset\NN$, by mere definitions one has:
\begin{fact}[{\bf Functoriality}]
\label{fkt3-4}
%
%
%
{\it The canonical projection $p^a:\GGa\FF\to\GGa\NN$ gives
rise canonically to embeddings $p^a(I_w)\subset I_\euw$ and
$p^a(D_w)\subset D_\euw$. 
Moreover, if $\clO_w^\times/\clO_\euw^\times$  and
$(1+\eum_w)^\times\!/(1+\eum_\euw)^\times$ have no $\ell$-torsion, 
then $p^a(I_w)=I_\euw$ and $p^a(D_w)=D_\euw.$ 
}
\end{fact}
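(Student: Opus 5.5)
The plan is to work entirely through Kummer theory, identifying each group with a group of homomorphisms into $\mu_\ell$. Since $\mu_\ell\subset\NN\subset\FF$, one has $\GGa\FF=\Hom(\FF^\times,\mu_\ell)$ and $\GGa\NN=\Hom(\NN^\times,\mu_\ell)$. Under these identifications the canonical projection $p^a$, which is restriction of automorphisms from $\FF^a$ to $\NN^a\subset\FF^a$, becomes restriction of homomorphisms along the inclusion $\NN^\times\hra\FF^\times$. I would first check this compatibility of the Kummer pairings: for $\sigma\in\GGa\FF$ and $x\in\NN^\times$, both $\sigma$ and $p^a(\sigma)$ send $\root\ell\of x\in\NN^a$ to the same element.

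Next, $I_w=\Hom(\FF^\times\!/\clO_w^\times,\mu_\ell)$ consists of the characters of $\FF^\times$ that are trivial on $\clO_w^\times$. Their restrictions to $\NN^\times$ are trivial on $\clO_w^\times\cap\NN=\clO_\euw^\times$, so they lie in $I_\euw$. This gives $p^a(I_w)\subset I_\euw$. The same argument with $1+\eum_w$ and $(1+\eum_w)\cap\NN=1+\eum_\euw$ gives $p^a(D_w)\subset D_\euw$.

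For equality I would use that $\mu_\ell$ is an injective $\lvZ/\ell$-module, i.e.\ $\Hom(-,\mu_\ell)$ is exact on $\ell$-torsion abelian groups. Write $H$ for $\clO_w^\times$ (respectively $1+\eum_w$) and $H_\NN=H\cap\NN^\times$. A character $\chi$ of $\NN^\times$ that is trivial on $H_\NN$ is a character of $\NN^\times/H_\NN$. Under the map
\[
\NN^\times/H_\NN\hra\FF^\times\!/H\srjr(\FF^\times\!/H)/\ell,
\]
we want $\chi$ to extend to a character of $\FF^\times\!/H$. Since characters into $\mu_\ell$ factor through reduction mod $\ell$, it suffices that the induced map $(\NN^\times/H_\NN)/\ell\to(\FF^\times\!/H)/\ell$ be injective, or at least that $\chi$ kill its kernel.

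This is the main step. Consider the exact sequence $1\to\NN^\times/H_\NN\to\FF^\times\!/H\to Q\to1$. The mod-$\ell$ map is injective iff no element of $\NN^\times/H_\NN$ outside its $\ell$-th powers becomes an $\ell$-th power in $\FF^\times\!/H$, which is controlled by the $\ell$-torsion of $Q$. Concretely, I would use the snake lemma for multiplication by $\ell$, which yields $Q[\ell]\to(\NN^\times/H_\NN)/\ell\to(\FF^\times\!/H)/\ell$. I expect the hypothesis "$\clO_w^\times/\clO_\euw^\times$ (resp.\ $(1+\eum_w)/(1+\eum_\euw)$) has no $\ell$-torsion" to be exactly what is needed to kill the relevant torsion. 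To make this precise I would relate it to $Q$ through the diagram with rows $H_\NN\to\NN^\times\to\NN^\times/H_\NN$ and $H\to\FF^\times\to\FF^\times/H$, in the spirit of the author's "by mere definitions."

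Alternatively, I would argue directly: given $\chi$ on $\NN^\times$ trivial on $H_\NN$, first extend it to $\NN^\times H$ by declaring it trivial on $H$. This is well defined because $\NN^\times\cap H=H_\NN$. Then extend to all of $\FF^\times$ by injectivity of $\mu_\ell$ as a divisible $\lvZ/\ell$-module, after passing to $\FF^\times/(\FF^\times)^\ell$. Here one must check that the character on $\NN^\times H$ kills $\NN^\times H\cap(\FF^\times)^\ell$, and this is the place where the no-$\ell$-torsion hypothesis enters. Once that check is done, the extension lies in $I_w$ (respectively $D_w$) and restricts to $\chi$, which proves surjectivity.
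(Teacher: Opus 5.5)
Your proof of the two inclusions is correct, and it is all that the paper's ``by mere definitions'' amounts to. Under Kummer duality, $p^a$ is restriction of characters along $N^\times\hookrightarrow F^\times$, and the identities $\mathcal O_w^\times\cap N=\mathcal O_{\mathfrak w}^\times$ and $(1+\mathfrak m_w)\cap N=1+\mathfrak m_{\mathfrak w}$ give both inclusions. The gap is in the equality part, exactly at the step you postpone (``this is the place where the no-$\ell$-torsion hypothesis enters''). That check cannot be carried out, because the stated hypothesis controls the wrong group. Take $H=\mathcal O_w^\times$ or $1+\mathfrak m_w$ and $H_N=H\cap N^\times$. Your own snake-lemma computation shows that the kernel of $(N^\times/H_N)/\ell\to(F^\times/H)/\ell$ is $\delta(Q[\ell])$ with $Q=F^\times/N^\times H$; in the inertia case $Q=wF/\mathfrak wN$. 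The hypothesis, however, is about $H/H_N$, which is the \emph{kernel} in the exact sequence $1\to H/H_N\to F^\times/N^\times\to Q\to 1$. Its $\ell$-torsion says nothing about $Q[\ell]$.

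In fact the ``moreover'' assertion is false as stated, so no argument can close this step, and the paper's ``by mere definitions'' does not close it either. Take $k$ algebraically closed with $\mathrm{char}(k)\neq\ell$, $N=k(t)$, $F=N(s)$ with $s^\ell=t$, $\mathfrak w$ the $t$-adic valuation and $w$ its unique prolongation. By Kummer theory every $u\in F^\times$ with $u^\ell\in N$ has the form $cs^j$ with $c\in N^\times$. The condition $w(u)=0$ forces $\ell\mid j$, so $u\in N$. Hence $\mathcal O_w^\times/\mathcal O_{\mathfrak w}^\times$ and $(1+\mathfrak m_w)/(1+\mathfrak m_{\mathfrak w})$ have no $\ell$-torsion, and the hypotheses hold. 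Yet every character of $F^\times$ into $\mu_\ell$ kills $t=s^\ell$. So the character $x\mapsto\zeta_\ell^{\mathfrak w(x)}$, which generates $I_{\mathfrak w}=D_{\mathfrak w}\cong\mathbb Z/\ell$, is not a restriction, and in fact $p^a(I_w)=p^a(D_w)=1$.

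What your argument actually needs is that $Q$ has no $\ell$-torsion, i.e.\ that $wF/\mathfrak wN$, resp.\ $F^\times/N^\times(1+\mathfrak m_w)$, is $\ell$-torsion free. The exact condition is $N^\times\cap F^{\times\ell}H=N^{\times\ell}H_N$. Under that hypothesis, your snake lemma together with the injectivity of $\mu_\ell$ as a $\mathbb Z/\ell$-module finishes the proof. The paper only ever invokes the inclusions (e.g.\ in the proof of Key Lemma~\ref{key-lemma}), so nothing downstream depends on the faulty equality.
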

%
%
\begin{fact} [{\bf Basics}]
\label{fkt4}
{\it In the above notation, the following hold:
\vskip2pt
\itm{25}{
\item[{\rm1)}] $I_w\cong\Hom(w\FF/\ell,\mu_\ell)$ 
and $D_w/I_w\cong\Hom(\FF w\ha1^\times\hb3/\hb1\ell,\mu_\ell)$.
Hence one has:
\vskip5pt
\centerline{$\,I_w=1\,$ iff $\,w\FF$ is $\ell$-divisible, and 
$\,I_w=D_w\,$ iff $\,\FF w\ha1^\times$ is $\ell$-divisible.}
\vskip2pt
\item[{\rm 2)}] If $\,\chr(\FF w)\!\neq\!\ell$, then 
$T^a_w\hb2=\!I_v\subset D_w\hb2=Z^a_w$. Further,
$(\FF w)^a\!=\FF^a w^a\!$, thus $\,\GGa{\FFw}=Z^a_w/T^a_w$.
\vskip2pt
\item[{\rm 3)}] If $\,\chr(\FF w)\!=\!\ell$, then 
$\,I_w\!\subset\! T^a_w\,$ and $\,D_w\hb3\subset\! Z^a_w$.
}}
\end{fact}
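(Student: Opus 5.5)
The plan is to work entirely on the Kummer side. Since $\mu_\ell\subset\FF$ and $\chr(\FF)\neq\ell$, one has $\GGa\FF=\Hom(\FF^\times\!/\ell,\mu_\ell)$. A subextension $\FF[\root\ell\of B\,]$ with $B\subset\FF^\times$ a subgroup has as Galois group the annihilator of the image of $B$ in $\FF^\times\!/\ell$. Moreover $\Hom(-,\mu_\ell)$ is exact on $\lvZ/\ell$-vector spaces. Thus every assertion reduces to identifying subgroups of $\FF^\times\!/\ell$.

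For 1), $I_w$ is the dual of $\FF^\times\!/\clO_w^\times\FF^{\times\ell}=w\FF/\ell$. For $D_w/I_w$ I would use the exact sequence
\[
0\to\clO_w^\times/(1+\eum_w)\clO_w^{\times\ell}\to\FF^\times\!/(1+\eum_w)\FF^{\times\ell}\to\FF^\times\!/\clO_w^\times\FF^{\times\ell}\to0 .
\]
The only point to check is injectivity on the left. This amounts to $(1+\eum_w)\FF^{\times\ell}\cap\clO_w^\times=(1+\eum_w)\clO_w^{\times\ell}$. It holds because if $u\,a^\ell$ is a unit with $u\in1+\eum_w$, then $\ell\, w(a)=0$, so $a$ is a unit. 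The left term is $\FF w^\times\!/\ell$ via reduction. Dualizing gives $D_w/I_w\cong\Hom(\FF w^\times\!/\ell,\mu_\ell)$, and the two divisibility criteria follow at once.

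For 2) and 3) I would describe $Z^a_w$ and $T^a_w$ dually as well. The fixed field of $Z^a_w$ is $\FF^a\cap\FF^h$, so $Z^a_w$ is the annihilator of $\ker(\FF^\times\!/\ell\to\FF^{h\times}\!/\ell)$. Likewise $T^a_w$ is the annihilator of $\ker(\FF^\times\!/\ell\to\FF^{sh\times}\!/\ell)$, with $\FF^{sh}$ the strict henselization; here I use that the inertia field is $\FF^a\cap\FF^{sh}$. Hence $D_w\subset Z^a_w$ amounts to the inclusion $\ker(\FF^\times\!/\ell\to\FF^{h\times}\!/\ell)\subset(1+\eum_w)\FF^{\times\ell}/\FF^{\times\ell}$. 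To prove it, let $a=b^\ell$ with $b\in\FF^h$. Then $w(a)\in\ell\, w\FF$, since $\FF^h$ has the same value group. After multiplying by an $\ell$-th power I may assume $a$ is a unit. Its residue is an $\ell$-th power in $\FF^h w=\FF w$, so $a=c^\ell u$ with $c\in\clO_w^\times$ and $u\in1+\eum_w$. The same argument with $\FF^{sh}$, whose value group is still $w\FF$, gives $a\in\clO_w^\times\FF^{\times\ell}$, hence $I_w\subset T^a_w$. This argument works in every residue characteristic, and so proves 3).

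For 2), assume $\chr(\FF w)\neq\ell$. I need the reverse inclusions. By Hensel's Lemma, $T^\ell-u$ splits over $\FF^h$ for $u\in1+\eum_w$, so $\FF^D\subset\FF^a\cap\FF^h$; that is, $Z^a_w\subset D_w$. Also $\FF[\root\ell\of{\clO_w^\times}\,]$ is unramified because $\ell\neq\chr(\FF w)$, so it lies in the inertia field; that is, $T^a_w\subset I_w$. For the equality $T^a_w=I_w$ I would compare quotients. By Fact~\ref{basic-fact}~2) and Hilbert theory, $Z^a_w/T^a_w=G(\FF^aw^a|\FF w)=\GGa{\FFw}$. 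By 1), $D_w/I_w\cong\Hom(\FF w^\times\!/\ell,\mu_\ell)=\GGa{\FFw}$. The map between these quotients is compatible with reduction, so $T^a_w\subset I_w$ together with $Z^a_w=D_w$ forces $T^a_w=I_w$. The identity $(\FF w)^a=\FF^aw^a$ is exactly Fact~\ref{basic-fact}~2), since the residue extension is separable here.

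I expect the main obstacle to be this last comparison. One must make sure that the identification $Z^a_w/T^a_w\cong\GGa{\FFw}$ coming from Hilbert theory coincides with the Kummer-theoretic identification $D_w/I_w\cong\Hom(\FF w^\times\!/\ell,\mu_\ell)$ given by reduction of units. I would verify this directly: for $\sigma\in Z^a_w$ and a unit $u$, the reduction of $\sigma(\root\ell\of u)/\root\ell\of u$ equals the action of the induced residue automorphism on $\root\ell\of{\oli u}$.
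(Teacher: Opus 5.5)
Your proposal is correct and follows the paper's route. You apply Kummer duality $\GGa\FF=\Hom(\FF^\times\!/\ell,\mu_\ell)$ to the two exact sequences for $w\FF$ and $\FF w^\times$, and your check that $(1+\eum_w)\FF^{\times\ell}\cap\clO_w^\times=(1+\eum_w)\clO_w^{\times\ell}$, together with the henselization/inertia-field description of $Z^a_w$ and $T^a_w$, fills in what the paper calls ``mere definitions''. The quotient comparison you flag as the main obstacle is unnecessary: your argument for 3) already gives $I_w\subset T^a_w$ and $D_w\subset Z^a_w$ in every residue characteristic, so combined with the Hensel inclusions $T^a_w\subset I_w$ and $Z^a_w\subset D_w$ it yields both equalities in 2) at once, and $(\FF w)^a=\FF^aw^a$ then needs only Fact~\ref{basic-fact}~2).
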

\begin{proof} Everything follows by mere definitions, 
Pontryagin duality, and Kummer~theory from the exact 
sequences $1\to\clO_w^\times\to \FF^\times\!\to w\FF\to0$ 
and $1\to(1+\eum_w)^\times\!\to\clO_w^\times\to \FF w^\times\to1$.
\end{proof} 
%
%
\begin{fact}
\label{red-inr-dec-coars}
{\it In the above context, let $w_1\leqslant w_2$ in $\Val(FF).$  
Then $I_{w_1}\subset I_{w_2}$ and $D_{w_1}\supset D_{w_2}$.} 
\end{fact}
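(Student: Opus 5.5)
The plan is to work straight from the definitions of the minimized groups,
\[
I_w=\Hom(\FF^\times\!/\clO_w^\times,\mu_\ell),\qquad D_w=\Hom\big(\FF^\times\!/(1+\eum_w),\mu_\ell\big),
\]
viewed inside $\GGa\FF=\Hom(\FF^\times\!,\mu_\ell)$ via Kummer theory. Under this identification, a subgroup of the form $\Hom(\FF^\times\!/S,\mu_\ell)$ consists of the characters $\chi:\FF^\times\to\mu_\ell$ that vanish on $S$. Hence if $S\subset S'$, every character killing $S'$ also kills $S$. This gives the inclusion-reversing rule
\[
S\subset S'\ \Longrightarrow\ \Hom(\FF^\times\!/S',\mu_\ell)\subset\Hom(\FF^\times\!/S,\mu_\ell),
\]
equivalently, on the field side, $\FF[\root\ell\of S]\subset\FF[\root\ell\of{S'}]$, so the corresponding Galois groups are reversed.

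The assertion therefore reduces to two containments of subgroups of $\FF^\times$ that come from $w_1\leqslant w_2$. By the definition of coarsening we have $\clO_{w_1}\supset\clO_{w_2}$ and $\eum_{w_1}\subset\eum_{w_2}$.

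First, we need $\clO_{w_2}^\times\subset\clO_{w_1}^\times$. Take a unit $u$ of $\clO_{w_2}$. Then $u$ and $u^{-1}$ lie in $\clO_{w_2}\subset\clO_{w_1}$, so $u$ is a unit of $\clO_{w_1}$. Applying the reversal rule with $S=\clO_{w_2}^\times$ and $S'=\clO_{w_1}^\times$ gives $I_{w_1}\subset I_{w_2}$.

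Second, $\eum_{w_1}\subset\eum_{w_2}$ immediately gives $1+\eum_{w_1}\subset1+\eum_{w_2}$. Applying the reversal rule with $S=1+\eum_{w_1}$ and $S'=1+\eum_{w_2}$ gives $D_{w_2}\subset D_{w_1}$.

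There is no real obstacle here. The only point needing care is the direction of the inclusions. Passing to the coarser valuation $w_1$ enlarges the unit group, which shrinks the inertia group $I_{w_1}$. It also shrinks the group of principal units $1+\eum_{w_1}$, which enlarges the decomposition group $D_{w_1}$. This is consistent with the classical picture, where $T_{w_1}\subset T_{w_2}$ and $Z_{w_2}\subset Z_{w_1}$.
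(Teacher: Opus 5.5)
Your proof is correct and matches the paper's. The paper likewise observes that $w_1\leqslant w_2$ gives $\clO^\times_{w_1}\supset\clO^\times_{w_2}$ and $1+\eum_{w_1}\subset 1+\eum_{w_2}$, and then reads off $I_{w_1}\subset I_{w_2}$ and $D_{w_1}\supset D_{w_2}$ from the inclusion-reversing Kummer-dual definitions; you just spell out the duality step and the unit-group containment in more detail.
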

\begin{proof} This is obvious, because $w_1\leqslant w_2$ iff the 
following equivalent conditions are satisfied:
\vskip2pt
(i) $\,\clO_{w_1}\supset\clO_{w_2}$; \ \ \ \ (i)$'$ 
$\,\eum_{w_1}\subset\eum_{w_2}$; \ \ \ \ (ii) $\,\clO^\times_{w_1}\supset 
\clO^\times_{w_2}$; \ \ \ \ (ii)$'$ $\,1+\eum_{w_1}\subset1+\eum_{w_2}.$
\end{proof}
\vskip5pt
In the above notation, recall the canonical exact sequence
$1\to\Delta_\FF\to\GGc\FF\to\GGa\FF\to 1.$ For $\sigma\in\GGa\FF$ 
and $\Sigma\subset\GGa\FF$, denote by $\tlsgm\in\GGc\FF$ 
preimages of $\sigma$ and by $\Sigma^c\subset\GGc\FF$ 
the preimage of $\Sigma$. Recall the following canonical maps 
in this context, see e.g.~\cite{To1} for these basic facts:
\vskip2pt
\begin{itemize}[leftmargin=25pt]
\item[-$\ha2$] The bilinear map 
$\psi:\GGa\FF\times\GGa\FF\to\GGac_\FF$, defined 
by $(\sigma,\tau)\mapsto[\tlsgm\!,\tltau]$.
\vskip2pt
\item[-$\ha2$] The linear map $\beta:\GGa\FF\to\GGac_\FF$,
$\sigma\mapsto\sigma^\beta:=(\tlsgm)^\ell\!$.
\end{itemize}
This being said, we recall basics about {\it commuting liftability,\/} 
see \nmnm{Topaz}~\cite{To1} for details. 
%
%
\begin{definition/remarks}
\label{defrem1}
A pair of elements $\sigma,\tau\in\GGa\FF$ is {\it independent,\/} 
if $\sbgg{\sigma,\tau}\cong(\lvZ/\ell\ha1)^2$. Given independent 
$\sigma,\tau\in\GGa\FF$ and liftings $\sigma^c\!,\tau^c\in\GGc\FF\!$, 
we say that:
\vskip2pt
\begin{itemize}[leftmargin=25pt]
\item[1)] $\sigma,\tau$ are {\it commuting liftable\/} (c.l.) if 
$\sigma,\tau$ satisfy the equivalent conditions: 
\vskip4pt
\centerline{(i) $\exists$ $\tlsgm\!,\tltau$ such that 
$[\tlsgm,\tltau]\in\langle\sigma^\beta\!,\tau^\beta\rangle$; \
(ii) $\,\forall$ $\tlsgm\!,\tltau$ one has 
$[\tlsgm,\tltau]\in\langle\sigma^\beta\!,\tau^\beta\rangle$.}
\vskip4pt
\item[2)] $\sigma,\tau\in\GGa\FF$ is called {\it c.l.\ pair,\/} 
if $\sigma,\tau$ satisfy the equivalent conditions:  
\vskip4pt
\centerline{(i) $\exists$ $\tlsgm\!,\tltau$ such 
that $[\tlsgm,\tltau]\in\sbgg{\sigma^\beta}$; \
(ii) $\,\forall$ $\tlsgm,\tltau$ 
one has $[\tlsgm,\tltau]\in\sbgg{\sigma^\beta}$.}
\vskip4pt
\item[] {\bf Note} the following: Let $\sigma,\tau\in\GGa\FF$ 
be independent and c.l. Then the following hold:
\vskip2pt
\begin{itemize}[leftmargin=25pt]
\item[a)] If $\sigma_1,\tau_1\in\sbgg{\sigma,\tau}$ are
independent, then $\sbgg{\sigma,\tau}=\sbgg{\sigma_1,\tau_1}$,
and $\sigma_1,\tau_1$ is c.l.
\vskip2pt
\item[b)] There exists $1\neq\sigma_1\in\sbgg{\sigma,\tau}$
such that $[\sigma_1,\tau_1]\in\sbgg{\sigma_1^\beta}$
for all $\tau_1\in\sbgg{\sigma,\tau}$. 
\vskip2pt
\item[c)] For $k\in\lvZ$ with $(k,\ell)=1$ one has:
$\sigma^k\!,\tau^k$ is a c.l.\ pair, provided $\sigma,\tau$ 
is a c.l.\ pair.
\vskip2pt
\item[d)] One has: $\sigma,\tau$ and $\tau,\sigma$ are both
c.l.\ pairs if and only if $[\tlsgm,\tltau]=1$.
\end{itemize}
\vskip2pt
\item[3)] Let$I\subset D\subset\GGa\FF$ be subgroups. We say
that $\clp ID$ is a {\it c.l.\  pair\/} (of groups) in $\GGa\FF$ if 
$I\neq1$, $D$ is non-cyclic, and all independent pairs $\sigma,\tau$ 
with $\sigma\in I$, $\tau\in D$ are c.l.\ pairs. In particular, if so, then 
$\sigma,\ha1\sigma'$ is a c.l.\ pair for all independent 
$\sigma,\,\sigma'\in I.$ 
\vskip2pt
\item[] We say that $D$ is c.l., if $\clp DD$ is a c.l.\ pair, i.e., 
$\tau,\sigma$ is a c.l.\ pair for all independent $\sigma,\tau\in D.$ 
\vskip2pt
\item[] {\bf Note} that if $\sigma,\ha1\tau\in\GGa\FF$ define 
a c.l.\ha3pair, then in the notation from 2),~b) above,  one has: 
\[
I:=\sbgg{\sigma_1}\leqslant\sbgg{\sigma,\tau}:=D \ \ 
        \hbox{is c.l.\ha3pair (of groups)\,.}
\]
\item[4)] For $\clp ID$ c.l.\ha2pair, the following hold: 
\vskip2pt
\begin{itemize}[leftmargin=25pt]
\item[a)] There exists a unique maximal 
$I_D\subset\GGa\FF$ such that $\clp{I_D}{DI_D}$ 
is c.l.\ha2pair, hence $I\subset I_D$.
\vskip2pt
\item[b)] There exists a unique maximal $D_I\subset\GGa\FF$ 
such that $\clp I{D_I}$ is c.l.\ha3pair, hence $D\subset D_I$.
\end{itemize}
\vskip2pt
\item[$\bullet$] Finally, a c.l.\ha3pair $\clp ID$ is called 
{\it maximal,\/} if $I=I_D$, $D=D_I$. We notice the following:
\vskip2pt
Starting with a c.l.\ pair $I\leqslant D$, one has: 
$\clp{I_D}{D_{I_D}}$ and $\clp{I_{D_I}}{D_I}$ 
are maximal.
\vskip2pt
\item[5)] Let $\phi^a\in\Aut(\GGa\FF)$ be the automorphism 
which lifts to an automorphism $\phi^c\in\Aut(\GGc\FF).$  
Then for every pair of subgroups $I\subset D\subset\GGa\FF$ one has:
\vskip2pt
\itm{20}{
\item[a)] $\clp ID$ is a maximal c.l.\ pair in $\GGa\FF$ iff 
$\phi(\clp ID)\!:=\big(\clp{\phi(I)}{\phi(D)}\big)$ is so.
\vskip2pt
\item[b)] If $\clp ID$ is a maximal c.l.\ pair, then $\phi(\clp ID)=(\clp ID)$ 
iff $\phi(I)=I$ \ iff \ $\,\phi(D)=D.$
}
\vskip2pt
\item[] (Indeed, $\clp ID$ is a maximal c.l.\ pair \ iff \ 
$\clp{\phi(I)}{\phi(D)}$ is a maximal c.l.\ pair, etc.)
\end{itemize}
\end{definition/remarks}
\noindent
For the next basic fact, see e.g.\ {\sc Pop}~\cite{P1}, Section~3, 
and {\sc Topaz}~\cite{To1}.
%
%
%
\begin{fact}
\label{fkt5}
{\it In the above notation, suppose that $w\FF$ 
is not $\ell$-divisible, and $\FF^\times\!/(1+\eum_w)$
is non-cyclic, or equivalently, $I_w\neq1$ and 
$D_w$ is non-cyclic. The following hold:
\vskip2pt
\itm{25}{
\item[{\rm1)}] $\clp{I_w}{D_w}$ is a c.l.\ pair. In particular, 
$\clp{I_w}{I_{D_w}}$ and $\clp{D_w}{D_{I_w}}$.
\vskip2pt
\item[{\rm2)}] Moreover, if $\,w$ has rank one, then 
$\clp{I_{D_w}}{D_w}$ is a maximal c.l.\ pair. In particular, 
in this case, every group automorphism of $D_w$ 
defined by some $\sigma\in\GGa\FF$ maps 
$I_{D_w}$ into itself.
}
}
\end{fact}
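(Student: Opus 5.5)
The plan is to make the Kummer-theoretic description of $I_w\leqslant D_w$ explicit and reduce commuting liftability to a statement about the norm residue symbol. By Kummer theory $\GGa\FF=\Hom(\FF^\times\!,\mu_\ell)$, and elements of $\GGc\FF$ are described by the pairing with $H^2(\FF,\mu_\ell)$. Concretely, for $\sigma,\tau\in\GGa\FF$ the commutator $[\tlsgm,\tltau]$ and the powers $\sigma^\beta,\tau^\beta$ are detected by symbols $(a,b)\in H^2$ with $a,b\in\FF^\times$. Under this dictionary (as in \cite{To1} and \cite{P1}, Section~3), the condition ``$[\tlsgm,\tltau]\in\sbgg{\sigma^\beta}$'' is equivalent to a vanishing condition. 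It says that, on a suitable complement, every symbol $(x,y)$ on which the relevant dual forms do not vanish is controlled by the Bockstein term attached to $\sigma$.

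For 1), I would take $\sigma\in I_w$ and $\tau\in D_w$ independent. The key input is that every $u\in 1+\eum_w$ is an $\ell$-th power in the $w$-Henselization $\FF^h$, by Hensel's lemma, using $\chr(\FF w)\neq\ell$ or the appropriate variant. It follows that the decomposition field of $w$ in $\FF^c$ contains $\FF[\root\ell\of{1+\eum_w}]$. Hence $D_w^c$ can be chosen inside the image of $G_{\FF^h}$, and there we compute. In $\FF^h$ the group $\FF^{h\times}/\ell$ is generated modulo $1+\eum$ by units and a uniformizer-like part, and symbols $(u,u')$ of two units vanish modulo the Bockstein contribution, since they come from the residue field through the tame symbol. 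Elements of $I_w$ kill all units, so the only possible nonzero contribution to $[\tlsgm,\tltau]$ comes from symbols $(\pi,\cdot)$, which land in $\sbgg{\sigma^\beta}$. This gives $\clp{I_w}{D_w}$ as a c.l.\ pair. The consequences $I_w\subset I_{D_w}$ and $D_w\subset D_{I_w}$ then follow formally from Definition/Remarks~\ref{defrem1},~4). The nontriviality hypotheses, $I_w\neq1$ and $D_w$ non-cyclic, are exactly what Fact~\ref{fkt4},~1) translates from ``$w\FF$ not $\ell$-divisible'' and ``$\FF^\times\!/(1+\eum_w)$ non-cyclic''.

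For 2), I will use Topaz's main theorem in \cite{To1}. It says that for a c.l.\ pair $\clp ID$ there is a valuation $v$ with $I\subset I_v$ and $D\subset D_v$. I would apply it to the maximal pair $\clp{I_{D_w}}{D_{I_{D_w}}}$ to get $v$ with $D_w\subset D_v$ and $I_{D_w}\subset I_v$. By Fact~\ref{red-inr-dec-coars}, together with the dual description $D=\Hom(\FF^\times/(1+\eum),\mu_\ell)$, the inclusion $D_w\subset D_v$ forces $1+\eum_v\subset(1+\eum_w)\FF^{\times\ell}$. This means $v$ and $w$ are comparable, or at least dependent. Since $w$ has rank one, dependence forces $v\leqslant w$ or $v=w$ up to the $\ell$-core. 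It follows that $D_v\subset D_w$, hence $D_{I_{D_w}}=D_w$, which gives maximality. This step is the main obstacle: I expect the careful passage from ``$D_w\subset D_v$'' to comparability of $v,w$ modulo $\ell$-th powers, using Fact~\ref{fkt0} and the independence criterion $U_1\cdot U_2=\FF^\times$ as in Lemma~\ref{indepval}, to need the most care.

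Finally, the last claim is formal. Any $\sigma\in\GGa\FF$ acts on $\GGa\FF$ through conjugation in $\GGc\FF$, so the induced automorphism lifts to $\GGc\FF$. If it maps $D_w$ to itself, then Definition/Remarks~\ref{defrem1},~5) applied to the maximal c.l.\ pair $\clp{I_{D_w}}{D_w}$ shows that $I_{D_w}$ is also preserved.
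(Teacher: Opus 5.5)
Part 1) of your proposal has a genuine gap. The paper itself gives no proof of this Fact; it only cites \cite{P1}, Section~3, and \cite{To1}.

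First, your reduction to $F^h$ rests on every $u\in 1+\mathfrak m_w$ being an $\ell$-th power in $F^h$. This fails when $\operatorname{char}(Fw)=\ell$, a case the statement allows and precisely the one the minimized groups $I_w\leqslant D_w$ are designed for (cf.\ Fact~\ref{fkt4},~3)). For example, $\zeta_\ell\in 1+\mathfrak m_w$ as soon as $w(\ell)>0$, but for $F=\mathbb Q(\zeta_\ell)$ with the $\ell$-adic valuation, $\zeta_\ell$ is not an $\ell$-th power in $F^h$. The inference also runs backwards. A decomposition field containing $F[\sqrt[\ell]{1+\mathfrak m_w}]$ gives $Z^a_w\subseteq D_w$, whereas computing inside the image of $G_{F^h}$ needs $D_w\subseteq Z^a_w$. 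That inclusion is true, but for a different reason: $F^h|F$ is immediate, so $F^{h\times}=F^\times(1+\mathfrak m_{w^h})$.

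Second, and more seriously, the symbol computation does not prove the claim. Symbols of two units need not vanish in $K_2/\ell$, and saying that contributions of $(\pi,\cdot)$ ``land in $\langle\sigma^\beta\rangle$'' conflates $\Delta_F$ with its dual. What is actually needed is the following. By transgression, $\Delta_F^\vee=\ker\bigl(H^2(\mathcal G^a_F)\to H^2(G_F)\bigr)$. By Merkurjev--Suslin, this is the group of relations $\sum_i\{a_i,b_i\}\pm\{c,\zeta_\ell\}=0$ in $K_2(F)/\ell$, where the Bockstein $\beta\chi_c$ gives $\pm\{c,\zeta_\ell\}$. Identifying $\mu_\ell=\mathbb Z/\ell$, such a relation takes, up to sign, the value $\sum_i\omega(a_i,b_i)$ on $[\tilde\sigma,\tilde\tau]$, where $\omega(a,b)=\sigma(a)\tau(b)-\sigma(b)\tau(a)$, and the value $\sigma(c)$ on $\sigma^\beta$. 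So you must show that $\sum_i\omega(a_i,b_i)=0$ whenever $\sigma(c)=0$.

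This follows from two facts:
\begin{itemize}
\item $\omega(a,1-a)=0$ for all $a\neq 0,1$, so $\omega$ factors through $K_2(F)/\ell$. Check the cases $w(a)>0$, $w(a)<0$ and $w(a)=0$. For $w(a)<0$, write $1-a=-a(1-a^{-1})$ and use $\sigma(-1)=\tau(-1)=0$, as $\ell$ is odd. Only $\sigma(\mathcal O_w^\times)=0$ and $\sigma(1+\mathfrak m_w)=\tau(1+\mathfrak m_w)=0$ are used.
\item $\omega(c,\zeta_\ell)=0$, because $\sigma(c)=\sigma(\zeta_\ell)=0$.
\end{itemize}
This argument works for every $w$ and needs no Henselization.

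Your argument for 2) is the natural use of \cite{To1} and works, but its key step is stated the wrong way round. A nontrivial $v$ dependent with the rank-one $w$ has $w$ as their common nontrivial coarsening. So $w\leqslant v$, i.e.\ $v$ \emph{refines} $w$, and this is what gives $D_v\subseteq D_w$ by Fact~\ref{red-inr-dec-coars}. ``$v\leqslant w$'' would give the reverse inclusion. The refinement can be strict: for $F=\mathbb C((x))((y))$ and $w$ the $y$-adic valuation, the valuation produced by Theorem~\ref{Topaz} is the rank-two one.

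Dependence itself is a one-liner. If $v,w$ were independent, then $F^\times=(1+\mathfrak m_v)(1+\mathfrak m_w)\subseteq(1+\mathfrak m_w)F^{\times\ell}$, i.e.\ $D_w=1$.

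Your formulation of Topaz's theorem is also too strong. In the form of Theorem~\ref{Topaz} you get $D_{I_{D_w}}=D_v$ for some $v\in\mathcal W_F$, but $I_{D_w}=I_v$ only when $D_{I_{D_w}}$ is not c.l. Nontriviality of $v$ survives anyway: either $I_v=I_{D_w}\neq1$, or $D_{I_{D_w}}/I_v$ is cyclic while $D_{I_{D_w}}\supseteq D_w$ is not, so $I_v\neq1$.

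With these fixes, $D_w\subseteq D_{I_{D_w}}\subseteq D_v\subseteq D_w$ gives maximality. The final claim then follows from Definition/Remarks~\ref{defrem1},~5), as you say.
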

%
%
%
\begin{notations/remark}[{cf.\ha2{\sc Topaz}~\cite{To1}, \S1.2 for some details}] 
\label{notarem1}
In the above context, consider:
\vskip2pt
\itm{25}{
\item[1)] Let $\clW_\FF$ be the set of valuations $w\in\Val{}(\FF)$ which  
satisfy:
\vskip2pt
\itm{20}{
\item[(i)$\hb1$] If $w_1<w$ strictly, then the value group of
$w/w_1$ is not $\ell$-divisible, i.e., $I_{w/w_1}\neq1$. 
\vskip2pt
\item[(ii)] If $w< w_2$ strictly, then $D_{w_2}=D_{w}$ implies 
$I_{w_2}= I_{w},$ i.e., $I_{w_2/w}=1$.
}
\vskip2pt
\item[$\bullet$] Notice that {\it every $w\in\clW_\FF$ equals its 
$\FF^a|\FF$-core.\/} Indeed, if $w\in\clW_\FF$ and $w_1<w$ 
strictly, then $I_{w/w_1}\neq1$, implying that $\FF^a w_1|\FF w_1$ is 
not purely inseparable.
\vskip2pt
\item[2)] Let $\clP_\FF$ be the set of \textit{\textbf{maximal}} c.l.\ pairs 
$\,I\leqslant D$ in $\GGa\FF$ with $I\neq1,\, D$ not cyclic, and~denote:
\vskip2pt
\centerline{$\clI_\FF\!:=\{I\subset\GGa\FF\,|\,\exists\,
\clp ID \hbox{ in } \clP_\FF\},$  \ \ \
$\clD_\FF\!:=\{D\subset\GGa\FF\,|\,\exists\,
\clp ID \hbox{ in } \clP_\FF\}.$}
\vskip3pt
\item[$\bullet$] Notice that given $\,I\leqslant D$ in $\clP_\FF,$ {\it each 
$I$ and $D$ individually determine the c.l.\ pair $I\!\leqslant\!D.$\/} 
\item[] Indeed, by by Definition/Remarks~\ref{defrem1},~4), one has  
both $D=D_I$ and $I=I_D.$ 
\vskip2pt
\item[$\bullet$]
In particular, the maps $\clP_\FF\to\clI_\FF$,
$\clp ID\mapsto I$ and $\clP_\FF\to\clD_\FF$,
$\clp ID\mapsto D$ are bijective.
}
\end{notations/remark}
%
%
%
\begin{theorem}[\ha1{cf.\ha2{\sc Topaz}~\cite{To1}, {\rm Thm 1,\ha2(1) \&\
Thm 6, for $N\!=\!n=\!\scalebox{.9}[1]{1}=$\scalebox{.9}[.9]{\bf R}(1)\ha1}}]
\label{Topaz}
The following hold:
\vskip2pt
\itm{25}{
\item[{\rm1)}] For $\,w\,$ in $\,\clW_\FF,$ there is $\clp ID$ in 
$\clP_\FF$ such that $D=D_w$ and the following hold:
\vskip2pt
\itm{20}{
\item[{\rm a)}] $D$ is a c.l.\ subgroup in $\GGa\FF$ iff $D/I_w$ is cyclic.
\vskip2pt
\item[{\rm b)}] If $D$ is not a c.l.\ subgroup in $\GGa\FF$, then
$D_w=D$ and $I_w=I.$ Moreover, this is so if $D/I$ is not cyclic,
i.e., if $\,\FF w^\times\hb3/\ell$ is not cyclic.
}
\vskip2pt
\item[{\rm2)}] For $\clp ID$ in $\clP_\FF,$ there is $\,w\in\clW_\FF$ 
satisfying the condition from $1)$ above. 
}
\end{theorem}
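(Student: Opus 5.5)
The proof is essentially a transcription of Topaz's results into the present notation. It rests on two translations: Kummer duality $\GGa\FF=\Hom(\FF^\times\!,\mu_\ell)$, and the description of $\GGac_\FF$ together with the maps $\psi,\beta$ through cup products in $H^2(\FF,\mu_\ell^{\otimes2})$. By Merkurjev--Suslin, the latter identify the pairing $(\sigma,\tau)\mapsto[\tlsgm,\tltau]$ modulo $\sbgg{\sigma^\beta,\tau^\beta}$ with the obstruction coming from the symbols $(x,y)\in K_2(\FF)/\ell$.

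\emph{Step 1 (dual reformulation).} For subgroups $I\leqslant D\leqslant\GGa\FF$ put $T_D:=D^\perp\subset\FF^\times\!/\ell$ and $T_I:=I^\perp\supset T_D$. I will show that $\clp ID$ being a c.l.\ pair is equivalent to a \emph{rigidity} condition. Concretely: for $x\notin T_I$ one should have $1-x\in T_D\cup x\,T_D$, with the analogous condition for $x\in T_I\setminus T_D$. The direction ``rigidity $\Rightarrow$ c.l.'' uses that $(x,1-x)=0$ forces the relevant commutators to land in $\sbgg{\sigma^\beta}$. The converse uses Merkurjev--Suslin to detect nonvanishing symbols by commutators.

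\emph{Step 2 (from rigidity to valuations; this gives 2)).} Given a maximal c.l.\ pair $\clp ID$ in $\clP_\FF$, I apply the Arason--Elman--Jacob/Ware construction (in Topaz's version) to the rigid pair $T_D\subset T_I$. It produces a valuation ring $\clO$ with $1+\eum\subset T_D$ and $\clO^\times\subset T_I$. Dually, this says $I\subset I_w$ and $D\subset D_w$ for the corresponding valuation $w$. Replacing $w$ by the coarsest valuation with the same minimized groups, and then refining as far as possible while keeping $D_w$ fixed, I arrange conditions (i),(ii) of Notations/Remark~\ref{notarem1}, so that $w\in\clW_\FF$. Maximality of $\clp ID$, combined with Fact~\ref{fkt5}~1) applied to $w$ and Definition/Remarks~\ref{defrem1}~4), then forces $D=D_w$.

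\emph{Step 3 (the converse 1) and the dichotomy a), b)).} For $w\in\clW_\FF$, Fact~\ref{fkt5}~1) shows that $\clp{I_w}{D_w}$ is a c.l.\ pair. Its maximal envelope $\clp{I_{D_w}}{D_{I_{D_w}}}$ lies in $\clP_\FF$. Condition (ii) together with Fact~\ref{red-inr-dec-coars} prevents $D$ from growing, so $D=D_w$.

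For a), I use $D_w/I_w\cong\Hom(\FF w^\times/\ell,\mu_\ell)$ (Fact~\ref{fkt4}) and reduce commutator computations in $D_w^c$ to the residue field $\FF w$:
\begin{itemize}
\item if $D/I_w$ is cyclic, then any two independent elements of $D$ differ by inertia, and Definition/Remarks~\ref{defrem1}~2)d) gives commuting lifts, so $D$ is c.l.;
\item if $D/I_w$ is not cyclic, then $\FF w$ has two independent Kummer classes. Condition (i) shows that $\FF w$ carries no further valuation absorbing them, so their symbol in $K_2(\FF w)/\ell$ is nonzero. This yields non-commuting lifts, so $D$ is not c.l.
\end{itemize}
For b), if $D$ is not c.l., then the inertia part is intrinsically the set of elements forming c.l.\ pairs with all of $D$, and that set is $I_w$. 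Hence $I=I_w$.

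The main obstacle is Step~2. There one must produce a genuine valuation ring from the purely multiplicative rigidity data and then tune it to satisfy (i),(ii) of $\clW_\FF$ exactly. For $\ell$ odd, the Arason--Elman--Jacob argument should go through directly, and I would follow Topaz's \S1.2 verbatim at this point.
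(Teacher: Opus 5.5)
The paper gives no argument for this theorem: it is imported from Topaz (Thm.~1(1) and Thm.~6). So presenting your proof as a transcription of Topaz is in line with the paper. The transcription you propose, however, breaks down in exactly the case the statement is careful about, namely when $D$ is itself a c.l.\ subgroup.

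Steps~1--2 claim that every (maximal) c.l.\ pair $I\leqslant D$ makes $T_I=I^\perp$ rigid over $T_D=D^\perp$, and hence yields $w$ with $I\subseteq I_w$ and $D\subseteq D_w$. This is false. Take $F=\mathbb{F}_q((t))$ with $q\equiv1\pmod{\ell^2}$.
\begin{itemize}
\item The maximal pro-$\ell$ quotient of $G_F$ is $\mathbb{Z}_\ell(1)\rtimes\mathbb{Z}_\ell$ with $[\phi,\tau]=\tau^{q-1}\in\tau^{\ell^2\mathbb{Z}_\ell}$. Hence $\mathcal{G}^c_F\cong(\mathbb{Z}/\ell^2)^2$ is abelian and $\mathcal{G}^a_F\cong(\mathbb{Z}/\ell)^2$.
\item Consequently the only element of $\mathcal{P}_F$ is $\mathcal{G}^a_F\leqslant\mathcal{G}^a_F$.
\item No valuation has $I_w=\mathcal{G}^a_F$: that would mean $\mathcal{O}_w^\times\subseteq F^{\times\ell}$, whereas $\mathbb{F}_q^\times\subseteq\mathcal{O}_w^\times$ for every $w$.
\item Correspondingly, the rigidity of Step~1 fails for $T_I=T_D=F^{\times\ell}$. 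For instance, with $\ell=3$, $q=19$, $x=5$ one has $1-x=-4\notin F^{\times3}\cup5F^{\times3}$.
\end{itemize}
The theorem survives only because in case a) it asserts just $D=D_w$ with $D/I_w$ cyclic, and identifies $I$ with $I_w$ only in case b).

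The missing ingredient is a separate treatment of c.l.\ subgroups. For such $D$, commuting liftability does not give rigidity. One only gets (Bogomolov--Tschinkel, Topaz) a valuation $w$ with $D\subseteq D_w$ and $D/(D\cap I_w)$ cyclic. Your derivation of $D=D_w$ from $I\subseteq I_w$ therefore has to be redone in that case.

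Your arguments for a) and b) also do not work as written.

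For ``$D/I_w$ non-cyclic $\Rightarrow$ $D$ not c.l.'' you appeal to condition (i). But valuations of the residue field $Fw$ correspond to \emph{refinements} of $w$, and those are governed by (ii). Moreover, the absence of such valuations does not make the symbol of two given independent classes nonzero: $x$ and $1-x$ are typically independent, yet $\{x,1-x\}=0$. The contradiction should come from (ii) instead. Suppose $D_w$ were c.l.; the c.l.-subgroup theorem then gives $w'$ with $D_w\subseteq D_{w'}$ and $D_w/(D_w\cap I_{w'})$ cyclic.
\begin{itemize}
\item $w'\leqslant w$ is impossible: then $I_{w'}\subseteq I_w$, and $D_w/I_{w'}$ surjects onto the non-cyclic $D_w/I_w$.
\item An incomparable $w'$ is excluded by the independence argument of Remark~\ref{unique}.
\item $w'>w$ forces $D_{w'}=D_w$ but $I_{w'}\neq I_w$, contradicting (ii).
\end{itemize}

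In the cyclic direction, Definition/Remarks~\ref{defrem1},~2)d) cannot produce commuting lifts in general. For the same $F$ with $q\equiv1\pmod{\ell}$ but $q\not\equiv1\pmod{\ell^2}$, one has $[\tilde\phi,\tilde\tau]=(\tilde\tau^{\ell})^{(q-1)/\ell}$, a nonzero multiple of $\tau^\beta$. What one actually gets is only $[\tilde\sigma,\tilde\tau]\in\langle\sigma^\beta,\tau^\beta\rangle$.

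For b), the claim that the elements forming c.l.\ pairs with all of $D$ are exactly $I_w$ is the statement to be proved, not an argument for it.

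Finally, Fact~\ref{fkt5} needs $I_w\neq1$ and $D_w$ non-cyclic. Membership in $\mathcal{W}_F$ does not guarantee this: see the trivial valuation of a number field containing $\mu_\ell$, or the $t$-adic valuation of $\mathbb{C}((t))$. This is a tacit hypothesis of the statement, and you should make it explicit.
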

%
%
%
\begin{remark}
\label{unique}
Note that in Theorem~\ref{Topaz} above, both $w$ in $\clW_\FF$ 
and $\,I\leqslant D\,$ in $\clP_\FF$ are {\it unique corresponding 
to each other.\/} \ {\bf Notation}: $w\leadsto (\clp ID)^w\leadsto I^w\!,D^w\!,\,$ 
resp.\ $\clp ID\leadsto I, D\leadsto w^I\!,w^D\!.$
\end{remark}
\begin{proof}
Uniqueness of $I\!\leqslant\!D$:
Let $\clp{I_i}{D_i}\leadsto w$, $i=1,2$. Then $D_i=D_w$ and $\clp{I_i}{D_w}$,
$i=1,2$ are both c.l.\ pairs, hence so is $\clp{I_1I_2}{D_w}$. 
And $\clp{I_i}{D_w}$ being maximal implies $I_1=I_1I_2=I_2$. 
\vskip2pt
Uniqueness of $w$: \ By contradiction, let $w_1,w_2\leadsto (\clp ID),$
$w_1\neq w_2.$ Then $D_{w_i}=D$ and $I_{w_i}\subset I$, and $w_1,w_2$ 
are not comparable by Notations/Remark~\ref{notarem1},~1). Set
$w=\min(w_1,w_2)$, hence and $\oli w_i=w_i/w$ are non-trivial.  
Then letting $\pr:Z^a_{w}\to G_{w^a|w}$ be the canonical projection, 
one has $1\neq I_{\oli w_i}=I_{w_i}/I_{w}=pr(I_{w_i}),$ $i=1,2.$ On the 
other hand, one has: 
\vskip5pt
\centerline{$1\neq I_{\oli w_i}\subset\pr(I)\subset \pr(D)=
    D_{\oli w_i}\subset Z^a_{\oli w_i}\subset G_{w^a|w}$, \ hence
    $1\neq \pr(I)\subset Z^a_{\oli w_1}\cap Z^a_{\oli w_2}$.}
\vskip5pt
\noindent
Since $\oli w_1,\oli w_2$ are independent, this is a contradiction by
Proposition~\ref{fkt01}. 
\end{proof}
%
%
%
\subsection{\bf Commuting liftability and Galois action}$\ha0$
In the above context, let $\FF|\EE$ be a Galois extension with 
$\mu_\ell\subset\FF$ and Galois group $\Gal(\FF|\EE)$. Recall that  
$G(\FF|\EE)$ acts on the spaces $\clP_\FF,\clD_\FF,\clI_\FF$ 
(by conjugation) and on $\clW_F$ by $\gx(w)=w\circ\gx^{-1}\hb3$,
$\gx\in G(\FF|\EE).$ 
Further, the $G(\FF|\EE)\!$-actions are compatible with the previous 
constructions/introduced objects in the following sense: 
\vskip2pt
\itm{20}{
\item[-] If $\gx(w)=w,$ then $\gx\big(\clp{I_\euw}{D_\euw}\big)=
\big(\clp{\gx(I_\euw)}{\gx(D_\euw)}\big)$ and 
$\gx\big(\clp{I_{D_\euw}}{D_\euw}\big)=
\big(\clp{\gx(I_{D_\euw})}{\gx(D_\euw)}\big).$
\vskip2pt
\item[-] If $\clp ID\leadsto w,$ then 
$\gx(\clp ID\big)=\big(\clp{\gx(I)}{\gx(D)}\big)\leadsto \gx(w).$
}
\noindent
Further, by mere definitions one has that $D_w\triangleleft Z_{w^a|v}$ 
and $I_w,\, I_{D_w}\triangleleft \, Z_{w^a|v}.$ 
And if $\chr(\FF w)\neq\ell,$ then $D_w=Z^a_w,$
$I_w=T^a_w.$
%
%
\begin{notation}
\label{nota2}
In the above notation, recalling Remark/Notation~\ref{notarem1}, 
we denote:
\vskip2pt
\itm{30}{
\item[a)] $\clW_{\FF|\EE}\!:=\big\{w|v\,\big|\,w\in\clW_\FF,\ v=w|_\EE\big\}.$
\vskip2pt
\item[b)] $\clD_{\FF|\EE}\!:=\big\{D_w\leqslant Z_{w^a|v}\,\big|\,
w|v\in\clW_{\FF|\EE}\}.$ 
} 
\end{notation}
In the above context, 
for $\clp ID$ from $\clP_\FF$ and 
$\clp ID\leadsto w\in\clW_\FF$ relating to each other as
in~Theorem~\ref{Topaz}, set $v:=w|_\EE,$ thus $w|v\in\clW_{\FF|\EE},$ 
and let $w^a|w|v$ be the prolongations $\FF^a|\FF|\EE.$ 
%
%
\begin{proposition}
\label{fktTopaz}
In the above notation, the following hold:
\vskip4pt
\itm{25}{
\item[{\rm1)}] {\bf (Proposition~\ref{fkt01} revisited)}. \ For $\gx\in G(\FF|\EE)$ 
and $\clp ID\leadsto w|v$ 
the following hold:
\[
\gx(I)\!=\!I \ \ {\it iff} \ \ \gx(D)\!=\!D \ \ {\it iff} \ \ \gx(Z^a_w)\!=\!Z^a_w \ \
\ \ {\it iff} \ \ \gx(Z_{w^a|v})\!=\!Z_{w^a|v} \ \ {\it iff} \ \ 
\gx(w)\!=\!w \ \ {\it iff} \ \ \gx\!\in\!Z_{w|v}.
\]
\item[{\rm2)}] {\bf(Galois action)}. 
$\clW_\FF,\, \clW_{\FF|\EE},\, \clD_{\FF|\EE},\, \clP_\FF,\, \clD_\FF,\,\clI_\FF$ 
are $G(\FF|\EE)$-spaces, and the maps
\[
\clW_{\FF}\to\clW_{\FF|\EE}\to\clD_{\FF|\EE}\to\clP_\FF\to\clD_\FF\vrg\clI_\FF
\quad w\mapsto w|v\mapsto D_w\leqslant Z_{w|v} \mapsto
  \clp{I_{D_w}}{D_w}\mapsto I_{D_w}\vrg D_w
\]
are $G(\FF|\EE)$-isomorphisms, where the last two maps are
as defined in Notations$/\!$Remark~\ref{notarem1},~2).
}
\end{proposition}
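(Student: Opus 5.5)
The plan is to reduce everything to the uniqueness statements in Theorem~\ref{Topaz} and Remark~\ref{unique}, together with Proposition~\ref{fkt01}. The key tool is that the correspondence $w\leftrightarrow\clp ID$ is defined purely group theoretically from $\GGc\FF\to\GGa\FF$. Since every $\gx\in G(\FF|\EE)$ acts on $\GGc\FF$ and $\GGa\FF$ by conjugation inside $G(\FF^c|\EE)$, and this conjugation is a group automorphism of $\GGa\FF$ lifting to $\GGc\FF$, Definition/Remarks~\ref{defrem1},~5) applies to it.

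\emph{Step 1: the $G(\FF|\EE)$-actions.} For $\gx\in G(\FF|\EE)$ and $w\in\clW_\FF$, I first check that $\gx(w)=w\circ\gx^{-1}$ again lies in $\clW_\FF$. Conditions (i),(ii) of Notations/Remark~\ref{notarem1},~1) are stated in terms of value groups and of the groups $I_{\bullet},D_{\bullet}$, and $\gx$ transports coarsenings, value groups and residue fields isomorphically, with $I_{\gx(w)}=\gx(I_w)$ and $D_{\gx(w)}=\gx(D_w)$. By Definition/Remarks~\ref{defrem1},~5), $\gx$ maps maximal c.l.\ pairs to maximal c.l.\ pairs, so it acts on $\clP_\FF$, $\clI_\FF$ and $\clD_\FF$. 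It acts on $\clW_{\FF|\EE}$ and $\clD_{\FF|\EE}$ via $w|v\mapsto\gx(w)|v$ and $D_w\leqslant Z_{w^a|v}\mapsto \gx(D_w)\leqslant\gx(Z_{w^a|v})=Z_{\gx(w)^a|v}$.

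\emph{Step 2: equivariance and bijectivity of the maps in 2).} Equivariance follows from Step 1 and the compatibilities listed just before Notation~\ref{nota2}; in particular, $\clp ID\leadsto w$ implies $\gx(\clp ID)\leadsto\gx(w)$. The map $w\mapsto w|v$ is tautologically bijective. The map $w|v\mapsto (D_w\leqslant Z_{w^a|v})$ is surjective by definition. The map to $\clP_\FF$ sends $D_w$ to $\clp{I_{D_w}}{D_w}$, which by Theorem~\ref{Topaz},~1) is the pair associated to $w$; here I must check that the pair produced by Theorem~\ref{Topaz} has $D=D_w$ and $I=I_{D_w}$, which holds by maximality as in the proof of Remark~\ref{unique}. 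Bijectivity of the composite $\clW_\FF\to\clP_\FF$ is then exactly Theorem~\ref{Topaz},~2) together with Remark~\ref{unique}. Since the composite is bijective and each intermediate map is surjective, all the intermediate maps are bijective. The last maps $\clP_\FF\to\clD_\FF,\clI_\FF$ are bijective by Notations/Remark~\ref{notarem1},~2).

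\emph{Step 3: the chain of equivalences in 1).} The equivalence $\gx(I)=I\iff\gx(D)=D$ is Definition/Remarks~\ref{defrem1},~5b). Next, $\gx(D)=D$ means $\gx(\clp ID)=\clp ID$, which by Step 2 (injectivity and equivariance) is equivalent to $\gx(w)=w$. Further, $\gx(w)=w$ is equivalent to $\gx\in Z_{w|v}$ by definition. The remaining conditions $\gx(Z^a_w)=Z^a_w$ and $\gx(Z_{w^a|v})=Z_{w^a|v}$ need more care.
\begin{itemize}
\item The forward direction is immediate, since $Z^a_{\gx(w)}=\gx(Z^a_w)$.
\item For the converse, I invoke Proposition~\ref{fkt01},~2), applied to $\clV=\clW_{\FF}|_\EE$-valuations. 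Its hypotheses are that $\clV$ equals its core, which is the bullet in Notations/Remark~\ref{notarem1},~1), and that $\clZ^a\neq1$, which holds since $1\neq I\subset D\subset Z^a_w$. From $\gx(Z^a_w)=Z^a_w$ it then gives $\gx\in Z_{w|v}$.
\item Likewise, $\gx(Z_{w^a|v})=Z_{w^a|v}$ implies $\gx(Z^a_w)=Z^a_w$, because $Z^a_w=Z_{w^a|v}\cap\GGa\FF$.
\end{itemize}

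The main obstacle is the incomparability hypothesis of Proposition~\ref{fkt01}: two distinct valuations in $\clW_\FF$ can be comparable. My first fallback is the argument from the proof of Remark~\ref{unique}. If $\gx(w)\neq w$ but $\gx(Z^a_w)=Z^a_w$, then $\gx(I)$ and $I$ both lie in $Z^a_w$. Passing to $w_0=\min(w,\gx(w))$ and projecting to $G_{w^a|w_0}$, independence of $w/w_0$ and $\gx(w)/w_0$ together with Lemma~\ref{indepval} yields a contradiction. In the comparable case $w<\gx(w)$, the restrictions to $\EE$ coincide, which forces $w=\gx(w)$ since $\FF|\EE$ is algebraic.
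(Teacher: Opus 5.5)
Your proposal is correct and follows essentially the paper's route. The equivalences involving $I$ and $D$ come from transporting $\clW_\FF$ under $\gx$ together with the uniqueness in Theorem~\ref{Topaz} and Remark~\ref{unique}. The equivalences involving $Z^a_w$, $Z_{w^a|v}$, $w$ and $Z_{w|v}$ come from Proposition~\ref{fkt01}. Your incomparability worry dissolves exactly as you note at the end: apply Proposition~\ref{fkt01} with $\clV=\{v\}$, which is legitimate because $w$ and $\gx(w)$ are both prolongations of the same $v$ to the algebraic extension $\FF|\EE$, and distinct such prolongations are automatically incomparable.
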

\begin{proof} To 1): First, $\clp ID\in\clP_\FF$ and $w\in\clW_\FF$ 
relate to each other iff $D=D_w$ and $I=I_{D_w}.$ Next,
by Remark~\ref{unique}, since $w\in\clW_\FF$ equals its 
$\FF^a|\FF$-core, the last three equivalences follow from 
Proposition~\ref{fkt01}. Further, $\gx(w)=w$ iff $\gx(\clO_w^\times)=\clO_w^\times$
iff $\gx(1+\eum_w)=1+\eum_w$. Hence by the definitions of 
$I_w\subset D_w$ and Kummer theory one has: $\gx(w)=w$ 
$\Rightarrow$ $\gx(\FF^I)=F^I\!,$ $\gx(\FF^D)=\FF^D\!,$
and therefore, $\gx(w)=w$ $\Rightarrow$ 
$\gx(I_w)=I_w,\,\gx(D_w)=D_w.$ And further, by mere 
definitions, this implies $\gx(I_{D_w})=I_{D_w}.$ Hence 
it is left to show that $\gx(I)=I$ and/or 
$\gx(D)=D$ implies $\gx(w)=w$. First, since both $I$ and 
$D$ individually define $\clp ID$ uniquely, it is sufficient to prove 
one of the assertions, e.g., that $\gx(I)=I$ implies $\gx(w)=w$. 
This is more-or-less a reformulation of the last part of the proof of 
the Remark~\ref{unique} above, along the following lines: First, we 
notice that $\clW_\FF$ is invariant under automorphisms of $\FF$ 
(by mere definitions). Hence $w_1\!:=w\in\clW_\FF$ iff 
$w_2\!:=\gx(w)\in\clW_\FF$. And if so, by mere definitions 
on has $D_{w_2}=\gx(D)=D=D_{w_1},$ hence 
$I_{D_{w_2}}=\gx(I)=I=I_{D_{w_1}}.$ 
Conclude that $w_2=w_1$ by arguing as at the end of Remark~\ref{unique}.
\vskip2pt
To 2): Recall that by Remark~\ref{unique}, 
for $I\in\clI_\FF$ given, there is a unique $D\subset\GGa\FF$ 
with $\clp ID$ in $\clP_\FF$. Hence the stabilizer $\St_{G(\FF|\EE}(I)$ 
of $I$ in $G(\FF|\EE)$ stabilizes $D,$ i.e., stabilizes 
$I\!\leqslant\!D$. Since $w\in\clW_\FF$ with $D=D_w$ is unique, 
we conclude: $\St_{G(\FF|\EE}(I)=\St_{G(\FF|\EE)}(w)=Z_{w|v}$ for the 
unique $w|v\in\clW_{\FF|\EE}$ with $D_w=D$, $I_w\subset I.$ 
Similarly, starting with $w|v\in\clW_{\FF|\EE}$ 
and setting $D=D_{w}\in\clD_\FF,$ it follows
that $\St_{G(\FF|\EE)}(D)=Z_{w|v},$ etc. 
\end{proof}
\vskip2pt
This being said, we notice though that Theorem~\ref{Topaz} and 
Proposition~\ref{fktTopaz} above do not give conditions to ensure that the 
valuation~$\,w\,$ has $\chr(\FF w)\neq\ell.$ In the next section 
we discuss ---\ha1among other things\ha1--- this issue, which is 
essential for the proof of the main results of the paper. 
%
%
%
\section{Commuting liftability, field extensions, and sections}$\ha0$
Let $\EE|\LL$ be a regular field extension, $\tl\LL|\LL$ be a 
Galois extension, and $\tl\EE:=\EE\tl\LL$ be the compositum 
of $\EE$ and $\tl\LL$ over $\LL$ (which is well defined up
to $\LL$-isomorphism, because $\EE|\LL$ was a regular 
field extension). In particular, $\tl\EE|\EE$ is Galois such that
the canonical projection map $\tl\imath:G(\tl\EE|\EE)\to G(\tl\LL|\LL)$
is an isomorphism. For valuations $v\in\Val(\EE)$, let $\tl v|v$ 
denote prolongations of $v$ to $\tl\EE|\EE$, and $v_\LL\!:=v|_\LL$ 
and $\tl v_{\tl\LL}\!:=\tl v|_{\tl\LL}$ be the corresponding
restrictions, thus in particular,  $v_{\LL}=(\tl v_{\tl\LL})|_\LL$. Next 
let $\chr(\LL)\neq\ell$ and $\mu_\ell\subset\tl\LL$.
Recall that $\tl\LL^c|\tl\LL^a|\tl\LL|\LL$ and $\tl\EE^c|\tl\EE^a|\tl\EE|\EE$ 
are Galois extensions, giving rise to a commutative diagram 
below with exact rows and surjective columns,
$\tl\imath:G(\tl\EE|\EE)\to G(\tl\LL|\LL)$ an isomorphism, and
$\star=a$,$\,c.$ To complete the picture, let 
$s^a:G(\tl\LL^a|\LL)\to G(\tl\EE^a|\EE)$ be a section of 
$p^a:G(\tl\EE^a|\EE)\to G(\tl\LL^a|\LL)$, which is a.b.c.-liftable, 
i.e., $s^a$ lifts to a section $s^c$ of $p^c:G(\tl\EE^c|\EE)\to G(\tl\LL^c|\LL).$ 
Hence one has:
\[
\mathsurround=0pt
\begin{tikzcd}[column sep=scriptsize, row sep=scriptsize]
1\arrow[r]&\clG^\star_{\tl\EE} \arrow[r] \arrow[d, two heads, "\,\tl p^\star"]&
  G(\tl\EE^\star|\EE) \arrow[d, two heads, "\,p^\star"] \arrow[r, "pr_{\tl\EE}"] 
            &G(\tl\EE|\EE) \arrow[r] \arrow[d, "\tl\imath" ] &1 \\
1\arrow[r]&\clG^\star_{\tl\LL} \arrow[r] \arrow[u, bend left, dashed]
  &G(\tl\LL^\star|\LL) \arrow[r, "pr_{\tl\LL}"] \arrow[u, bend left, "s^\star" ]
      &G(\tl\LL|\LL) \arrow[r]&1
\end{tikzcd}
\]
\vskip-5pt
\noindent
{\bf Claim.} 
{\it The restriction $\tl s^\star\!:=s^\star|_{\clG^\star_{\tl\LL}}$ is a section 
of $\tl p^\star$ and $pr_{\tl\EE}\circ s^\star\circ pr_{\tl\LL}^{-1}$
 is the inverse map of $\,\tl\imath.$\/}
\vskip5pt
Indeed, 
one has $\tl\imath\circ pr_{\tl\EE}= pr_{\tl\LL}\circ \tl p^\star$ and 
$\tl\imath$ being an isomorphism implies $pr_{\tl\EE}\big(s^\star(g)\big)=1$ 
iff $pr_{\tl\LL}(g)=1.$ Hence $s^\star(g)\in\clG^\star_{\tl\EE}$ iff 
$pr_{\tl\EE}\big(s^\star(g)\big)=1$ iff $pr_{\tl\EE}(g)=1$ iff
$g\in\clG^\star_{\tl\LL},$ concluding that $s^\star$ is a section 
of $\tl p^\star\!.$ For the last assertions, for $h\in G(\tl L|L)$ and
$g\in\ G(\tl\LL^\star|\LL)$ one has that $g\in pr_{\tl\LL}^{-1}(h)$ iff 
$h=pr_{\tl\LL}(g)=\tl\imath\circ pr_{\tl\EE}\circ s^\star(g)$,
etc. (Note that $pr_{\tl\LL}^{-1}$ is just a correspondence, but 
$pr_{\tl\EE}\circ s^\star\circ pr_{\tl\LL}^{-1}$ is indeed single 
valued, hence a map.) The Claim is proved.
\vskip-10pt
\subsection{\bf $s^\star\!$-valuations arising from $\Val^1(\LL)$}$\ha0$
In the above context, let $\LL_1|\LL\hra\tl\LL|\LL$ be a fixed {\it finite\/} 
Galois subextension of $\tl\LL|\LL$ satisfying, among other things, 
$\mu_\ell\subset\LL_1.$
%
%
%
\begin{notation/remark}
\label{nota/rem3.1}
Let $\LL_1|\LL\hra\NN|\LL\hra\tl\LL|\LL$ denote finite (fixed) Galois 
subextensions of $\tl\LL|\LL$, and for $\euv\in\Val(\LL)$ let 
$\tl\euv|\euw|\euv_1|\euv$ be the prolongations of $\euv$ to 
$\tl\LL|\NN|\LL_1|\LL.$ Further, let $\Val^1(\LL)\subset\Val(\LL)$ be the 
set of valuations $\euv\in\Val(\LL)$ satisfying: 
\vskip4pt
\centerline{(i)$\,\euv\LL=\lvZ$; \ \ (ii)$\,\chr(\LL\euv)\neq\ell$; \ \
(iii)$\,{\LL_1\euv_1}^\times\!/\ell\neq1$; \ \ (iv) $\tl\euv|\euv$ 
are unramified.} 
\vskip4pt
Notice that for $\euv\in\Val^1(\LL)$, the prolongations $\euw|\euv_1|\euv$ 
satisfy (i), (ii), (iii), (iv), correspondingly, as well. [{\it Proof.\/} Everything 
is clear, but maybe assertion (iii), which follows from the fact 
that $[\NN\euw\!:\!\LL_1\euv_1]\leqslant [\NN\!:\!\LL_1]<\infty,$ 
and ${\LL_1\euv_1}^\times\hb3/\ell\neq1$ 
implies $\NN\euw^\times\hb3/\ell\neq1$ (because $\ell>2$), etc.] 
\vskip3pt
Further, since $\chr(\tl\LL\tl\euv)=\chr(\NN\euw)=\chr(\LL\euv)\neq\ell$ and 
$\lvZ=\euv\LL=\euw\NN=\tl\euv\tl\LL,$ by functoriality and 
basics of Hilbert decomposition theory, the following hold:  
\vskip2pt
1) $T_{\tl\euv^a|\euv}\!=\!T_{\tl\euv^a|\tl\euv}\!\cong\!\lvZ/\ell\!\cong
T_{\euw^a|\euw}\!=\!T_{\euw^a|\euv}$
and $T_{\tl\euv^c|\euv}\!=\!T_{\tl\euv^c|\tl\euv}\!\cong\!\lvZ/\ell^2\!\cong
T_{\euw^c|\euw}\!=\!T_{\euw^c|\euv}.$
Further, denoting by $G_{\tl\euv^\star|\euv}$ and $G_{\euw^\star|\euv}$
the Galois group of the residue field extensions
$\tl\LL\!^\star\tl\euv^\star|\LL\euv$, respectively
$\NN^\star\euw^\star|\LL\euv$, one has
$Z_{\tl\euv^\star|\euv}=T_{\tl\euv^\star|\euv}\rsdp G_{\tl\euv^\star|\euv}$
and $Z_{\euw^\star|\euv}=T_{\euw^\star|\euv}\rsdp G_{\euw^\star|\euv}$,
the action being in both cases by the $\ell$-adic cyclotomic character
of $G_{\tl\euv^\star|\euv}$, respectively $G_{\euw^\star|\euv}.$
\vskip2pt
2) $q_\LL^\star:G(\tl\LL^\star|\LL)\to G(\NN^\star|\LL)$ 
maps $T_{\tl\euv^\star|\euv}$ isomorphically onto $T_{\euw^\star|\euv}$
and defines a surjective morphism of the residue Galois groups 
$G_{\tl\euv^\star|\euv}\srjr G_{\euw^\star|\euv}$ which is 
obviously compatible with the $\ell$-adic characters. Finally, 
$q^\star_\LL$ defines the surjective canonical map
$q^\star_\LL:Z_{\tl\euv^\star|\euv}\srjr Z_{\euw^\star|\euv}$, 
compatible with the maps on inertia and residue Galois groups.
\vskip2pt
\underbar{Conclude}: \ First, $\mu_\ell\subset\NN$ $\Rightarrow$ 
$G_{\tl\euv^a|\euw}$ and $G_{\euw^a|\euw}$ act trivially on 
$T_{\tl\euv^a|\euw}\!\cong\lvZ/\ell\cong\!T_{\euw^a|\euw}.$
Hence $Z_{\tl\euv^a|\euw}$ and $Z_{\euw^a|\euw}$ 
are abelian and $\clp{I_\euw=T_{\euw^a|\euw}}{Z_{\euw^a|\euw}=D_\euw}$ 
is a c.l.\ pair in $G(\NN^a|\NN).$ 
Second, for $\sigma^c\in T_{\tl\euv^c|\euw}$, $\tau^c\in Z_{\tl\euv^c|\euw}$
one has $[\sigma^c\!,\tau^c]\in\langle\sigma^\beta\rangle.$ By 
abuse of language, we will say:
\vskip5pt
\noindent
{\bf Terminology}. \ \textit{\it $\clp{T_{\tl\euv^a|\euw}}{Z_{\tl\euv^a|\euw}}$ is a \textbf{generalized-commuting 
c.l.\ pair} of subgroups of $G(\tl\LL^a|\NN).$}
\end{notation/remark}
Recall $\tl\EE|\EE$ with $\tl\EE=\tl\LL\EE$ 
and the isomorphism $\tl\imath:G(\tl\EE|\EE)\to G(\tl\LL|\LL).$
Setting $\FF=\NN\EE$, thus $\FF|\EE\hra\tl\EE|\EE$, for
$\star=a$,$c$ one has the resulting surjective projections of Galois 
groups $q^\star_\EE:G(\tl\EE^\star|\EE)\srjr G(\FF^\star|\EE),$ 
$q^\star_\FF:G(\tl\EE^\star|\FF)\srjr G(\FF^\star|\FF)$ 
and $q^\star_\LL:G(\tl\LL^\star|\LL)\srjr G(\NN^\star|\LL),$ 
$q^\star_\NN:G(\tl\LL^\star|\NN)\srjr G(\NN^\star|\NN),$ and  
$p^\star:G(\FF^\star|\EE)\srjr G(\NN^\star|\LL),$  
$p^\star:\clG^\star_\FF\srjr \clG^\star_\NN.$ 
\vskip2pt
Thus 
given an 
a.b.c.-liftable section $s^a:G(\tl\LL^a|\LL)\to G(\tl\EE^a|\LL)$, 
one has commutative diagrams:
\vskip-10pt
\[
\mathsurround=0pt
\begin{tikzcd}[row sep=scriptsize, column sep=scriptsize]
\clG_{\tl\EE}^\star \arrow[dr, "q^\star_\FF"]  
       \arrow[dd, two heads, "\hbox{\lower30pt\hbox{$\tl p^\star$}}"] \arrow[rr, hook] 
      & &G(\tl\EE^\star|\EE) \arrow[dr, two heads, "q^\star_\EE"] 
             \arrow[dd, near end, two heads, "\tl p^\star"] 
                     \arrow[rr, two heads, near end, "\hb{10}p_{\tl\EE}"]
      & &G(\tl\EE|\EE) \arrow[dr, two heads, "q_\EE"]  \arrow[dd, crossing over, 
                 "\hbox{\lower30pt\hbox{$\tl\imath$}}"] \\
&\clG_{\FF}^\star \arrow[rr, crossing over, hook] 
         \arrow[dd, two heads, "\hbox{\lower30pt\hbox{$\,p^\star$}}"] 
      & & G(\FF^\star|\EE) \arrow[dd, two heads, near end, "p^\star"]           
      \arrow [rr, crossing over, two heads, "\hb{40}p_\FF"] 
      & & G(\FF|\EE) \arrow[dd, "\hbox{\lower20pt\hbox{$\imath$}}" ] \\
\clG_{\tl\LL}^\star \arrow[dr, "q^\star_\NN" '] \arrow[rr, hook] 
         \arrow[uu, bend left, "\hbox{\lower-25pt\hbox{$s^\star$}}"] 
      & & G(\tl\LL^\star|\LL) \arrow[dr, two heads, near start, "q^\star_\LL" '] 
        \arrow[rr, two heads, near end, "p_{\tl\LL}" ] 
               \arrow[uu, bend left, "\hbox{\lower-25pt\hbox{$s^\star$}}"]  
      & & G(\tl\LL|\LL) \arrow[dr, two heads,  "q_\LL"] \\
&\clG_{\NN}^\star \arrow[rr, hook] \arrow[from=uu, crossing over] 
     & &  G(\NN^\star|\LL) \arrow[rr, two heads, "\hb{35}p_{\NN}"] 
            \arrow[from=uu, crossing over] 
     & &G(\NN|\LL) 
\end{tikzcd}
\]
\vskip-3pt
\noindent
in which all maps are the canonical projections and $\tl\imath$ 
and $\imath$ are isomorphisms. Moreover, the diagram for $\star=a$
is a canonically quotient of the diagram for $\star=c.$
\begin{keylemma}
\label{keylemma1}
In the above notation$\!/\!\!$context, set
$I^a\!:=q^a_\FF\big(s^a(T_{\tl\euv^a|\euw})\big),\
D^a\!:=q^a_\FF\big(s^a(Z_{\tl\euv^a|\euw})\big).$ Then 
$\clp{I^a}{D^a}$ 
is a c.l.\ pair in $\GGa\FF\subset G(\FF^a|\EE)$ which is mapped 
by $p^a:G(\FF^a|\EE)\to G(\NN^a|\LL)$ onto the c.l.\ pair 
$\clp{I_\euw=T_{\euw^a|\euw}}{Z_{\euw^a|\euw}=D_\euw}$ 
in $\GGa\NN\subset G(\NN^a|\LL).$ And the same holds, 
correspondingly, for the maximal c.l.\ pair $\clp{I_{D_\euw}}{D_\euw}$
in $\GGa\NN.$ 
\end{keylemma}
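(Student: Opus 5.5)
We transport the generalized-commuting c.l.\ pair $\clp{T_{\tl\euv^a|\euw}}{Z_{\tl\euv^a|\euw}}$ of $G(\tl\LL^a|\NN)$ along $s^c$ and then project by $q^a_\FF$. Since $s^a$ lifts to $s^c$, commutators and $\ell$-th powers computed in $G(\tl\LL^c|\LL)$ are carried by $s^c$ to commutators and $\ell$-th powers in $G(\tl\EE^c|\EE)$, and then by $q^c_\FF$ into $\clG^c_\FF$. First we check that the images land in $\GGa\FF$. We have $T_{\tl\euv^a|\euw},Z_{\tl\euv^a|\euw}\subset G(\tl\LL^a|\NN)$. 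By the Claim, $s^a$ is compatible with $\tl\imath^{-1}$ on the quotients $G(\tl\EE|\EE)\cong G(\tl\LL|\LL)$, so it maps $G(\tl\LL^a|\NN)$ into $G(\tl\EE^a|\FF)$, because $\FF=\NN\EE$ corresponds to $\NN$ under $\tl\imath$. Then $q^a_\FF$ takes this into $\GGa\FF=G(\FF^a|\FF)$. Next, $p^a\circ q^a_\FF\circ s^a=q^a_\NN\circ p^a\circ s^a=q^a_\NN$ on $G(\tl\LL^a|\NN)$, by commutativity of the cube and because $s^a$ is a section. By Notation/Remark~\ref{nota/rem3.1},~2), $q^a_\NN$ maps $T_{\tl\euv^a|\euw}$ onto $T_{\euw^a|\euw}=I_\euw$ and $Z_{\tl\euv^a|\euw}$ onto $Z_{\euw^a|\euw}=D_\euw$. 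This gives $p^a(I^a)=I_\euw$ and $p^a(D^a)=D_\euw$.

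The c.l.\ property is the main step. Take $\sigma\in I^a$ and $\tau\in D^a$ independent, and write $\sigma=q^a_\FF s^a(\sigma_0)$ and $\tau=q^a_\FF s^a(\tau_0)$ with $\sigma_0\in T_{\tl\euv^a|\euw}$ and $\tau_0\in Z_{\tl\euv^a|\euw}$. Choose lifts $\sigma_0^c\in T_{\tl\euv^c|\euw}$ and $\tau_0^c\in Z_{\tl\euv^c|\euw}$. By the Terminology paragraph, $[\sigma_0^c,\tau_0^c]\in\sbgg{\sigma_0^\beta}$, i.e.\ the commutator is a power of $(\sigma_0^c)^\ell$. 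Apply the homomorphism $q^c_\FF\circ s^c$. The elements $\tlsgm:=q^c_\FF s^c(\sigma_0^c)$ and $\tltau:=q^c_\FF s^c(\tau_0^c)$ lift $\sigma,\tau$, because $s^c$ lifts $s^a$ and the $\star=a$ diagram is a quotient of the $\star=c$ one. Homomorphisms preserve commutators and $\ell$-th powers, so $[\tlsgm,\tltau]\in\sbgg{(\tlsgm)^\ell}=\sbgg{\sigma^\beta}$. By Definition/Remarks~\ref{defrem1},~2)(i), $\sigma,\tau$ is then a c.l.\ pair.

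It remains to check $I^a\neq1$ and that $D^a$ is non-cyclic. The first follows from $p^a(I^a)=I_\euw\cong\lvZ/\ell\neq1$. The second follows from $p^a(D^a)=D_\euw$, which is non-cyclic since $D_\euw/I_\euw\cong\Hom(\NN\euw^\times/\ell,\mu_\ell)\neq1$ by (iii) and Fact~\ref{fkt4}, and $D_\euw$ is elementary abelian. This is the point where the hypothesis that we work modulo the c.l.\ relation, rather than requiring genuine commutation in $\GGa\FF$, matters: we only need the relation to transport, and it does, since it is a group-theoretic identity in the a.b.c.\ quotient.

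For the maximal pair $\clp{I_{D_\euw}}{D_\euw}$ we repeat the argument. Let $\tl I\subset Z_{\tl\euv^a|\euw}$ be the full preimage of $I_{D_\euw}$ under $q^a_\NN$ restricted to $Z_{\tl\euv^a|\euw}$. I expect this to be the genuine obstacle: we must show that $\tl I$, $Z_{\tl\euv^a|\euw}$ is still generalized-commuting c.l. Since $q^c_\NN$ is surjective and $\ker(q^a_\NN)\cap Z_{\tl\euv^a|\euw}$ is controlled (the map is an isomorphism on inertia, and $Z_{\tl\euv^a|\euw}$ is abelian by Notation/Remark~\ref{nota/rem3.1}), I would first try to verify the commutator relation in $G(\NN^c|\NN)$ via the maximal pair there. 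I would then lift it using that $T$ maps isomorphically and that $\ell>2$, so that $\beta$ is injective on the relevant cyclic pieces. Finally, the same transport through $q^c_\FF s^c$ gives a c.l.\ pair $\clp{q^a_\FF s^a(\tl I)}{D^a}$ mapping onto $\clp{I_{D_\euw}}{D_\euw}$.
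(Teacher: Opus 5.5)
Your proof of the main assertion is correct and is the paper's own argument. The generalized-commuting relation $[\sigma^c,\tau^c]\in\langle\sigma^\beta\rangle$ for $\sigma^c\in T_{\tilde{\mathfrak v}^c|\mathfrak w}$, $\tau^c\in Z_{\tilde{\mathfrak v}^c|\mathfrak w}$ is pushed through the homomorphism $q^c_F\circ s^c$. Then $p^a\circ q^a_F\circ s^a=q^a_N$ gives $p^a(I^a)=I_{\mathfrak w}$ and $p^a(D^a)=D_{\mathfrak w}$, hence $I^a\neq1$ and $D^a$ is non-cyclic. Your appeals to the Claim and to (iii) with Fact~\ref{fkt4} fill in details the paper leaves implicit.

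The gap is the final clause, about $I_{D_{\mathfrak w}}\leqslant D_{\mathfrak w}$. The paper only says it follows ``along the same lines''. Your sketch does not prove it, and the target you set is false in general.
Your $\tilde I$ is the full preimage of $I_{D_{\mathfrak w}}$ in $Z_{\tilde{\mathfrak v}^a|\mathfrak w}$, so it contains $Z_{\tilde{\mathfrak v}^a|\mathfrak w}\cap\ker(q^a_N)$. That subgroup can have large image in $G(\tilde L|N)$. For example, in the setting of Section~4 with $k$ a number field, $\tilde k=\bar k$ and $\mathfrak v=v_\infty$, the image is $G(\bar k|k_\alpha^a)$.
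Suppose $[\sigma^c,\tau^c]\in\overline{\langle(\sigma^c)^\ell\rangle}$ held for all $\tau^c\in Z_{\tilde{\mathfrak v}^c|\mathfrak w}$. Projecting to $G(\tilde L|N)$, the procyclic group generated by the image $\bar\sigma$ would be normalized by $Z_{\tilde{\mathfrak v}|\mathfrak w}$. Here that decomposition group is all of $G(\tilde L|N)\cong G(\bar k|k_\alpha)$, which has no non-trivial procyclic normal subgroups. So the relation fails for every $\sigma\in\tilde I$ with $\bar\sigma\neq1$.
This happens even when $I_{D_{\mathfrak w}}=I_{\mathfrak w}$, where the correct choice is simply $T_{\tilde{\mathfrak v}^a|\mathfrak w}$. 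The relation in the first part holds only because $\sigma^c$ lies in the inertia group $T_{\tilde{\mathfrak v}^c|\mathfrak w}\cong\mathbb Z/\ell^2$, on which $Z_{\tilde{\mathfrak v}^c|\mathfrak w}$ acts through a cyclotomic character $\equiv 1 \bmod \ell$.

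Two further steps of the sketch do not hold up.
\begin{itemize}
\item $Z_{\tilde{\mathfrak v}^a|\mathfrak w}$ is not abelian in general, since it surjects onto $Z_{\tilde{\mathfrak v}|\mathfrak w}$. Only $Z_{\mathfrak w^a|\mathfrak w}\subseteq\mathcal G^a_N$ is abelian, notwithstanding the wording at the end of Notation/Remark~\ref{nota/rem3.1}.
\item A commutator relation checked in the quotient $G(N^c|N)$ does not lift to $G(\tilde L^c|N)$. The kernel of $q^c_N$ is large, and the isomorphism on inertia together with $\ell>2$ controls only inertia elements.
\end{itemize}

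What does go through cleanly is the case $I_{D_{\mathfrak w}}=I_{\mathfrak w}$. This holds, for instance, by Theorem~\ref{Topaz},~1)b) when $\mathfrak w\in\mathcal W_N$ and $(N\mathfrak w)^\times/\ell$ is not cyclic. In that case the clause is literally the first assertion.

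If $I_{D_{\mathfrak w}}\supsetneq I_{\mathfrak w}$, the extra elements may act non-trivially on $\tilde L$. For example, take $k$ $p$-adic with $p\neq\ell$, $\tilde k=\bar k$ and $\mathfrak v=v_\infty$. Refining $\mathfrak w$ by the $\pi$-adic valuation of $N\mathfrak w$ leaves $D_{\mathfrak w}$ unchanged but enlarges the minimized inertia group. So $I_{D_{\mathfrak w}}$ contains elements moving $\sqrt[\ell]{\pi}\in\bar k\subseteq\tilde L$, and the same normalizer obstruction reappears if you demand the relation against all of $Z_{\tilde{\mathfrak v}^c|\mathfrak w}$.

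You therefore have to choose both subgroups economically. Take $\tilde I\leqslant\tilde D$ generated by $T_{\tilde{\mathfrak v}^a|\mathfrak w}$ and specific lifts, with $q^a_N(\tilde I)=I_{D_{\mathfrak w}}$ and $q^a_N(\tilde D)=D_{\mathfrak w}$, and verify the commutator relations upstairs directly. That is a genuinely new step, not a repetition of the first argument.
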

\begin{proof}
In the above notation, recalling the remarks at 1) above, one has:
The canonical projection $G(\tl\EE^c|\FF)\srjr G(\tl\EE^a|\FF)$ 
maps the subgroups 
$\clp{s^c(T_{\tl\euv^c|\euw})}{s^c(Z_{\tl\euv^c|\euw})}$ of $G(\tl\EE^c|\FF)$
onto the subgroups $\clp{s^a(T_{\tl\euv^a|\euw})}{s^a(Z_{\tl\euv^a|\euw})}$
of $G(\tl\EE^a|\FF).$ Recalling that  $[\tlsgm,\tltau]\in\langle\sigma^\beta\rangle$ 
for all $\tlsgm\in T_{\tl\euv^c|\euw}$, $\tltau\in Z_{\tl\euv^c|\euw},$
one has $[s^c(\tlsgm),s^c(\tltau)]\in\langle s^c(\sigma)^\beta\rangle$ for all
$\tlsgm\hb3\in T_{\tl\euv^c|\euw}$ and $\tltau\hb3\in Z_{\tl\euv^c|\euw}.$
Hence since the canonical projections  
$G(\tl\EE^\star|\FF)\srjr G(\FF^\star|\FF)$ are surjective and 
$q_\NN^\star=p^\star\circ q_\FF^\star\circ s^\star\!\!$, it follows that the 
subgroups $I^\star\!:=q_\FF^\star\big(s^\star(T_{\tl\euv^\star|\euw})\big),$ 
$D^\star=q_\FF^\star\big(s^\star(Z_{\tl\euv^\star|\euw})\big)$ 
of $\clG_\FF^\star=G(\FF^\star|\FF)$ satisfy:
\vskip2pt
\itm{25}{
\item[a)] $I^a\!\subset D^a$ are subgroups of $\GGa\FF.$ Further, 
$q_\NN^\star=p^\star\circ q_\FF^\star\circ s^\star$ implies 
$p^a(I^a)=T_{\euw^a|\euw}$ and $p^a(D^a)=Z_{\euw^a|\euw}.$ 
Hence $I^a\neq1$ and $D^a$ is not cyclic.
\vskip2pt
\item[b)] $I^c\!\subset D^c$ are subgroups of $\GGc\FF$ which project
onto $I^a\subset D^a$ under $G(\FF^c|\FF)\srjr G(\FF^a|\FF)$
and $\forall\,\sigma\in I^a\!,\,\forall\, \tau\in D^a$ and any 
preimages $\tlsgm\in I^c\!,\, \tltau\in D^c$ one has 
$[\tlsgm\!,\tltau]\in\langle\sigma^\beta\rangle.$
}
\noindent
Finally, in the case when $\clp{I_\euw=T_{\euw^a|\euw}}{Z_{\euw^a|\euw}=D_\euw}$ is
replaced by $\clp{I_{D_\euw}=I_{Z_{\euw^a|\euw}}}{Z_{\euw^a|\euw}=D_\euw},$ the 
assertion of Lemma follows along the same lines, so we omit the details.
\end{proof}
%
%
\begin{construction}[{\bf\hb{3.5}($\bm{\euw|\euv\leadsto
\clp{I_{D_I}}{D_I}\leadsto w|v\in\clW_{\FF|\EE}}$}]$\ha0$
\label{construction}
In the above context, $\clp{I^a}{D^a}$ is a c.l.\ pair in $\GGa\FF$, 
and in notation from Lemma~\ref{keylemma1}, let $\clp{I_{D_{I^a}}}{D_{I^a}}$ 
be the maximal c.l.\ pair in $\GGa\FF$ attached to $\clp{I^a}{D^a}$ 
as in Definition/Remark~\ref{defrem1},~4. Further, if 
$\clp{I_{D_{I^a}}}{D_{I^a}}\leadsto w\in\clW_\FF$ as 
in~Remark~\ref{unique}, by~Theorem~\ref{Topaz} one has:
\vskip5pt
\centerline{$I_{D_{I^a}}=I_{D_w}$ and $D_{I^a}=D_w.$}
\vskip5pt 
\noindent
Set $\euv=\euw|_\LL$, $v=w|_\EE.$ Then in Notation~\ref{nota2}
one has $\euw|\euv\in\clW_{\NN|\LL}$, $w|v\in\clW_{\FF|\EE}$,~and~further: 
\vskip2pt
\itm{25}{
\item[a)] $D_\euw\leqslant Z_{\euw^a|\euv}$ lies in $\clD_{\NN|\LL}$
and $D_{w}\leqslant Z_{w^a|v}$ lies in $\clD_{\FF|\EE}.$
\vskip2pt
\item[] Further, the canonical projections  
$Z_{\euw^a|\euv}\srjr Z_{\euw|\euv}$ and $Z_{w^a|v}\srjr Z_{w|v}$
are surjective.
\vskip2pt
\item[b)] Recall the isomorphism $\imath:G(\FF|\EE)\to G(\NN|\LL)$, 
say $\gx\mapsto h.$ 
Then by Proposition~\ref{fktTopaz} applied to $\euw|\euv$ and 
$w|v$, the following hold:
\vskip3pt
\item[] \scalebox{.999}[1]{$(*)_\euw\ha5\gy(I_{D_\euw})\!=\!I_{D_\euw} 
\ha4 {\it iff} \ha4\gy(D_\euw)\!=\!D_\euw \ha4 {\it iff} \ha4 
\gy(Z^a_\euw)\!=\!Z^a_\euw  \ha4 {\it iff} \ha4 
\gy(\euw)\!=\!\euw  \ha4 {\it iff} \ha4 \gy\in Z_{\euw|\euv}.$}
\vskip2pt 
\item[] \scalebox{.999}[1]{$(*)_w\ha4 \gx(I_{D_{I^a}})\!=\!I_{D_{I^a}} 
  \ha4 {\it iff} \ha4 \gx(D_{I^a})\!=\!D_{I^a} \ha7 {\it iff} \ha4
  \gx(Z^a_w)\!=\!Z^a_w
   \ha4 {\it iff} \ha4 \gx(w)\!=\!w \ha4 {\it iff} \ha4 \gx\in Z_{w|v}.$}
\vskip2pt
\item[c)] In particular, $\imath:Z_{w|v}\to Z_{\euw|\euv}$ is a well 
defined isomorphism.
}
\end{construction}   
%
%
\begin{proposition}[{\bf Proposition~\ref{fkt01} re-revisited}]
\label{fktTopaz1} 
In the above notation, the following hold:
\vskip5pt
\itm{25}{
\item[{\rm1)}] The valuation $w_\NN\!:=w|_\NN\hb1$ satisfies
$w_\NN\geqslant\euw.$
\vskip2pt
\item[{\rm2})] $Z^a_\euw=D_{w_\NN}\!=p^a(D_w)$ \
and \ $T^a_\euw=p^a(I^a)\subset I_{w_\NN}\subset I_{D_\euw}.$  
\vskip2pt
\item[{\rm3)}] $\imath(Z_{w|v})=Z_{\euw|\euv}$ and 
$p^a(Z_{w^a|v})=Z_{\euw^a|\euv}.$ Further, for $g\in G(\FF|\EE)$ one has:
\vskip5pt
\centerline{$\gx(I^a)\!=\!I^a$ iff $\,\gx(D_{I^a})\!=\!D_{I^a}\,$ 
   iff $\,\gx(D_{w})=D_{w}$ 
   iff $\,\gx(Z^a_w)\!=\!Z^a_w$
      iff $\,\gx(w)\!=\!w\,$ iff $\,\gx\in Z_{w|v}.$}
} 
\end{proposition}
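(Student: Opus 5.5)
Our plan is to work throughout with the projection $p^a:\GGa\FF\to\GGa\NN$, which is dual to the inclusion $\NN^\times\subset\FF^\times$ under Kummer theory. We will use the functoriality of minimized inertia/decomposition groups (Fact~\ref{fkt3-4}), together with the identities $I^a\subset I_{D_{I^a}}=I_{D_w}$ and $D^a\subset D_{I^a}=D_w$ from Construction~\ref{construction}.

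For 1), set $w_\NN:=w|_\NN$. By Fact~\ref{fkt3-4}, $p^a(I_w)\subset I_{w_\NN}$ and $p^a(D_w)\subset D_{w_\NN}$. By Key Lemma~\ref{keylemma1}, $p^a(I^a)=T_{\euw^a|\euw}=I_\euw\neq1$ and $p^a(D^a)=Z_{\euw^a|\euw}=D_\euw$. Since $D^a\subset D_w$, we get $D_\euw\subset D_{w_\NN}$. Next, since $p^a$ maps c.l.\ pairs into c.l.\ pairs, $\clp{p^a(I_{D_w})}{p^a(D_w)}$ is a c.l.\ pair in $\GGa\NN$ containing $\clp{I_\euw}{D_\euw}$. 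By maximality of $\clp{I_{D_\euw}}{D_\euw}$ (Fact~\ref{fkt5},~2), valid because $\euw$ is discrete of rank one), this forces $p^a(D_w)=D_\euw$ and $p^a(I_{D_w})\subset I_{D_\euw}$.

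To conclude $w_\NN\geqslant\euw$, we argue as in Remark~\ref{unique}. Suppose $w_\NN\not\geqslant\euw$. Since $\euw$ has rank one, $w_\NN$ and $\euw$ are then either independent, or $w_\NN<\euw$ with $w_\NN$ trivial. In the trivial case $D_{w_\NN}=I_{w_\NN}=1$, which contradicts $1\neq I_\euw=p^a(I^a)\subset p^a(I_{D_w})$ (here one uses that $I_{D_w}$ and $I_w$ differ only through the $\ell$-residue part, and one checks directly that the image still lands in $D_{w_\NN}$). In the independent case, Lemma~\ref{indepval} gives $D_{w_\NN}\cap D_\euw\subset Z^a_{w_\NN}\cap Z^a_\euw=1$. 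But $D_\euw=p^a(D^a)\subset p^a(D_w)\subset D_{w_\NN}$, so $D_\euw\subset D_{w_\NN}\cap D_\euw=1$, which is a contradiction. Hence $w_\NN\geqslant\euw$.

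For 2), the relation $w_\NN\geqslant\euw$ and Fact~\ref{red-inr-dec-coars} give $D_{w_\NN}\subset D_\euw$ and $I_\euw\subset I_{w_\NN}$. Combining these with $D_\euw\subset p^a(D_w)\subset D_{w_\NN}$ yields $Z^a_\euw=D_\euw=D_{w_\NN}=p^a(D_w)$; here $Z^a_\euw=D_\euw$ holds by Fact~\ref{fkt4},~2), since $\chr(\NN\euw)\neq\ell$. For the inertia part, $T^a_\euw=I_\euw=p^a(I^a)\subset I_{w_\NN}$ holds via $I_\euw\subset I_{w_\NN}$. The inclusion $I_{w_\NN}\subset I_{D_\euw}$ holds because $\clp{I_{w_\NN}}{D_{w_\NN}=D_\euw}$ is a c.l.\ pair (Fact~\ref{fkt5},~1)), and $I_{D_\euw}$ is the maximal such group.

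For 3), the chain of equivalences in $G(\FF|\EE)$ is $(*)_w$ of Construction~\ref{construction}, extended by one more equivalence. For $g\in G(\FF|\EE)$: if $g(w)=w$, then $g$ fixes $D_w$ and hence fixes $I^a$, because $I^a$ is canonically determined inside $D_{I^a}$ via the section. Conversely, if $g(I^a)=I^a$, then $g(D_{I^a})=D_{I^a}$ by the uniqueness in Definition/Remarks~\ref{defrem1},~4). The statement $\imath(Z_{w|v})=Z_{\euw|\euv}$ follows by comparing $(*)_w$ with $(*)_\euw$: by part 2), $g(D_w)=D_w$ implies $h(D_\euw)=D_\euw$ after applying $p^a$, and conversely one lifts elements of $Z_{\euw|\euv}$ through the section $s^a$ and uses $D_w=D_{I^a}$. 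The statement $p^a(Z_{w^a|v})=Z_{\euw^a|\euv}$ then follows from $p^a(D_w)=D_\euw$ and the extensions $D\to Z\to Z_{\cdot|\cdot}$.

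The main obstacle is the inclusion $g(I^a)=I^a$ whenever $g(w)=w$, since $I^a$ is not obviously stabilized by all of $Z_{w|v}$. We would first try to reduce it to stability of the images under $p^a$ together with the surjectivity statement in Construction~\ref{construction}~a).
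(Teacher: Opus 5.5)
\paragraph{Gap in 1), case $w_\NN$ trivial.} For the trivial valuation, $1+\eum_{w_\NN}=\{1\}$, so $D_{w_\NN}=\Hom(\NN^\times,\mu_\ell)=\GGa\NN$, not $1$. Only $I_{w_\NN}=1$ is available.

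To contradict $I_{w_\NN}=1$ you need $p^a(I_w)\neq1$, because Fact~\ref{fkt3-4} gives $p^a(I_w)\subset I_{w_\NN}$ and says nothing about $p^a(I_{D_w})$. All you know is $I^a\subset I_{D_{I^a}}=I_{D_w}$, and $I_{D_w}$ can be strictly larger than $I_w$. This happens exactly when $D_w/I_w$ is cyclic, by Theorem~\ref{Topaz},~1)a). Your parenthetical remark (the image lands in $D_{w_\NN}$) gives nothing, since $D_{w_\NN}$ is the whole group.

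The missing ingredient is the case split of Theorem~\ref{Topaz},~1), which is how the paper argues:
\begin{itemize}
\item If $D_w$ is not c.l., then $I_{D_w}=I_w$, so $p^a(I_w)\supset p^a(I^a)=T^a_\euw\neq1$.
\item If $D_w$ is c.l., then $D_w/I_w$ is cyclic, hence so is $p^a(D_w)/p^a(I_w)$. But $p^a(D_w)$ contains the non-cyclic group $p^a(D^a)=Z^a_\euw$, so again $p^a(I_w)\neq1$.
\end{itemize}
Either way $1\neq p^a(I_w)\subset I_{w_\NN}$, so $w_\NN$ is non-trivial.

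With that in place, the rest of your argument holds up:
\begin{itemize}
\item Your independent case ($Z^a_\euw=p^a(D^a)\subset p^a(D_w)\subset D_{w_\NN}\subset Z^a_{w_\NN}$, contradicting Lemma~\ref{indepval}) is correct, and a bit more direct than the paper's route through $I'$.
\item Your proof of 2) is the paper's.
\item Your early claim that maximality of $I_{D_\euw}\leqslant D_\euw$ already forces $p^a(D_w)=D_\euw$ does not follow: you only have $p^a(I_{D_w})\supset T^a_\euw$, not $\supset I_{D_\euw}$. It is harmless, since you derive the equality correctly in 2) once $w_\NN\geqslant\euw$ is known.
\end{itemize}

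\paragraph{Part 3).} Your reason for $g(w)=w\Rightarrow g(I^a)=I^a$ is not valid. $I^a=q^a_\FF\big(s^a(T_{\tilde\euv^a|\euw})\big)$ depends on the section and is not determined by $D_{I^a}$, as you suspect yourself.

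The lifting device the paper uses for $\imath(Z_{w|v})=Z_{\euw|\euv}$ closes this:
\begin{itemize}
\item From $g(D_w)=D_w$ and $p^a(D_w)=D_\euw$, you get $\imath(g)(D_\euw)=D_\euw$, hence $\imath(g)\in Z_{\euw|\euv}$ by $(*)_\euw$ of Construction~\ref{construction}.
\item Lift $\imath(g)$ to some $h'\in Z_{\tilde\euv^a|\euv}\subset G(\tilde L^a|L)$. Conjugation by $h'$ normalizes $T_{\tilde\euv^a|\euw}=T_{\tilde\euv^a|\euv}\cap G(\tilde L^a|N)$.
\item By the Claim at the start of Section~3, $q^a_E(s^a(h'))\in G(F^a|E)$ lies over $g$. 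Since $\GGa\FF$ is abelian, conjugation by this element is the action of $g$, so $g(I^a)=I^a$.
\end{itemize}
The same computation for an arbitrary $h\in Z_{\euw|\euv}$ gives $g'(D_{I^a})=D_{I^a}$ for $g'=\imath^{-1}(h)$. Via $(*)_w$ this yields the reverse inclusion $Z_{\euw|\euv}\subset\imath(Z_{w|v})$.
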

\begin{proof} To 1): We first notice that the surjectivity of $p^a$ 
and $p^c$ imply that for every c.l.\ pair $\clp{I'}{D'}$ in $\GGa\FF$ 
with $p^a(I')\neq1$ and $p^a(D')$ non-cyclic, the image 
$\clp{p^a(I')}{p^a(D')}$ of $\clp{I'}{D'}$ under $p^a$ is a c.l.\ pair 
in $\GGa\NN.$ (Actually, by mere definitions one has: If $\sigma,\tau$ 
is a c.l.\ pair in $\GGa\FF,$ then $p^a(\sigma),p^a(\tau)$ is a 
c.l.\ pair in $\GGa\NN,$ provided $p^a(\sigma),p^a(\tau)$ are 
independent in $\GGa\NN.$) Therefore, $\clp{p^a(I_{D_w})}
{p^a(D_w)}$ is a c.l.\ pair in $\GGa\NN$, and 
$I^a\subset I_{D_{I^a}}=I_{D_w}$, $D^a\subset {D_{I^a}}=D_w$ implies:
\vskip5pt
\centerline{$T^a_\euw=p^a(I^a)\subset p^a(I_{D_w})$ \ \ and \ \
$Z^a_\euw=p^a(D^a)\subset p^a(D_w).$}
\vskip5pt
\noindent
We proceed case-by-case as follows:
\vskip2pt
(I) First, suppose that $D_{I^a}=D_w$ is c.l. Then by Theorem~\ref{Topaz} 
one has that $D_w/I_w$ is cyclic. Thus $p^a(D_w)/p^a(I_w)$ 
is cyclic, hence so is $Z^a_\euw/I'\subset p^a(D_w)/p^a(I_w)$,
where $I'=p^a(I_w)\cap Z^a_\euw.$ Since $Z^a_\euw$ is not 
cyclic, we conclude that $I'\neq1$, and therefore 
$p^a(I_w)\supset I'\neq1.$ 
On the other hand, by basics of Hilbert decomposition theory, one 
has $p^a(I_w)\subset I_{w_\NN}$, thus finally implying 
$I_{w_\NN}\supset p^a(I_w)\supset I'\neq1.$ Therefore, $w_\NN$ 
is non-trivial.
\vskip2pt
(II) Second, suppose that $D_{I^a}=D_w$ is not c.l. Then by Theorem~\ref{Topaz}
we have that $I_w=I_{D_{I^a}}.$ Hence since $I^a\subset I_{D_{I^a}}=I_w$
and $p^a(I^a)=T^a_\euw$, it follows that $p^a(I_w)\supset T^a_\euw\neq1\vrg$
and finally, as above, $I_{w_\NN}\supset p^a(I_w)\neq1.$ Therefore, $w_\NN$ 
is non-trivial.
\vskip5pt
Further, by basics of Hilbert decomposition theory, one 
has that the decomposition group $Z^a_{w_\NN}$ satisfies
$Z^a_{w_\NN}\supset p^a(Z_w)$, and therefore, 
$I'\subset Z^a_\euw\cap Z^a_{w_\NN}$ in case~(I), respectively
$T^a_\euw\subset Z^a_\euw\cap Z^a_{w_\NN}$ in case~(II).
Therefore, by Proposition~\ref{fkt01}, it follows that $\euw$ and $w_\NN$
are not independent. Since $w_\NN$ is non-trivial and $\euw$
has rank one (being discrete), if follows that $\euw\leqslant w_\NN.$
\vskip5pt
To 2): Taking into account that $\euw\leqslant w_N$ by assertion~1) 
above, using e.g., {\sc Topaz}~\cite{To1}, Lemma~4.1, one gets: 
Since $\euw\leqslant w_N$, i.e., $w_N$ is a refinement of $\euw$, 
one has that 
\vskip5pt
\centerline{$Z^a_{w_N}\subset Z_\euw$, $T^a_{w_N}\supset T_\euw$,
and further, $D_{w_N}\subset D_{\euw}$, $I_{w_N}\supset I_{\euw}$, 
etc.}
\vskip5pt
\noindent
On the other hand, by functoriality of (reduced) Hilbert 
decomposition theory, $p^a(D_w)\subset D_{w_\NN}.$ 
Hence since $Z^a_\euw=D_\euw= p^a(D^a)$ and $D^a\subset D_w$, 
putting everything together, we get:
\vskip6pt
\centerline{$Z^a_\euw=D_\euw=p^a(D^a)\subset p^a(D_w)\subset
       D_{w_\NN}\subset D_\euw=Z^a_\euw$,}
\vskip5pt
\noindent
concluding that $Z^a_\euw=D_\euw=p^a(D_w)=D_{w_\NN}.$ 
\vskip2pt
Concerning the assertion about (reduced) inertia, 
let $I_{D_\euw}\subset D_\euw=p^a(D)$ be the unique maximal 
subgroup of $\GGa\NN$ such that $\clp{I_{D_\euw}}{D_{\euw}}$ 
is a c.l.\ pair in $\GGa\NN.$ Since $D_{w_\NN}=D_\euw$, and 
$\clp{I_{w_\NN}}{D_{w_\NN}}$ is a c.l.\ pair, it follows that 
$I_{w_N}\subset I_{D_\euw}.$ Conclude that 
$T^a_\euw=p^a(I)\subset I_{w_\NN}\subset I_{D_\euw}.$ 
\vskip4pt
To 3): Since $p^a(D_{w})=D_\euw$, taking into account 
assertions $(*)_\euw$ and $(*)_w$ above, by mere definitions one has  
$\imath(Z_{w|v})\subset Z_{\euw|\euv}$. For the converse 
inclusion, proceed as follows: By assertion~$(*)_\euw$, one 
has that $h\in Z_{\euw|\euv}$ iff $h(I_{D_\euw})=I_{D_\euw}.$
Let $h'\mapsto h$ under $G(\NN^a|\LL)\srjr G(\NN|\LL)$, 
and denote $g'=s^a(h')$. Then by mere definitions one has: 
$\tau\in D_{I^a}$ iff $\forall\,\sigma\in I\!:=s^a(I_{D_\euw})$ have: 
$\sigma,\tau$ is a c.l.\ pair in $\GGa\FF$. Obviously, since the inner 
conjugation by $g'$ is an automorphism of $\GGa\FF$ which 
lifts to the inner conjugation in $\GGc\FF,$ the latter assertion is 
equivalent to $\sigma'\!,\tau'$ being a c.l.\ pair in $\GGa\FF$, 
where $\sigma'\!=g'\sigma\,g'^{-1}$ and $\tau'\!=g'\tau g^{-1}\!$. 
On the other hand, $\sigma\mapsto\sigma'\!:=g'\sigma g'^{-1}$ 
is an automorphism of $I_{D_\euw}$ (because $g'I_{D_\euw}\,g'^{-1}=
g'(I_{D_\euw})=I_{D_\euw}$). Hence $\tau\in D_{I^a}$ iff 
$\tau'\!:=g'\hb1\tau\ha1g'^{-1}\in D_{I^a}$. Thus $g'D_{I^a}g'^{-1}=D_{I^a}$,
that is, $g'(D_{I^a})=D_{I^a}$. Since $g \in Z_{\euw|\euv}$ was arbitrary and 
$g'(D_{I^a})=D_{I^a}$, conclude by Fact~\ref{fktTopaz} that 
$\imath^{-1}(\tau) \in Z_{w|v}$ under the isomorphism 
$\imath:G(\FF|\EE)\to G(\NN|\LL)$. Thus
$\imath(Z_{w|v})=Z_{\euw|\euv}$ as claimed.
\end{proof} 
%
%
%
\subsection{\bf Canonical $s^\star$-valuations and their
functorial behavior}$\ha0$
\vskip1pt
In the context of Proposition~\ref{fktTopaz1} above, recall that 
given valuations $\euw|\euv$ of $\NN|\LL$, $\euv\in\Val^1(\LL)$, 
via the section $s^\star$ of $p^\star$ one gets valuations 
$w|v$ of $\FF|\EE$ satisfying $w_\NN\!:=w|_{\NN}\geqslant\euw$ 
and $v_\LL\!:=w|_\LL\geqslant\euv.$ In particular, 
Fact~\ref{fkt-c-val} above applies in this context, and one has:
\vskip2pt
\itm{25}{
\item[1)] the canonical $\euw\!$-valuations and/or $\euv\!$-valuations
of $\FF\!$, which turn out to be equal $w_\euw=w_\euv.$ Indeed,
this follows by Fact~\ref{fkt-c-val},~3), because $\NN|\LL$ is an
algebraic extension.  
\vskip2pt
\item[2)] The canonical $\euw^a\!$- and $\euw\!$- and $\euv$-valuations
of $\FF^a$ are equal $w^a_{\euw^a}=w^a_\euw=w^a_\euv$ and
prolong $w_\euw=w_\euv$ to $\FF^a.$ Indeed, this follows from 
Fact~\ref{fkt-c-val},~3), because $\FF^a|\FF$ is algebraic. 
\vskip2pt
\item[] And since $\FF^a|\FF|\EE$ are Galois extensions, all the 
above valuations have the same restriction to~$\EE\!$, denoted 
$v_\euv\!:=(w^a_{\euw^a})|_\EE=(w^a_\euw)|_\EE=(w_\euw)|_\EE$, etc.
}
%
%
\begin{definition}
\label{def-can-val}
In the above context, the valuations $w^a_{\euw^a}=w^a_\euw=w^a_\euv$ 
of $\FF^a$ and $w_\euw=w_\euv$ of~$\FF$ and $v_\euv$ of $\EE$ are 
called {\it canonical $s^\star\!$-valuations} of $\FF^a|\FF|\EE$ (defined by 
$\euw^a|\euw|\euv$
via $s^\star$). 
\vskip2pt\noindent
\bltt\ \ We notice the following: Since $\chr(\NN\euw)=\chr(\LL\euv)\neq\ell$, 
$\euv\in\Val^1(\LL)$ and $(w_\euw)|_\NN=\euw$, one has that 
$\chr(\FF w_\euw)\neq\ell.$ In particular, one also has 
$D_{w_\euw}=Z^a_{w_\euw}$ and $I_{w_\euw}=T^a_{w_\euw}$,
etc.
\end{definition}
\newpage
%
\begin{fact}
\label{fkt-can-val}
{\it The $s^\star\!$-canonical valuations $w_{\euw}|v_\euv$ arising from 
$\euw|\euv$, $\euv\in\Val^1(\LL)$ satisfy: 
\vskip5pt
\centerline{\rm (i) $\clO_{w_\euw}^\times\!\supset\clO_{w}^\times$, 
$1+\eum_{w_\euw}\!\subset1+\eum_{w}$;  \ \ \
(ii) $\clO_{w_\euw}^\times\cap\NN=\clO^\times_\euw$, 
$(1+\eum_{w_\euw})\cap\NN=1+\eum_\euw.$}
\vskip5pt
\noindent 
Further, (by mere definitions) one has:
\vskip2pt
\itm{25}{
\item[{\rm1)}] $s^a(T^a_\euw)\subset T^a_{w_\euw}\subset I_w$, 
$
Z^a_{w_\euw}=D_w\supset s^a(Z^a_\euw)$, and
$Z_{w^a_\euw|v_\euv}= Z_{w^a|v}\supset s^a(Z_{\euw^a|\euv}).$
\vskip2pt
\item[{\rm2})] $p^a(T^a_{w_\euw})=T^a_\euw$, $p^a(Z^a_{w_\euw})=Z^a_\euw$, 
and $p^a(Z_{w^a_\euw|v_\euv})=Z_{\euw^a|\euv}.$ Thus 
$\imath(Z_{w_\euw|v_\euv})=Z_{\euw|\euv}.$ 
}
}
\end{fact}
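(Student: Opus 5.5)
The plan is to read off everything from how the canonical valuation $w_\euw$ is built. By Proposition~\ref{fktTopaz1},~1) we have $w_\NN:=w|_\NN\geqslant\euw$, so Lemma~\ref{can-val-abstr} and the Definition following it apply with $\Ox=\FF$, $\Oy=\NN$, $v=\euw$. They give $w_\euw$ as the unique minimal coarsening of $w$ with $w_\euw|_\NN=\euw$; in particular $w_\euw\leqslant w$.

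\emph{(i) and (ii).} Since $w_\euw\leqslant w$, Fact~\ref{red-inr-dec-coars} (really its proof, i.e.\ conditions (ii), (ii)$'$ there) gives $\clO^\times_{w_\euw}\supset\clO^\times_w$ and $1+\eum_{w_\euw}\subset 1+\eum_w$. Assertion (ii) is Fact~\ref{fkt-c-val},~1) applied to $w_\euw|_\NN=\euw$.

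\emph{Assertion 1).} By Definition~\ref{def-can-val}, $\chr(\FF w_\euw)\neq\ell$, so Fact~\ref{fkt4},~2) gives $Z^a_{w_\euw}=D_{w_\euw}$ and $T^a_{w_\euw}=I_{w_\euw}$. Fact~\ref{red-inr-dec-coars} then yields $I_{w_\euw}\subset I_w$ and $D_{w_\euw}\supset D_w$. For the reverse inclusion $D_{w_\euw}\subset D_w$ I would use Proposition~\ref{fktTopaz1},~2), which says $p^a(D_w)=Z^a_\euw$. Suppose $D_w\subsetneq D_{w_\euw}$. Then $w/w_\euw$ is a non-trivial valuation on $\FF w_\euw$ whose restriction to $\NN\euw$ is trivial, because $w_\euw$ is the minimal coarsening restricting to $\euw$. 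I would try to show that this contradicts $w\in\clW_\FF$, condition (i), combined with the maximality of $\clp{I_{D_w}}{D_w}$. I expect this to be the main obstacle. If it resists, I would instead \emph{define} the comparison through Proposition~\ref{fktTopaz1},~3): the stabilizers of $D_w$ and of $Z^a_{w_\euw}$ in $G(\FF|\EE)$ both map isomorphically onto $Z_{\euw|\euv}$ under $\imath$. Together with Proposition~\ref{fkt01},~1) (injectivity of $w'\mapsto Z^a_{w'}$ on rank-one-cored families), this should force equality.

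The inclusions involving $s^a$ are more direct. By construction, $q^a_\FF(s^a(T_{\tl\euv^a|\euw}))=I^a\subset I_{D_w}$. Moreover $s^a(T^a_\euw)$ acts trivially on the residue field of $w_\euw$ and on $\clO^\times_{w_\euw}/\ell$. The reason is that the Kummer generators $\root\ell\of{u}$ with $u\in\clO^\times_{w_\euw}$ have images in $\NN$ lying in $\clO^\times_\euw$, by (ii), and $T^a_\euw$ fixes $\NN[\root\ell\of{\clO^\times_\euw}]$. This gives $s^a(T^a_\euw)\subset T^a_{w_\euw}$, and the analogous argument with $1+\eum$ gives $s^a(Z^a_\euw)\subset D_w$. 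For the statement on $Z_{w^a|v}$, note that $Z^a_{w_\euw}=D_w$ is normalized exactly by $Z_{w^a|v}$, by Proposition~\ref{fktTopaz},~1) and Construction~\ref{construction},~b). Since a prolongation of $w_\euw$ is determined by $w$, we get $Z_{w^a_\euw|v_\euv}=Z_{w^a|v}$, and this contains $s^a(Z_{\euw^a|\euv})$ by Proposition~\ref{fktTopaz1},~3).

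\emph{Assertion 2).} Apply $p^a$ to 1), using $p^a\circ s^a={\rm id}$ and functoriality (Fact~\ref{fkt3-4}). We have $T^a_\euw=p^a s^a(T^a_\euw)\subset p^a(T^a_{w_\euw})\subset T^a_\euw$, where the last inclusion holds because $w_\euw|_\NN=\euw$. The same squeeze works for $Z^a$ and for $Z_{w^a_\euw|v_\euv}$. Finally, projecting to $G(\FF|\EE)$ and using the isomorphism $\imath$ gives $\imath(Z_{w_\euw|v_\euv})=Z_{\euw|\euv}$, in agreement with Proposition~\ref{fktTopaz1},~3).
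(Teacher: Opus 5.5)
The real gap is the one step of the Fact that is not formal: the reverse inclusion $Z^a_{w_\euw}=D_{w_\euw}\subset D_w$. You leave it unproved, and neither of your two routes can supply it.

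In the first route, the claim that $w/w_\euw$ is trivial on $\NN\euw$ is unfounded. Its restriction there is $w_\NN/\euw$, and Proposition~\ref{fktTopaz1},~1) only gives $w_\NN\geqslant\euw$; minimality of $w_\euw$ says nothing about $w_\NN$. More fundamentally, a proper coarsening with $D_w\subsetneq D_{w_\euw}$ is perfectly compatible with $w\in\clW_\FF$, since condition~(i) only asks $I_{w/w_\euw}\neq1$. So no contradiction can come from properties of $w$ alone.

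In the second route, equal stabilizers in $G(\FF|\EE)$ do not force equal subgroups. Also, Proposition~\ref{fkt01},~1) concerns distinct \emph{incomparable} prolongations, whereas here $w_\euw\leqslant w$.

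The missing idea is to use how $w$ was produced from $I^a$. By Construction~\ref{construction}, $D_w=D_{I^a}$ is the \emph{largest} subgroup forming a c.l.\ pair with $I^a$, by Definition/Remarks~\ref{defrem1},~4)\,b). Since $\chr(\FF w_\euw)\neq\ell$ and $\mu_\ell\subset\FF$, $D_{w_\euw}$ acts on $I_{w_\euw}$ through the cyclotomic character. So $\clp{I_{w_\euw}}{D_{w_\euw}}$ is a c.l.\ pair by Fact~\ref{fkt5},~1). Hence, once $I^a\subset I_{w_\euw}$ is known, every $\sigma\in I^a$, $\tau\in D_{w_\euw}$ is a c.l.\ pair, and maximality gives $D_{w_\euw}\subset D_{I^a}=D_w$. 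This is exactly the paper's argument. Your $a$-level equality $Z_{w^a_\euw|v_\euv}=Z_{w^a|v}$ also rests on this step.

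That argument takes $I^a=s^a(T^a_\euw)\subset T^a_{w_\euw}$ as input, and your Kummer derivation of it does not work. The same goes for your derivation of $s^a(Z^a_\euw)\subset D_w$. The section $s^a$ is purely group-theoretic and is induced by no field map $\FF\to\NN$, so an element $u\in\clO^\times_{w_\euw}$ has no ``image in $\NN$''. The identity $p^a\circ s^a=\mathrm{id}$ only tells you how $s^a(\sigma)$ acts on $\root\ell\of u$ for $u\in\NN^\times$, and (ii) only concerns such $u$; nothing follows for $u\in\clO^\times_{w_\euw}$ outside $\NN$.

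The inclusion $s^a(Z^a_\euw)=D^a\subset D_{I^a}=D_w$ needs no Kummer theory, since it is built into the construction. The inclusion $I^a\subset T^a_{w_\euw}$ is recorded in the paper as following from the definitions, but whatever its justification, it cannot be obtained by Kummer theory on elements of $\FF$ as you propose.

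Your treatment of (i) and (ii), of $T^a_{w_\euw}\subset I_w$ and $D_w\subset D_{w_\euw}$, and of the squeeze in~2) is fine.
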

\begin{proof} To 1): The inclusions $s^a(T^a_\euw)\subset T^a_{w_\euw}\subset I_w$
and $Z^a_{w_\euw}\supset D_w\supset s^a(T^a_\euw)$ are clear by mere definitions. 
For the converse inclusion $Z^a_{w_\euw}\subset D_w$ one has: Since
$I^a\subset T^a_{w_\euw}=I_{w_\euw}$ and $Z^a_{w_\euw}=D_{w_\euw}$ 
acts on $I^a_{w_\euw}$ via the cyclotomic character, the following hold:
$\forall\,\sigma\in I^a$ and $\forall\,\tau\in Z^a_{w_\euw}$ one has that 
$\sigma,\tau$ is a c.l.\ pair in $\GGa\FF.$ On the other hand, by definition
one has that $D_w=D_{I^a}.$ Thus finally it follows that 
$D_{w_\euw}\subset D_{I^a}\subset D_w$, as claimed. 
\vskip2pt
To 2): \ The equality $p^a(D_{w_\euw})=D_\euw$ follows from
$p^a(D_w)=D_\euw\!$, cf.\  Proposition~\ref{fktTopaz1}, 1). The other
equalities follow along the same lines using loc.cit.\ 2) and 3). 
\end{proof}
Next, in the notation from the previous subsections, let 
$(\tl\LL|\NN_\ia|\LL_1|\LL)_\ia$ be the family of finite Galois 
subextensions of $\tl\LL|\LL$ with $\LL_1\subset\NN_\ia.$ We 
notice that this family is (filtered) partially ordered by $\ia\leqslant\jb$ 
iff $\NN_\ia\subset\NN_\jb$, thus setting $\FF_\ia\!:=\EE\NN_\ia$,
one has the resulting (filtered) partially ordered family 
of finite Galois subextensions $\tl\EE|\FF_\ia|\EE_1|\EE.$ 
Finally, let $p^\star_\ia:G(\FF_\ia^\star|\EE)\to 
G(\NN_\ia^\star|\LL)$ be the resulting projections, thus
$p^\star:G(\tl\EE^\star|\EE)\to G(\tl\LL^\star|\LL)$ is the (porjective)
limit of $(p^\star_\ia)_\ia.$
\vskip2pt
For $\euv\in\Val^1(\LL)$ and its prolongations
$\tl\euv|\euw_\ia|\euv_1|\euv$ to $\tl\LL|\NN_\ia|\LL_1|\LL$, let 
$w_\ia|v_\ia\in\clW_{\FF_\ia|\EE}$~be~the valuation defined 
via $s^\star$ and $\euv\in\Val^1(\LL)$ as indicated above. 
In particular, $w_\ia$ and the resulting 
$w_\ia|_{\NN_\ia}\geqslant\euw_\ia$ give rise to the canonical
$\euw_\ia$-valuation $w_{\euw_\ia}$ of $\FF_\ia$, which is the 
minimal valuation $w'_\ia$ of $\FF_\ia$ such that $w'_\ia\leqslant w_\ia$ 
and $w'_\ia|_{\NN_\ia}=\euw_\ia.$ Further, for 
$\NN_\ia\subset\NN_\jb$ and the resulting $\FF_\ia\subset\FF_\jb$, 
let $p^\star_{\jb\ia}:G(\FF^\star_\jb|\EE)\to G(\FF^\star_\ia|\EE)$ be
the resulting canonical projections, thus 
$p_\jb^\star=p^\star_\ia\circ p^\star_{\jb\ia}.$ And recalling the 
notation introduced in Construction~\ref{construction}, set 
$I^a_\ia\!:=s_\ia(I_\euw)$, $D^a_\ia=s_\ia(D_\euw)$ and notice that 
$p_\jb^\star=p^\star_\ia\circ p^\star_{\jb\ia}$ implies:
\[
p^a_{\jb\ia}(I^a_\jb)=I^a_\ia, \ \ \ \ \ p^a_{\jb\ia}(D^a_\jb)=D^a_\ia.
\leqno{\indent(*)_{\jb\ia}}
\]
%
%
\begin{keylemma}[{\bf Functoriality of $s^\star\!$-canonical 
valuations}] 
\label{key-lemma}
For $\FF_\ia\subset\FF_\jb$, let $w_{\euw_\ia}\in\Val(\FF_\ia)$ and 
$w_{\euw_\jb}\in\Val(\FF_\jb)$ be the corresponding 
$s^\star\!$-canonical valuations. Then $w_{\euw_\jb}|_{\FF_\ia}=w_{\euw_\ia}.$
\end{keylemma}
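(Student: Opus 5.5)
Set $w'\!:=w_{\euw_\jb}|_{\FF_\ia}$. We will show that $w'$ has the two properties that characterize $w_{\euw_\ia}$: it is a coarsening of $w_\ia$, and it restricts to $\euw_\ia$ on $\NN_\ia$. By the minimality built into the definition of the canonical valuation (Lemma~\ref{can-val-abstr} and the Definition after it), this gives $w_{\euw_\ia}\leqslant w'$. Then Fact~\ref{fkt-c-val} applied to the algebraic extension $(\FF_\jb|\NN_\jb)\,|\,(\FF_\ia|\NN_\ia)$ will give the reverse comparison.

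\emph{Step 1: $w'|_{\NN_\ia}=\euw_\ia$.} Since $w_{\euw_\jb}|_{\NN_\jb}=\euw_\jb$ and $\euw_\jb|_{\NN_\ia}=\euw_\ia$ (the prolongations were chosen compatibly along $\tl\euv$), this is immediate.

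\emph{Step 2: $w'$ and $w_\ia$ are comparable.} Here we use $(*)_{\jb\ia}$, namely $p^a_{\jb\ia}(I^a_\jb)=I^a_\ia$ and $p^a_{\jb\ia}(D^a_\jb)=D^a_\ia$. By Fact~\ref{fkt-can-val},1) we have $I^a_\jb\subset T^a_{w_{\euw_\jb}}$ and $D^a_\jb\subset Z^a_{w_{\euw_\jb}}$. By functoriality of Hilbert decomposition (Fact~\ref{fkt3-4}), $p^a_{\jb\ia}$ maps $T^a_{w_{\euw_\jb}}$ into $T^a_{w'}$ and $Z^a_{w_{\euw_\jb}}$ into $Z^a_{w'}$. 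Hence $1\neq I^a_\ia\subset T^a_{w'}\cap I_{w_\ia}$, so $w'$ is non-trivial and $Z^a_{w'}\cap Z^a_{w_\ia}\neq1$. By Lemma~\ref{indepval}, $w'$ and $w_\ia$ are dependent, so they have a common non-trivial coarsening. Both restrict on $\NN_\ia$ to something $\geqslant\euw_\ia$, which is discrete of rank one. Applying the argument of Proposition~\ref{fktTopaz1},1) to the coarsening $\min(w',w_\ia)$ shows that this coarsening still restricts to $\geqslant\euw_\ia$. Consequently $w_{\euw_\ia}\leqslant\min(w',w_\ia)\leqslant w'$.

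\emph{Step 3: reverse inequality $w'\leqslant w_{\euw_\ia}$.} By Lemma~\ref{can-val-abstr}, the restriction of $w'$ to $\NN_\ia$ is exactly $\euw_\ia$, so $\eum_{w'}\cap\NN_\ia=\eum_{\euw_\ia}$. Let $w''$ be the prolongation of $w_{\euw_\ia}$ to $\FF_\jb$ that is dominated by $w_{\euw_\jb}$; it exists because $w_{\euw_\ia}\leqslant w'$ and $\FF_\jb|\FF_\ia$ is algebraic. Since $\NN_\jb|\NN_\ia$ is algebraic and $w''|_{\NN_\ia}=\euw_\ia$, we get $w''|_{\NN_\jb}=\euw_\jb$ by the argument of Fact~\ref{fkt-c-val},2). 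Therefore $w''$ is a coarsening of $w_{\euw_\jb}$ restricting to $\euw_\jb$, and minimality of $w_{\euw_\jb}$ forces $w''=w_{\euw_\jb}$. Restricting back to $\FF_\ia$ gives $w'=w_{\euw_\ia}$.

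The main obstacle is Step~2. The difficulty is to compare $w'$ with $w_\ia$ rather than with some unrelated valuation carrying the same reduced groups. The route I would try first is to use that the c.l.\ pair $\clp{I^a_\ia}{D^a_\ia}$ lies in both $T^a_{w'}\leqslant Z^a_{w'}$ and $I_{w_\ia}\leqslant D_{w_\ia}$. I would then invoke the uniqueness argument of Remark~\ref{unique}, via Proposition~\ref{fkt01} on the residue field of $\min(w',w_\ia)$. If $w'$ and $w_\ia$ were incomparable, the images of $I^a_\ia$ would give a non-trivial element in the intersection of the decomposition groups of two independent valuations, contradicting Lemma~\ref{indepval}.
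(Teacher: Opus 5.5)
Your Steps~1 and~3, together with the dependence half of Step~2, are the paper's proof. The paper runs the same argument with $w^0=\min(w',w_{\mathfrak w_\alpha})$ in place of your $u:=\min(w',w_\alpha)$; either choice works, and its Step~3 is your Step~3 in substance.

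The one non-formal point is the sentence you delegate to ``the argument of Proposition~\ref{fktTopaz1},~1)'': that $u$ is non-trivial on $N_\alpha$. This does not follow from $w',w_\alpha$ being dependent and each restricting to something $\geqslant\mathfrak w_\alpha$. Valuation theory alone allows two dependent valuations with the same discrete restriction to $N_\alpha$ whose common coarsening is trivial on $N_\alpha$. You need the Galois input $I^a_\alpha\subseteq I_u$, which you can get as follows:
\begin{itemize}
\item If $u<w',w_\alpha$ strictly, Fact~\ref{fkt0},~2) gives $\mathcal O_{w'}^\times\mathcal O_{w_\alpha}^\times=\mathcal O_u^\times$. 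Kummer theory then gives $I_u=I_{w'}\cap I_{w_\alpha}\supseteq I^a_\alpha$. If $w'$ and $w_\alpha$ are comparable, this containment is trivial.
\item Fact~\ref{fkt3-4} then gives $I_{u|_{N_\alpha}}\supseteq p^a_\alpha(I^a_\alpha)=T^a_{\mathfrak w_\alpha}\neq1$, so $u|_{N_\alpha}$ is non-trivial.
\item Since $u\leqslant w'$, you have $u|_{N_\alpha}\leqslant w'|_{N_\alpha}=\mathfrak w_\alpha$, which has rank one. Hence $u|_{N_\alpha}=\mathfrak w_\alpha$, and minimality gives $w_{\mathfrak w_\alpha}\leqslant u\leqslant w'$.
\end{itemize}
The paper asserts the analogous containment $I_{w^0}\supseteq I^a_\alpha$ without comment; the argument above is its justification.

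Your closing paragraph misplaces the difficulty, and the route it proposes would fail. Comparability of $w'$ and $w_\alpha$ is never needed, since Step~3 only uses $w_{\mathfrak w_\alpha}\leqslant w'$. Moreover, if $w'$ and $w_\alpha$ were incomparable, the images of $I^a_\alpha$ in the residue Galois group $Z^a_u/T^a_u$ would be trivial, because $I^a_\alpha\subseteq I_u\subseteq T^a_u$. So they give no non-trivial element of $Z^a_{w'/u}\cap Z^a_{w_\alpha/u}$, and Lemma~\ref{indepval} yields no contradiction. In Remark~\ref{unique} the corresponding images were non-trivial only because $I_{w_i}/I_w\neq1$ for $w_i\in\mathcal W_F$. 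You do not know $w'\in\mathcal W_{F_\alpha}$, so that input is unavailable here. Drop that paragraph and replace the vague sentence in Step~2 by the inertia argument above.
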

\begin{proof} Let $w'_\ia\!:=w_{\euw_\jb}|_{\FF_\ia}$ and set
$w^0=\min(w'_\ia\vrg w_{\euw_\ia})\in\Val(\FF_\ia)$. 
\vskip2pt
\underbar{Step 1}. We claim that $w^0$ is non-trivial, or equivalently, 
$w'_\ia$ and $w_{\euw_\ia}$ are not independent. Indeed, 
by Fact~\ref{fkt-can-val},~1), one has 
$I^a_\ia=s_\ia^a(I_{\euw_\ia})\subset I_{w_{\euw_\ia}}\!$, and by
assertion~$(*)_{\jb\ia}$ above 
one has $p^a_{\jb\ia}(I^a_\jb)=I^a_\ia\subset p^a_{\jb\ia}(I_{w_{\euw_\jb}}).$
Thus since $w'_\ia=w_{\euw_\jb}|_{\FF_\ia}$, by Fact~\ref{fkt3-4},
one has  $p^a_{\jb\ia}(I_{w_{\euw_\jb}})\subset I_{w'_\ia}.$
Hence $1\neq I^a_\ia\subset I_{w'_\ia}\cap I_{w_{\euw_\ia}}\!$, 
and therefore, by Proposition~\ref{fkt01},~1), it follows that $w'_\ia$ 
and $w_{\euw_\ia}$ are not independent, as claimed. 
\vskip2pt
\underbar{Step 2}. We claim that $\euw^0\!:=w^0|_{\NN_\ia}$ 
is non-trivial. Indeed, by Fact~\ref{fkt3-4}, one has 
$p_\ia^a(I_{w^0})\subset I_{\euw^0}.$ Thus $I_{w^0}\supset I^a_\ia$ 
implies $I_{\euw^0}\supset p_\ia^a(I_{w^0})\supset p_\ia^a(I^a_\ia)=I_{\euw_\ia}\neq1$,
and therefore,  $w^0$ is non-trivial. Second, since $w^0\leqslant w_{\euw_\ia}$, 
one has $\euw^0=w^0|_{\NN_\ia}\leqslant w_{\euw_\ia}|_{\NN_\ia}=\euw_\ia.$ 
Thus since $\euw_\ia$ is discrete and $\euw^0\leqslant \euw_\ia$ 
is nontrivial, one must have 
$\euw^0=\euw_\ia$. Finally, since $w^0\leqslant w_{\euw_\ia}\leqslant w_\ia$
and $w^0|_{\NN_\ia}=\euw_\ia\!$, by the definition of the canonical 
$\euw_\ia$ valuation $w_{\euw_\ia}$ one must have $w^0=w_{\euw_\ia}.$
Hence since $w^0=\min(w'_\ia\vrg w_{\euw_\ia})$ and $w^0=w_{\euw_\ia}$,
conclud that $(w_{\euw_\jb})|_{\FF_\ia}=w'_\ia\geqslant w_{\euw_\ia}.$
\vskip2pt
\underbar{Step 3}. \ Using Lemma~\ref{can-val-abstr}, let 
$w'_\jb\leqslant w_{\euw_\jb}$ be the minimal with
$w'_\jb|_{\FF_\ia}=w_{\euw_\ia}.$ Then $w'_\jb\leqslant w_{\euw_\jb}$
implies $\euw'_\jb=w'_\jb|_{\NN_\jb}\leqslant w_{\euw_\jb}|_{\NN_\jb}=\euw_\jb.$
Since $\euw'_\jb\leqslant\euw_\jb$ both prolong $\euw_\ia$ to the 
algebraic extension $\NN_\jb|\NN_\ia$, we must have $\euw'_\jb=\euw_\jb.$
Thus by the definition of the 
canonical $\euw_\jb$-valuation $w_{\euw_\jb}$ one must have
$w_{\euw_\jb}\leqslant w'_\jb$, thus finally 
$w'_\jb=w_{\euw_\jb}.$ Conclude that $w_{\euw_\ia}=w_{\euw_\jb}|_{\FF_\ia}.$ 
\end{proof}
%
%
\begin{remark}
Recall that $\tl\LL=\cup_\ia\NN_\ia\hra\cup_\ia\FF_\ia=\tl\EE$, and that
by Key Lemma~\ref{key-lemma}, one has $w_{\euw_\ia}=w_{\euw_\jb}|_{\FF_\ia}$ 
for~$\ia\leqslant\jb.$ Therefore, there is a unique valuation $\tlwv$ of $\tl\EE$
with $\tlwv|_{\FF_\ia}=w_{\euw_\ia}$ for all $\ia$, and moreover, 
since $w_{\euw_\ia}|_{\NN_\ia}=\euw_\ia=\tl\euv|_{\NN_\ia}$, etc., 
the following hold:
\vskip2pt
\itm{25}{
\item[a)] $\tl\euv=\tlwv|_{\tl\LL}$, and $\tlwv|_\EE=w_{\euw_\ia}|_\EE=v_\euv$, 
$\tlwv|_\LL=w_{\euw_\ia}|_\LL=\euw_\ia|_\LL=\euv.$ 
\vskip2pt
\item[b)] By Fact~\ref{fkt-can-val},~2) one has: \ 
$s^a_\ia(I_{\euw_\ia})=I^a_\ia\subset I_{D_{w_{\euw_\ia}}}\,$
and $\,s_\ia(D_{\euw_\ia})\subset D_{w_{\euw_\ia}}\hb1$. Hence since
\vskip2pt
$G(\LL^a|\LL)=\plim\ia G(\NN_\ia^a|\LL)$ and 
$G(\EE^a|\EE)=\plim\ia G(\FF_\ia^a|\EE)$ canonically, one has
\vskip3pt
\centerline{
$s^a(I_{\tl\euv})=\plim\ia I^a_\ia=\plim\ia T^a_{\euw_\ia}$ \ and
\  $s^a(D_{\tl\euv})=\plim\ia D^a_\ia=\plim\ia Z^a_{\euw_\ia}.$}
}
\end{remark}
\noindent
From the above discussion, we conclude with the following:
\begin{fact}[{\bf Fact~\ref{fkt-can-val} revisited}]
\label{fkt-finally}
%
%
{\it The $s^\star\!$-canonical valuations $\tlwv$ arising 
from $\euv\in\Val^1(\LL)$ in the way explained above have
$\chr(\tl\EE\tlwv)=\chr(\tl\FF\tlwv)\neq\ell$ and further satisfy the following: 
\vskip2pt
\itm{25}{
\item[{\rm1)}] $\tlwv|_{\FF_\ia}=w_{\euw_\ia}\hb1$, $\tlwv|_\EE=v_\euv\hb1$,
$\tlwv|_{\NN_\ia}=\euw_\ia\hb1$, $\tlwv|_\LL=\euv.$ \ Hence
$I_{\tlwv}=T^a_{\tlwv}$ and $D_{\tlwv}=Z^a_{\tlwv}$. 
\vskip2pt
\item[{\rm2)}] $s^a(T^a_{\tl\euv})\subset T^a_{\tlwv}\!$, \
$s^a(Z^a_{\tlv})\subset Z^a_{\tlwv}$ \ and \ $s^a(Z_{\tl\euv^a|\euv})
\subset Z_{\tlwv^a|v_\euv})$ \ and further,
\vskip2pt
\item[] $\tl p^a(T^a_{\tlwv})=T^a_{\tl\euv}$, \ $\tl p^a(Z^a_{\tlwv})=Z^a_{\tlv}\!$, 
\ and \ $\tl p^a(Z_{\tlwv^a|v_\euv})=Z_{\tl\euv^a|\euv}.$ \ Thus 
$\tl\imath(Z_{\tlwv|v_\euv})=Z_{\tl\euv|\euv}.$ 
}
}
\end{fact}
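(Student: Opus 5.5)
The plan is to pass to the limit over the filtered family $(\FF_\ia)_\ia$ in all the finite-level statements already proved, namely Fact~\ref{fkt-can-val}, Proposition~\ref{fktTopaz1} and the Remark following Key Lemma~\ref{key-lemma}.

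The first step fixes the valuation $\tlwv$. By Key Lemma~\ref{key-lemma} the family $(w_{\euw_\ia})_\ia$ is compatible under restriction, and $\tl\EE=\cup_\ia\FF_\ia$. Hence $\tlwv$ is well defined, with valuation ring $\cup_\ia\clO_{w_{\euw_\ia}}$, and $\tlwv|_{\FF_\ia}=w_{\euw_\ia}$. The remaining restrictions in 1) follow from the definition of the canonical valuation, which gives $w_{\euw_\ia}|_{\NN_\ia}=\euw_\ia$, and from transitivity of restriction:
\[
\tlwv|_{\NN_\ia}=\euw_\ia,\qquad \tlwv|_\LL=\euv,\qquad \tlwv|_\EE=v_\euv .
\]
For the residue characteristic, $\tl\LL\tl\euv\subset\tl\EE\tlwv$ has $\chr(\tl\LL\tl\euv)=\chr(\LL\euv)\neq\ell$ by condition (ii) of $\Val^1(\LL)$. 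Consequently, Fact~\ref{fkt4},~2), applied to $\tl\EE$ (note $\mu_\ell\subset\LL_1\subset\tl\EE$), yields $I_{\tlwv}=T^a_{\tlwv}$ and $D_{\tlwv}=Z^a_{\tlwv}$. This gives 1).

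For 2), I would use that $\GGa{\tl\EE}=\plim\ia\GGa{\FF_\ia}$, and likewise for $G(\tl\EE^a|\EE)$, and that inertia and decomposition groups are compatible with these limits. Concretely, $T^a_{\tlwv}$, $Z^a_{\tlwv}$ and $Z_{\tlwv^a|v_\euv}$ are the inverse limits of $T^a_{w_{\euw_\ia}}$, $Z^a_{w_{\euw_\ia}}$ and $Z_{w^a_{\euw_\ia}|v_\euv}$, since a Galois element fixes $\tlwv^a$ iff it fixes every restriction. At each finite level, Fact~\ref{fkt-can-val},~1) gives
\[
s^a_\ia(T^a_{\euw_\ia})\subset T^a_{w_{\euw_\ia}},\qquad s^a_\ia(Z^a_{\euw_\ia})\subset Z^a_{w_{\euw_\ia}},
\]
and the analogous inclusion for the full decomposition groups. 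The maps $s^a_\ia$ are the levels of $s^a$, so passing to limits gives the inclusions for $s^a$, exactly as in part b) of the preceding Remark. For the equalities under $\tl p^a$, Fact~\ref{fkt-can-val},~2) gives $p^a_\ia(T^a_{w_{\euw_\ia}})=T^a_{\euw_\ia}$ and its analogues at each level. The images of the limit groups under $\tl p^a$ surject onto each level, since projective limits of compact groups under surjective systems are preserved. Closedness of the images then gives equality with $T^a_{\tl\euv}=\plim\ia T^a_{\euw_\ia}$, and likewise for the decomposition groups. Finally, $\tl\imath(Z_{\tlwv|v_\euv})=Z_{\tl\euv|\euv}$ follows by projecting $\tl p^a(Z_{\tlwv^a|v_\euv})=Z_{\tl\euv^a|\euv}$ to $G(\tl\EE|\EE)$ and $G(\tl\LL|\LL)$, using the commutative cube.

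The main obstacle is justifying the limit identities for decomposition and inertia groups. One point needing care is that the prolongations $\tlwv^a$ to $\tl\EE^a$ must be chosen compatibly with the levels $w^a_{\euw_\ia}$. I would handle this by a compactness argument: the sets of compatible prolongations form an inverse system of nonempty profinite spaces, so a compatible choice exists. The other point is surjectivity in the limit, for which I would use the standard fact that an inverse limit of nonempty compact sets under surjective maps is nonempty.
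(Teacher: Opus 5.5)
Your proposal is correct and is essentially the paper's argument. The paper only writes ``Beweis klar!'', relying on Key Lemma~\ref{key-lemma} and the Remark after it to glue the $w_{\euw_\ia}$ into $\tlwv$ and to carry the finite-level statements of Fact~\ref{fkt-can-val} over to the inverse limit, exactly as you do; your treatment of compatible prolongations to $\tl\EE^a$ spells out what the paper leaves implicit.

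Two small remarks. First, $s^a$ need not come from sections at finite level, so your ``$s^a_\ia$'' should be read as $q^a_{\FF_\ia}\circ s^a$, as in Key Lemma~\ref{keylemma1}. Second, the equalities under $\tl p^a$ need no limit argument at all: $\supseteq$ follows from your inclusions under $s^a$ together with $\tl p^a\circ s^a=\mathrm{id}$, and $\subseteq$ is functoriality of inertia and decomposition groups.
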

\begin{proof} Beweis klar!
\end{proof}
\section{Proof of (\ha1the\ha1$\,\tl k|k$-quasi-minimalistic 
refinement of) Theorem~\ref{MThm1}} 
%
%
%
\subsection{\bf General setting for the $\,\tl k|k$-quasi-minimalistic refinement
(Thm~\ref{MThm} below)}
$\ha0$
\vskip2pt
In the sequel, $\ell>2$ is an odd prime number, $k$ is a field with $\chr(k)\neq\ell$,
$\tl k|k$ is a Galois extension 
of fields e.g., $\tl k=\oli k$, 
and $\star=a$,$c\ha1.$ 
For a geometrically irreducible $k$-scheme $T\!$, e.g., $T=k$, and 
the base change $\tl T:=T\times_k\tl k$, 
recall the canonical exact sequence: 
\[
1\to\pi_1(\tl T)\to \pi_1(T)\to G(\tl k|k)\to1.
\]
Thus setting $\tl\Pi^\star(k)\!:=G(\tl k^\star|k)$ and 
$\tl\Pi^c(T)\!:=\pi_1(T)/\pi_1^{(2)}(\tl T)$ and
$\,\tl\Pi^a(T)=:\!\pi_1(T)/\pi_1^{(1)}(\tl T)$, one has 
$\tl\Pi^\star(T)\srjr\tl\Pi^\star(k)$ canonically, and 
$\tl\pi_1^\star(T)\!:=\ker\big(\tl\Pi^\star(T)\to\tl\Pi^\star(k)\big)
\triangleleft\, \tl\Pi^\star(T)$ is the ``geometric part'' of $\tl\Pi^\star(T).$
Notice: If $\tl k=\oli k$, then $\pi_1(\tl k)=1$ 
and $\pi_1(\tl T)=\oli\pi_1(T)$ is the geometric part of $\pi_1(T)$
in the usual sense, etc. Finally, we get a canonical 
commutative diagram:
\[
\begin{matrix}
1\to&\oli \pi_1(T)&\hor{}&\pi_1(T)&\hor{p_T}&\pi_1(k)&\to1\cr
&\hb3\ddwn{}&&\ddwn{}&&\ddwn&\cr
1\to&\tl\pi^c_1(T)&\hor{}&\tl\Pi^c(T)&\hor{\tl p^c_T}&\tl\Pi^c(k)&\to1\cr
&\Ddwn{\tl pr_{\tl T}}&&\Ddwn{\tl pr_T}&&\Ddwn{\tl pr_k}&\cr
1\to&\tl\pi^a_1(T)&\hor{}&\tl\Pi^a(T)&\hor{\tl p^a_T}&\tl\Pi^a(k)&\to1\cr
\end{matrix}
\]

Notice that the above commutative diagram is canonically a 
quotient of the corresponding diagram in which $\tl\pi_1^\star$, 
$\tl\Pi^\star$ are replaced by $\oli\pi_1^\star$, $\Pi^\star\!.$ 
Further, the process $T\leadsto\tl\Pi^\star(T)\to\tl\Pi^\star(k)$ 
is functorial in $T$ for geometrically integral 
separated $k$-schemes. Hence recalling the discussion and 
the commutative diagrams introduced right before 
Remarks~\ref{CND3} from the Introduction, one gets 
commutative diagrams, the middle diagram is a quotient of 
the first one, and the third one is a quotient of the middle one:
\vskip-12pt
\usetikzlibrary{decorations.pathmorphing}
\[
\mathsurround=0pt
\begin{tikzcd}[column sep=scriptsize] 
\pi_1(K_t) \arrow[r, two heads, "p_t" '] \arrow[d, two heads, "q_K"]
  &\pi_1(k_t) \arrow[d, two heads, "q_k"] \arrow[l, dashed, bend right=10, "s_t" '] 
    &\Pi^c(K_t) \arrow[r, two heads, "p^c_t" ']\arrow[d, two heads, "q_K^\star"] 
      &\Pi^c(k_t) \arrow[d, two heads, "q_k^\star"] 
                \arrow[l, dashed, bend right=10, "s^c_t" ']
    &\tl\Pi^c(K_t) \arrow[r, two heads, "\tl p^c_t" ']\arrow[d, two heads, "\tl q_K^\star"] 
      &\tl\Pi^c(k_t) \arrow[d, two heads, "\tl q_k^\star"] 
                \arrow[l, dashed, bend right=10, "\tl s^c_t"']
                          \\
\pi_1(K) \arrow[r, two heads, "p_K" '] \arrow[d, two heads, "\qx_X"]
    &\pi_1(k) \arrow[l, dashed, bend right=10, "s_K" '] \arrow[d, "{\rm id}"] 
                       \arrow[r, squiggly ] 
       &\Pi^\star(K) \arrow[r, two heads, "p_K^\star" '] \arrow[d, two heads, "\qx^\star_X"]
         &\pi_1(k) \arrow[l, dashed, bend right=10, "s^\star_K"'] \arrow[d, "{\rm id}"]             
                       \arrow[r, squiggly ] 
       &\tl\Pi^\star(K) \arrow[r, two heads, "\tl p_K^\star" '] 
             \arrow[d, two heads, "\tl \qx^\star_X"]
         &\tl\Pi^\star(k) \arrow[l, dashed, bend right=10, 
                "\tl s^\star_K"'] \arrow[d, "{\rm id}"]             
                        \\
\pi_1(X) \arrow[r, two heads, "p_X" ']&\pi_1(k) 
        \arrow[l, dashed, bend right=10, "s_X" ']
 &\Pi^\star(X) \arrow[r, two heads, "p_X^\star" ']&\pi_1(k) 
                    \arrow[l, dashed, bend right=10, "s_X^\star"']                 
 &\tl\Pi^\star(X) \arrow[r, two heads, "\tl p_X^\star" ']&\tl\Pi^\star(k) 
                    \arrow[l, dashed, bend right=10, "\tl s_X^\star"']                 
\end{tikzcd}
\]
$\bullet$ In the sequel, for $S=k\vrg X\vrg T$, $T=X\vrg\,K\vrg\,K_t$ and 
$\star=a$,$\,c$ consider the canonical projections:
\vskip7pt
\centerline{$\tl\Pi^c(S)\,\horsrjr{\tl{pr}_S}\,\tl\Pi^a(S)\!$, \ 
$\pi_1(T)\,\horsrjr{\tl{pr}^\star_T}\,\tl\Pi^\star(T)\!$, \ 
and $\pi_1(K_t)\,\horsrjr{pr_t}\ha2\tl\Pi^c(K_t)\horsrjr{\tl q^\star_T}\ha4\tl\Pi^\star(T).$} 
%
%
%
\begin{remarks}[{\bf Remaks~\ref{CND3} revisited}]
One has (compare with assertions from Remarks~\ref{CND3}):
\vskip2pt
\itm{25}{
\item[1)] Every section $s_T\in\Spi Tk$ gives rise canonically to 
sections $\tl s^\star_T:\tl\Pi^\star(k)\to\tl\Pi^\star(T)$ of $\tl p^\star_T$ 
such that $s_T$ is a lift of $\tl s^\star_T$, i.e., $\tl s^\star_T=\tl{pr}^\star_T\circ s_T$, 
and $\tl s^c_T$ is a lift of $\tl s^a_T$, i.e., $\tl s^a_T=\tl{pr}_T\circ \tl s^c_T.$
\vskip2pt
\item[2)] Every $s_t\in\Spi {K_t}{k_t}$ gives rise canonically 
to a section $\tl s^\star_t:\tl\Pi^\star(k_t)\to\tl\Pi^\star(K_t)$ of 
$\tl p^\star_t\!$  such that $s_t$ is a lift of $\tl s_t^\star$, i.e., 
$\tl s_t^\star=\tl{pr}_{K_t}^\star\circ s_t$, and $\tl s^c_t$ is a 
lifting of $\,\tl s^a_t$, i.e., $\tl s^a_t=\tl{pr}_{K_t}\circ \tl s^c_t.$ 
\vskip2pt
\item[3)] For $s_T\in\Spi Tk\!$, let $s_t\in\Spi{K_t}{k_t}$ be a 
lift of $s_T$, i.e., $q_k\circ s_T=q_T\circ s_t$, and $\,\tl s_T^\star$ 
be as defined at item~1). Then $\tl s_t^c\!:=pr_t\circ s_t$ 
is a section of $\,\tl p_t^c$ which lifts~$\tl s_T^\star.$ 
}
\end{remarks}
%
%
%
\begin{definition}[{\bf Definition~\ref{CND2} revisited}] 
We say that a section $\tl s^a_T:\tl\Pi^a(k)\to\tl\Pi^a(T)$ of 
$\tl p^a_T:\tl\Pi^a(T)\to\tl\Pi^a(k)$ is {\it $\tl k|k$-$t\!$-a.b.c.\ 
birationally liftable,\/} if there is a section $\,\tl s^c_t$ of $\,\tl p^c_t$ 
lifting $\tl s^a_T$, i.e., $\tl s^a_T\circ \tl q^a_k=\tl q^a_T\circ \tl s_t^c.$ 
If so, we also say that $\tl s^c_t$ is a {\it $\tl k|k$-$t\!$-a.b.c.\ 
birational lift\/} of $\tl s^a_T.$
\end{definition}
%
%
\begin{remarks}[{\bf Remaks~\ref{CND4} revisited}]
We remark the following:
\vskip2pt
\itm{25}{
\item[1)] Let $\tl s^a_K$ be $\tl k|k$-$t\!$-a.b.c.\ birationally liftable 
and $\tl s^c_t$ be a $\tl k|k$-$t\!$-a.b.c.\ birational lift of $\tl s^a_K.$ 
Then $\tl s^c_t$ is a $\tl k|k$-$t\!$-a.b.c.\ birational lift of 
$\tl s^a_X\!:=\tl q^a_K\circ \tl s^a_K$, thus $\tl s^a_X$ is $\tl k|k$-$t\!$-a.b.c.\ 
birationally\ha2liftable.
\vskip2pt
\item[2)] For the converse, let $\tl s^c_t$ be a $\tl k|k$-$t\!$-a.b.c.\ 
birational lift of a given $\tl s^a_X.$ Then it is relatively easy to show that
$\tl s^a_X$ has birational lifts $\tl s^a_K.$ But (in the full generality 
we work in) is not clear whether $\tl s^a_X$ has some birational lift 
$\tl s^a_K$ which is $\tl k|k$-$t\!$-a.b.c.\ birationally liftable.
\vskip2pt
\item[$\bullet$] To compensate, we say that a section $\tl s^a_X$ of
$\tl p^a:\tl\Pi^a(X)\to\tl\Pi^a(k)$ is {\it strongly\/} $\tl k|k$-$t\!$-a.b.c.\ 
birationally liftable, if $\tl s^a_X$ lifts to some $\tl s^a_K$ which is itself 
$\tl k|k$-$t\!$-a.b.c.\ birationally liftable.
}
\end{remarks}
\noindent
{\bf Hypotheses.} For $\ell\neq\chr(k)$ odd and a Galois extension
$\tl k|k$, consider the hypotheses: 
\vskip2pt \
{\sf \ (H)} \ \ $\mu_\ell\subset \tl k$ and the degree $[\tlk^a\!:\!k]$ 
is infinite and divisible by $\ell.$
\vskip2pt \ 
{\sf (H0)} \ Setting $\,\tl k\!:=k(\mu_\ell)$, the field extension 
$\,\tl k|k\,$ satisfies hypothesis {\sf(H)}.
%
%
%
\begin{example} 
For an odd prime number $\ell\neq\chr(k)$, one has the following:
\vskip2pt
\itm{25}{
\item[1)] If $\,k\,$ is not $\ell$-closed, then the separable closure
$\,\oli k|k\,$ satisfies hypothesis~{\sf(H)}. 
\vskip2pt
\item[2)] 
Setting $\tl k\!:=k(\mu_\ell)$, the following are equivalent:
\vskip2pt
\centerline{ (i) {\it $\,\tl k\,|\,k$ satisfies hypothesis\/} (H0). \ \ \ \ 
(ii) {\it $\,\tl k^\times\hb3/\ell$ is infinite.\/}}
\vskip2pt
\item[] If so, we will simply say that {\it $k$ satisfies hypothesis\/} (H0).
\vskip4pt
\item[3)] The hypothesis~{\sf(H0)} is quite general, e.g., the infinite 
finitely generated fields, and more general, any Hilbertian field, 
etc., satisfy hypothesis~{\sf(H0)}. And if $\,k$ satisfies~{\sf(H0)}, 
one has:
\vskip5pt
\noindent
$\hb{10}(*)$ \ {\it $\oli k=\cup_\ia\, k_\ia\,$ inductively, where 
$\,k_\ia|\,k$ are finite Galois extensions with $k_\ia|k$ satisfying\/} {\sf(H0)}.
}
\end{example}

Recalling the notions of $\,\tl k|k$-a.b.c.\ liftable sections and 
$\,\tl k|k$-$t$-a.b.c.\ liftable sections, and the commutative diagram 
$(*)_{\tl k|k}$ above, consider/define the following:
%
%
%
\begin{definition} For $T=X\!$,$\,K$, let a section 
$\,\tl s^a_T:\tl\Pi^a(k)\to\tl\Pi^a(T)$ of the canonical projection 
$\tl p^a_T:\tl\Pi^a(T)\to\tl\Pi^a(k)$ be given. For closed points 
$x\in X\!$, let $\tl Z_{x}\subset \tl\Pi^a(T)$ be the decomposition groups 
above the $k$-valuation $v_{x}$ of~$K$~with $\clO_{v_{x}}=\clO_{x}.$  
We say that $x\in X$
\vskip2pt
\itm{25}{
\item[a)] {\it defines\/} $\,\tl s^a_T$ if $x\in X(k)$ 
and $\im(\tl s^a_T)\subset \tl Z_x$ for some $\tl Z_x$ 
 above $v_x$.  
\vskip2pt
\item[b)] {\it quasi-defines\/} $\,\tl s^a_T$ if 
$\,k_x\!:=\kappa(x)\cap\tl k=k$, and $\,\im(\tl s^a_T)\!\subset\! \tl Z_{x}$ 
for some  $\,\tl Z_{x}$ 
above $\,v_{x}$.
\vskip2pt
\item[c)] {\it approximates\/} $\,\tl s^a_T$ if 
$\,\im(\tl s^a_T)\cap \tl Z_{x}\subset\im(\tl s^a_T)$ is an open
subgroup for some $\,\tl Z_{x}$ 
above~$v_{x}$.
}
$\hb9\bullet$ Note that for $\tl k=\oli k$, the notions ``defines'' and 
``quasi-defines'' are identical. 
\end{definition}

\noindent
Therefore, Theorem~\ref{MThm1} form Introduction is a 
special case of the following:
%
%
%
\begin{theorem}[{\bf $\tl k|k$-\ha1quasi-minimalistic\ha2$t$-BSC}]
\label{MThm}
Let $\,\tl k|k$ satisfies hypothesis {\sf(H)}, and $X$ be a complete 
integral normal $k$-curve with function field $K=k(X).$ In the above 
notation, one has:
\vskip2pt
\itm{25}{
\item[{\rm1)}] Any $\tl k|k$-$t$-a.b.c.\ birationally liftable section 
$\tl s^a_K\!:\!\tl\Pi^a(k)\to\tl\Pi^a(K)$ of $\tl p^a_K\!:\!\tl\Pi^a(K)\to\tl\Pi^a(k)$ 
is \textit{\textbf{quasi-defined}} by a unique closed point $\,x\in X$
as described above. 
\vskip2pt
\item[{\rm2)}] In particular, the strongly $\tl k|k$-$t$-\ha1a.b.c.\ birationally liftable 
sections $\,\tl s^a_X:\tl\Pi^a(k)\to\tl\Pi^a(X)$ of $\tl p^a_X:\tl\Pi^a(X)\to\tl\Pi^a(k)$ 
are \textit{\textbf{quasi-defined}} by closed points $x\in X$ as described above.
\vskip2pt
\item[{\rm3)}] Any $\tl k|k$-$t$-a.b.c.\ birationally liftable section 
$\tl s^a_X\!:\!\tl\Pi^a(k)\to\tl\Pi^a(X)$ of $\tl p^a_X\!:\!\tl\Pi^a(X)\to\tl\Pi^a(k)$ 
is \textit{\textbf{approximated}} by a closed point $\,x\in X$ as defined above.
}
\end{theorem}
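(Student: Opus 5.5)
The plan is to apply the machinery of Section~3 with $\LL\!:=k_t=k(t)$, $\EE\!:=K_t=K(t)$, $\tl\LL\!:=\tl k(t)$ and $\tl\EE\!:=\tl K(t)=K_t\tl k$. Then $\EE|\LL$ is regular and $\tl\LL|\LL$ is Galois with group $G(\tl k|k)$. A $\tl k|k$-$t$-a.b.c.\ birational lift $\tl s^c_t$ of $\tl s^a_K$ is, after passing to the relevant $\lvZ/\ell$-metabelian quotients of $\pi_1(K_t)$ (which are quotients of $G(\tl\EE^c|\EE)$, resp.\ $G(\tl\LL^c|\LL)$, restricted to the unramified-in-$t$ parts), an a.b.c.-liftable section $s^a$ of $p^a$. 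The first step is to make this identification precise, i.e.\ to check that $\tl\Pi^\star(K_t)$ and $\tl\Pi^\star(k_t)$ are the Galois groups of the maximal $\lvZ/\ell$-abelian, resp.\ a.b.c., extensions of $\tl K(t)$, $\tl k(t)$ which are unramified over the appropriate models. I would work with a Galois field $\tl\LL$ large enough that condition (iv) of Notation/Remark~\ref{nota/rem3.1} holds for the valuations considered.

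Next I choose the valuations $\euv\in\Val^1(k_t)$. These are the discrete $k$-valuations $\euv_a$ of $k(t)$ attached to closed points $t=a$ of $\lvP^1_k$ with $a\in k$, so that $\euv k_t=\lvZ$ and the residue field is $k$. The point is that hypothesis (H) guarantees condition (iii): $[\tl k^a:k]$ divisible by $\ell$ and infinite gives a finite Galois $\LL_1=k_1(t)$ with $\mu_\ell\subset k_1$ and $k_1^\times/\ell\neq1$. Condition (ii) holds since $\chr(k)\neq\ell$, and (iv) holds because $\tl k(t)|k(t)$ is constant. Applying Fact~\ref{fkt-finally} to each such $\euv$ then produces an $s^\star$-canonical valuation $\tlwv$ of $\tl K(t)$ and $v_\euv=\tlwv|_{K_t}$, with $\tlwv|_{k_t}=\euv$ and $\chr\neq\ell$. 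Moreover $\tl\imath(Z_{\tlwv|v_\euv})=Z_{\tl\euv|\euv}$, so the decomposition field of $v_\euv$ is "as large as" the base, and $s^a(Z_{\tl\euv^a|\euv})\subset Z_{\tlwv^a|v_\euv}$.

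The key geometric step is to show that $v_\euv$ restricted to $K$ (i.e.\ on the constant field $K\subset K(t)$) is a nontrivial $k$-valuation $v_x$ for a closed point $x\in X$. It is trivial on $k$, since on $k\subset k(t)$ it restricts from $\euv_a$, which is trivial on $k$. I expect the main obstacle to be excluding the case where $v_\euv|_K$ is trivial, i.e.\ where $v_\euv$ is just the Gauss-type extension of $\euv_a$ to $K(t)$ with residue field $K$. To rule this out, I would vary $a$ and use that then $Z_{\tlwv^a|v}$ would contain the full a.b.c.\ geometric part coming from $K$. Combined with the section being a section over the residue field, this would force $\tl\Pi^a(K)\to\tl\Pi^a(k)$ to split with image meeting the geometric part $\tl\pi^a_1(K)$ nontrivially after reduction, contradicting commuting liftability via Theorem~\ref{Topaz} (the residue field $\tl Kk_1$ has non-cyclic $\ell$-square classes). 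Once $v_\euv$ is nontrivial on $K$, the residue map $\clO_{v_\euv}\to K(t)v_\euv$ specializes $t\mapsto a$. Reducing $s^a$ modulo inertia and using the commutative diagram with $\tl q^a_K$, I get $\im(\tl s^a_K)\subset\tl Z_x$. Since $\tl\imath(Z_{\tlwv|v})=Z_{\tl\euv|\euv}$ is the full $G(\tl k|k)$, the residue field satisfies $\kappa(x)\cap\tl k=k$, which gives quasi-definition.

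Uniqueness follows from Notations/Remarks~\ref{CND}~b) and Lemma~\ref{indepval}. Two distinct closed points give independent discrete valuations, hence decomposition groups in $\GGa{\tl K}$ with trivial intersection. But $\im(\tl s^a_K)\cap\tl\pi^a_1$-images is nontrivial because (H) makes $\tl\Pi^a(k)$ contain elements of the residue-field Kummer part, so the two decomposition groups would share a nontrivial subgroup, which is impossible. Assertion~2) is then immediate from 1) by projecting along $\tl\qx^a_X$. For 3), only $\tl s^a_X$ is available, so I would run the same argument with the lift to $\tl\Pi^c(K_t)$ restricted over finite étale covers. This yields a point $x$ with $\im(\tl s^a_X)\cap\tl Z_x$ of finite index, but without control of the residue extension, so only "approximates" is obtained.
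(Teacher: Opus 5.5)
Your setup ($\LL=k_t$, $\EE=K_t$, $\tl\LL=\tl k(t)$, and Fact~\ref{fkt-finally} applied to each $\euv$) matches the paper. The genuine gap is the step you flag as the main obstacle: showing that $\tlwv$ is nontrivial on $K$. Your sketch (vary $a$, the decomposition group would contain the geometric part of $K$, contradiction with commuting liftability via Theorem~\ref{Topaz}) produces no contradiction.

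Suppose $\tlwv$ were the $\tlK$-trivial valuation $v_{\tlK,t-a}$ of $\tlK(t)$. Then $\tlwv\in\clW_{\tlK_t}$, and its minimized inertia/decomposition groups form a perfectly good c.l.\ pair. Its decomposition group modulo inertia is $G(\tlK^a|K)=\tl\Pi^a(K)$, and by the lifting condition the map induced from $Z_{\tl\euv^a|\euv}$ to this quotient is just $\tl s^a_K\circ\tl q^a_k$. So at any single $\euv$ this configuration is consistent with all the local group theory. Theorem~\ref{Topaz} recovers $\tlwv$ from its c.l.\ pair, but says nothing about $\tlwv|_{\tlK}$.

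The missing idea is a global relation on $\lvP^1$, which is Lemma~\ref{non-trivial},~1). By Fact~\ref{fact3}, $G'$ is generated by inertia generators $\tau_{\tl\euv}$, $\tl\euv\in\Sigma'$, subject only to $\prod_{\Sigma'}\tau_{\tl\euv}=1$. Here $\Sigma'$ consists of $v_\infty$ and the closed points of $\lvP^1_{\tlk}$ of degree prime to $\ell$. The same holds for $G'_K$ over $\tlK$ with $\Sigma'_K$. If every $\tlwv$ with $\tl\euv\in\Sigma'$ were trivial on $\tlK$, these would be the valuations $v_{\tlK,p}$ and $v_{\tlK,\infty}$. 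Then $\tl s^a_t$ would carry the relation to one among inertia generators indexed by a proper subset of $\Sigma'_K$, missing for example $v_{\tlK,t-f}$ with $f\in\tlK\setminus\tlk$. That is impossible.

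This argument only yields \emph{some} $\tl\euv\in\Sigma'$ with $\tlwv|_K$ nontrivial, and nothing forces it to lie over a $k$-rational point $t=a$. So you cannot fix a rational $a$ and read off $\im(\tl s^a_K)\subset\tl Z_x$ and $\kappa(x)\cap\tlk=k$ from $Z_{\tl\euv|\euv}$ being all of $G(\tl k_t|k_t)$. In general you only get that the open subgroup $\tl s^a_K\big(G(\tlk^a|k_{\tl\euv})\big)$ lies in a decomposition group of $x$. The paper then needs Key Lemma~\ref{key-lemma-1}: $w_K$ does not depend on $\tl\euv$, and conjugating by $\tl s^a_K(\sigma^a)$ shows every $\sigma\in G(\tlk|k)$ fixes $\tl w_{\tlK}$, whence $Kw_K\cap\tlk=k$.

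Your uniqueness argument is also wrong as stated. The image of a section meets its kernel $\tl\pi^a_1(K)$ trivially. Moreover $\im(\tl s^a_K)\cap\GGa{\tlK}=\tl s^a_K(\GGa{\tlk})$, which is trivial for $\tlk=\oli k$, the main case. So you cannot put a nontrivial piece of $\im(\tl s^a_K)$ inside $\GGa{\tlK}$ and invoke Lemma~\ref{indepval} there. The paper argues instead (Claim~1 of Key Lemma~\ref{key-lemma-1}) with an open subgroup of $\im(\tl s^a_K)$ contained in both decomposition groups.

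Finally, $\tl\Pi^\star(K_t)=G(\tlK^\star_t|K_t)$ carries no unramifiedness condition. Imposing one would kill exactly the inertia groups $T^a_{\tl\euv}$ on which the whole argument rests.
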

\subsection{\bf Preparation for the proof of Theorem~\ref{MThm}}$\ha0$
\vskip2pt
Let $\tl k|k$ satisfying (H),  $X$ be a geometrically 
integral proper $k$-curve, and $K=k(X).$ This leads to a special case of 
the general situation from Section~3 as follows. Let $\LL\!:=k(t)=:\!k_t$ 
the rational function field in the variable $t$ over $k$ and 
$\EE\!:=K(t)=:\!K_t$ be the compositum of $K=k(X)$ and $L=k(t)$ 
over $k.$ Then $\EE|\LL$, that is, $K_t|k_t$ is a regular field 
extension (because $K|k$ was so). 
Thus setting $\tl k_t\!:=\tl k(t)$ and $\tl K\!:=K\tl k$, $\tl K_t\!:=\tl K(t)$,
one has $\tl\EE=\EE\tl\LL=\tl K_t$, etc. 
For the embeddings of Galois extensions 
$\tl K_t|K_t\hla\tl k_t|k_t\hla \tl k|k$, 
$\tl K_t^a|K_t\hla\tl k_t^a|k_t\hla \tl k|k$, let
\vskip5pt
\centerline{$G(\tl K_t|K_t)\hor{\tl\imath} G(\tl k_t|k_t)\hor{} G(\tl k|k)$, 
\quad
$\tl\Pi^\star(K_t)\horsrjr{p^\star_t}\ha3 \tl\Pi^\star(k_t)
\horsrjr{q^\star_k}\ha3 \tl\Pi^\star(k)$}
\vskip5pt
\noindent
be the resulting canonical isomorphisms of Galois groups, 
respectively the surjective morphisms of Galois groups, 
where~$\star=a$,$\,c.$
Further, since $\tl k|k$ satisfies hypothesis (H) above, there is a 
finite Galois subextension $k_1|k\hra\tl k|k$ with $\mu_\ell\subset k_1$ 
and $k_1^\times\hb3/\ell\neq1.$ Hence since $\ell>2$, all finite
sub-extensions $k'|k\hra \tl k|k$ with $k_1\subset k'$ satisfy 
$\mu_\ell\subset k'$ and $k'^\times\hb3/\ell\neq1$ 
as well.
\vskip2pt
To proceed, we consider the family of finite Galois subextensions 
$k_\ii|k$ of $\tl k|k$ with $k_1\subset k_\ii$, partially ordered by: 
$\ii\leqslant \jj$ iff $k_\ii\subset k_\jj$. Then we are in the context 
if Section~3, by setting $\NN_\ii=k_{\ii,t}\!:=k_\ii(t)\subset\tl k_t=\tl\LL$ and 
$\FF_\ii=\EE\NN_\ii=k_{\ii,t}\subset\tl k_t$, getting isomorphic 
projective systems of finite groups $G(k_\jj|k)\srjr G(k_\ii|k)$, 
$G(\NN_\jj|\LL)\srjr G(\NN_\ii|\LL)$, $G(\FF_\jj|\EE)\srjr G(\FF_\ii|\EE)$ 
for $\ii\leqslant\jj$, having the canonical isomorphisms 
$G(\tl\EE|\EE)\hor{\tl\imath} G(\tl\LL|\LL)\hor{} G(\tl k|k)$ as limit.
\vskip3pt
$\bullet$ {\it Concerning valuations\/}: Recall that all $k$-valuations $\vy\in\Val_k(k_t)$ 
are discrete, being either the $p(t)$-adic valuations $\vy=v_{k,p}$ with 
$p=p(t)\in k[t]$ the monic irreducible polynomials, or $\vy=v_\infty$ 
with uniformizing parameter $\pi_\infty={1\over t}.$ For $\vy\in\Val_k(k_t)$ 
consider the prolongations $\vz_\ii^a|\vz_\ii|\vy$ of~$\vy$ to 
$k_{\ii,t}^a |k_{\ii,t}|k_t$ with limit $\tl\vy^a|\tl\vy|\vy$ as 
prolongations of $\vy$ to $\tl k_t^a | \tl k_t| k_t.$ We notice that
$\tl\vy|\vz_\ii|\vy$ of $\tl k_t|\NN_\ii|k_t$ are unramified and 
$k_{\ii,t}\vz_\ii^\times\hb3/\ell\neq1$ (because $k_{\ii,t}\vz_\ii|k_1$ 
is finite). Therefore, we conclude that $\Val_k(k_t)\subset\Val^1(k_t).$
Similarly, for
$\FF_\ii\!:=K_{\ii,t}\!:=K k_{\ii,t}$ and $v\in\Val_k(K_t)$, consider its 
prolongations $w_\ii^a|w_\ii|v$ to $K_{\ii,t}^a | K_{\ii,t}|K_t$ and 
$\tl v^a|\tl v|v$ prolonging $v$ to $\tl K_t^a |\tl K_t|K_t$. 
\vskip3pt
Conclude that $\tl\LL|\LL=\tl k|k_t$, $\tl\EE|\EE=\tl K_t|K_t$ are as
the ones introduced/defined above in Section~3.
\vskip5pt
Next, let $\tl s^a_t:\tl\Pi^a(k_t)\to \tl\Pi^a(K_t)$ be an a.b.c.-liftable
section of the canonical (surjective) projection 
$\tl p^a_t:\tl\Pi^a(K_t)\srjr \tl\Pi^a(k_t)$, i.e., $\tl s^a_t$ lifts to a 
section $\tl s^c_t:\tl\Pi^c(k_t)\to \tl\Pi^c(K_t)$ of the canonical 
(surjective) projection $\tl p^c_t:\tl\Pi^c(K_t)\to \tl\Pi^c(k_t).$ 
Then recalling the canonical isomorphism 
\vskip5pt
\centerline{$\tl\imath:G(\tl K_t|K_t)\to G(\tl k_t|k_t)$ \
                                    defined by \ $\tl K_t|K_t\hla\tl k_t|k_t$,} 
\vskip5pt
\noindent
one has the following:
\vskip-20pt
%
%
\begin{fact}[{\bf Fact~\ref{fkt-finally} revisited}]
\label{fkt-finally-bis}
{\it In the above context, for $\euv\in\Val_k(k_t)$ and its prolongation 
$\tl\euv|\euv$ to $\tl k_t|k_t$, consider the corresponding 
inertia/decomposition groups $T^a_{\tl\euv}\!\subset 
     Z^a_{\tl\euv}\!\subset Z_{\tl\euv^a|\euv}\!\subset \tl\Pi^a(k_t).$ 
Then there is a unique minimal valuation $\tlwv\in\Val(\tl K_t)$ 
such that the following hold:
\vskip4pt
\itm{25}{
\item[{\rm1)}] $\tlwv|_{\tl k_t}=\tl\euv$, thus $\tlwv$ 
is trivial on $k$, i.e., $\tlwv\in\Val_k(\tl K_t)$ and 
$v_\euv\!:=\tlwv|_{K_t}\in\Val_k(K_t).$
\vskip4pt
\item[{\rm2)}] $\tl s^a_t(T^a_{\tl\euv})\subset T^a_{\tlwv}$, 
$\tl s^a_t(Z^a_{\tl\euv})\subset Z^a_{\tlwv}$, \ and 
$\tl s^a_t(Z_{\tl\euv^a|\euv})\subset Z_{\tlwv^a|v_\euv}$, \ and further,
\vskip3pt
\item[] $\tl p^a_t(T^a_{\tlwv})=T^a_{\tl\euv}$, \ $\tl p^a_t(Z^a_{\tlwv})=Z^a_{\tl\euv}$, 
\ and \ $\tl p^a_t(Z_{\tlwv^a|v_\euv})=Z_{\tl\euv^a|\euv}.$ Thus \
$\imath(Z_{\tlwv|v_\euv})=Z_{\tl\euv|\euv}.$
}
\noindent
In particular, every a.b.c\ liftable section $\tl s_t^a:\tl\Pi^a(k_t)\to \tl\Pi^a(K_t)$ 
of the canonical (surjective) projection $\tl p_t^a:\tl\Pi^a(K_t)\srjr \tl\Pi^a(k_t)$
gives rise to an injective map 
\[
\varphi:\Val_k(\tl k_t)\to\Val_k(\tl K_t)\vrg \quad \tl\euv\mapsto\tlw_{\tl\euv}\vrg
\]
such that the $k$-valuations $\tl\euv$ and $\tlw_{\tl\euv}$ satisfy the
conditions {\rm1), 2)} above.
}
\end{fact}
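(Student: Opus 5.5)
The plan is to specialize the general machinery of Section~3 to $\LL=k_t$, $\EE=K_t$, $\tl\LL=\tl k_t$, $\tl\EE=\tl K_t$, with the finite Galois subextensions $\NN_\ia=k_{\ia,t}$, $\FF_\ia=K_{\ia,t}$ indexed by the finite Galois subextensions $k_\ia|k$ of $\tl k|k$ containing $k_1$. We already checked that $\Val_k(k_t)\subset\Val^1(k_t)$. Hence for every $\euv\in\Val_k(k_t)$ and prolongation $\tl\euv|\euv$, Fact~\ref{fkt-finally} applies to the a.b.c.-liftable section $\tl s^a_t$ and produces the $s^\star$-canonical valuation $\tlwv$ of $\tl K_t$. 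For each $\ia$ this valuation restricts to $w_{\euw_\ia}$ and to $\tl\euv$ on $\tl k_t$. The inclusions and surjectivities in 2) are then exactly Fact~\ref{fkt-finally},~2), read through the identifications $\tl\Pi^a(k_t)=G(\tl k_t^a|k_t)$ and $\tl\Pi^a(K_t)=G(\tl K_t^a|K_t)$. So the substance of the proof is checking 1), together with minimality and uniqueness.

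For 1): since $\tlwv|_{\tl k_t}=\tl\euv$ and $\tl\euv$ is a $k$-valuation, it is trivial on $\tl k$. Indeed, $\tl k|k$ is algebraic, so a valuation trivial on $k$ is trivial on $\tl k$. Therefore $\tlwv\in\Val_k(\tl K_t)$, and its restriction $v_\euv$ to $K_t$ lies in $\Val_k(K_t)$.

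For minimality and uniqueness, I would use the characterization of the canonical $\euw_\ia$-valuation $w_{\euw_\ia}$: it is the \emph{minimal} valuation $w'\leqslant w_\ia$ with $w'|_{\NN_\ia}=\euw_\ia$. Suppose $\tl w'$ is another valuation satisfying 1) and 2). Condition 2) gives $\tl s^a_t(T^a_{\tl\euv})\subset T^a_{\tl w'}\cap T^a_{\tlwv}$, and this group is nontrivial because $\tl p^a_t$ maps it onto $T^a_{\tl\euv}\cong\lvZ/\ell$. Passing to finite levels, the argument of Key Lemma~\ref{key-lemma}, Steps~1--2 (Proposition~\ref{fkt01},~1) applied to the nontrivial intersection of inertia groups), shows that $\min(\tl w',\tlwv)$ is nontrivial and still restricts to $\tl\euv$, because $\tl\euv$ is discrete of rank one. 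Minimality of each $w_{\euw_\ia}$ then forces $\min(\tl w',\tlwv)=\tlwv$, i.e.\ $\tlwv\leqslant\tl w'$. This proves that $\tlwv$ is the unique minimal valuation with these properties.

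Finally, for injectivity of $\varphi$: if $\tlw_{\tl\euv_1}=\tlw_{\tl\euv_2}$, restricting to $\tl k_t$ gives $\tl\euv_1=\tl\euv_2$ by 1). The main obstacle is the minimality step. One has to make sure that the finite-level canonical valuations $w_{\euw_\ia}$ are defined independently of the auxiliary valuation $w_\ia$ coming from Construction~\ref{construction}, and that a competitor $\tl w'$ satisfying 2) actually dominates the relevant $w_\ia$. I would handle this by comparing inertia groups through Theorem~\ref{Topaz} and the uniqueness in Remark~\ref{unique} at each finite level, and then pass to the limit using Key Lemma~\ref{key-lemma}.
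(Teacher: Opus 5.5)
Your proposal is correct and follows the paper's route. The paper's proof is simply ``clear'', meaning exactly your specialization of Fact~\ref{fkt-finally} (with $\tlwv$ glued from the compatible canonical valuations of Key Lemma~\ref{key-lemma}) to $\LL=k_t$, $\EE=K_t$, using $\Val_k(k_t)\subset\Val^1(k_t)$.

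Your least-element argument for the ``unique minimal'' clause, via $\min(\tl w',\tlwv)$ and the minimality of each $w_{\euw_\ia}$, fills in something the paper leaves implicit. Two small corrections to it:
\begin{itemize}
\item The step showing that $\min(\tl w',\tlwv)$ restricts to $\tl\euv$ should not rely on Proposition~\ref{fkt01}. Its hypotheses (and its conclusion that the two valuations are incomparable) do not fit an arbitrary competitor $\tl w'$. Use instead $I_{w_1}\cap I_{w_2}=I_{\min(w_1,w_2)}$, which follows from Kummer duality and $\clO^\times_{w_1}\clO^\times_{w_2}=\clO^\times_{\min(w_1,w_2)}$ (cf.\ Fact~\ref{fkt0}).
\item The worry in your last paragraph does not arise. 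Since $\min(\tl w',\tlwv)\leqslant\tlwv$, its restriction to each $\FF_\ia$ already lies below $w_{\euw_\ia}\leqslant w_\ia$, so no domination of $w_\ia$ by $\tl w'$ is needed.
\end{itemize}
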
 
\begin{proof} 
Beweis, klar!
\end{proof}
\subsection{\bf Places via {\it $\,\tl k |k$-$t\hb1$-\ha2}a.b.c.\ha3liftable sections}
$\ha0$
\vskip2pt
\noindent
If not otherwise explicitly stated, through out this subsection, 
the notation is that from Theorem~\ref{MThm}, that is: $T=X$,$\,K$
and $\tl s_T^a:\tl\Pi^a_1(k)\to \tl\Pi_1^a(T)$ is a $\,\tl k|k$-$t$-a.b.c.\ 
liftable section of the canonical projection 
$\tl p^a_T:\tl\Pi_1^a(T)\to \tl\Pi_1^a(k)$, and further, for $\star=a$,$\,c$,
we denote by $\tl s^\star_t:\tl\Pi_1^\star(k_t)\to \tl\Pi_1^\star(K_t)$ 
the $\tl k|k$-$t$-a.b.c.\ liftings of $\tl s_T^a$ to sections of 
$\tl p^\star_{K_t}:\tl\Pi^\star(K_t)\to \tl\Pi_1^\star(k_t)$ such that
$\tl s^c_t$ is a lifting of $\tl s^a_t.$  In particular, recalling the notations and 
the commutative diagrams introduced before Theorem~\ref{MThm}, 
one has the following:

\[
\begin{matrix}
&&\tl\Pi_1^c(K_t)&\horsrjr{\tl p^c_{t}}&\tl\Pi_1^c(k_t)&\horsrjr{\tl q^c_{k}}&\tl\Pi^c(k)&
         \ha{20}  \tl\Pi_1^c(K_t)&\horsrjl{\tl s_t^c}&\tl\Pi_1^c(k_t) \cr
&\ha{20}&\Ddwn{\tl{pr}_{K_t}}&&\Ddwn{\tl{pr}_{k_t}}&&\Ddwn{\tl{pr}_k}&
                  \ha{20}       \Ddwn{pr_{K_t}}&&\Ddwn{pr_{k_t}} \cr
&&\tl\Pi_1^a(K_t)&\horsrjr{\tl p^a_{t}}&\tl\Pi_1^a(k_t)&\horsrjr{\tl q^a_{k}}&\tl\Pi^a(k)&
         \ha{20}  \tl\Pi_1^a(K_t)&\horsrjl{\tl s_t^a}&\tl\Pi_1^a(k_t) \cr
\end{matrix}
\]
%
%
%
\begin{notation/remark} 
Let $v_{\infty}$, $v_{K,\infty}$ be
the ${1\over t}$-\ha1adic valuations of $k_t\vrg\,K_t$, thus 
$v_{\infty}=v_{K,\infty}|_{k_t}.$ For $\euv\in\Val_k(k_t)$
denote $d_{\vy}\!:=[k_t\vy :k].$ Obviously, one has:
\vskip5pt
\centerline{$d_{v_\infty}=1$ and $d_{v_p}=[k_t v_p:k]=\deg(p)$ 
is the degree of $p\in k[t].$} 
\vskip5pt
\noindent
The same holds, correspondingly for 
$w\in\Val_k(K_t)$, $\tl\euv\in\Val_{\tl k}(\tl k_t)$, $\tl w\in\Val_{\tlK}(\tlK_t)$, etc. 
\vskip2pt
\noindent
Further, given $p\in \tlk[t]$ monic irreducible, 
and $\alpha_p\in \tlk^a_t$ with $\alpha^\ell_p=p$, 
the following hold:
\vskip2pt 
\itm{25}{
\item[a)] $v_p$ is ramified in $\tlk_t(\alpha_p)\,\big|\,\tlk_t$, and $\tl\vy\in\Val_k(\tlk_t)$, 
$\tl\vy\neq v_p,v_\infty$, are unramified in $\tlk_t(\alpha_p)\,|\,\tlk_t$. 
\vskip2pt
\item[b)] $v_\infty$ is ramified in $\tlk_t(\alpha_p)\,\big|\,\tlk_t$ iff $\ell\!\nmid\!d_p$.
}
\vskip2pt
\noindent
We also notice that for $v\in\Val(\tlK_t)$ the following are equivalent:
\vskip2pt
\itm{25}{
\item[(i)] There is $\,p\in \tlk[t]$ monic irreducible such that 
$\,v=v_{\tlK,p}.$ 
\vskip2pt
\item[$({\rm ii})\hb4$] $v_\tlK\!:=v|_\tlK$ is trivial, $v|_{\tlk_t}$ is non-trivial, and 
$v\neq v_{\tlK,\infty}$.
}
\end{notation/remark}
\noindent
[{\it Proof of the last assertion.\/} For reader's sake we present the quite obvious proof. 
First, the direct implication is clear, because $\tlk=\oli k\cap \tlK\,$
implies: $p\in \tlk[t]$ is irreducible iff $p$ is irreducible over $\tlK$.
For the converse implication proceed as follows: Since $v|_K$ is trivial, 
$v$ is a $\tlK$-valuation of $\tlK_t=\tlK(t)$, and $v\neq v_{\tlK,\infty}$ 
implies that $v=v_{\tlK,q}$ is the $q$-adic $\tlK$-valuation for some 
monic irreducible polynomial $q\in \tlK[t]$. Since $v|_{\tlk_t}$ is nontrivial, 
there exists a unique monic irreducible $p\in \tlk[t]$ such that 
$v|_{\tlk_t}=v_{\tlk,p}$, and since $\tlk=\oli k\cap \tlK\,$ by hypothesis, 
one has that $p\in \tlK[t]$ is irreducible. On the other hand, since 
$v_{\tlk,p}=v_{\tlK,q}|_{\tlk_t}$, one must have $v_{\tlK,q}(p)=v_{\tlk,p}(p)>0$. 
Hence $q|p$ in $\tlK[t]$, thus $p=q$ (because both $p,q$ are irreducible 
monic).] 
\vskip7pt
Next, let $\Sigma\!:=\Val_k(\tlk_t)$, 
$\Sigma'\!:=\{v_\infty\}\cup\{v_p\in\Val_k(\tlk_t)\,\big|\,(\ell\vrg d_{v_p})=1\}$ 
and $\Sigma''\!:=\Sigma\backslash\Sigma'$, thus 
$\Sigma=\Sigma'\lower-2pt\hbox{$\scriptstyle\coprod$}\Sigma''.$ 
Define $\,\Sigma'_K,\Sigma''_K\subset\Val_K(\tlK_t)$ correspondingly, etc.
Notice that since $K|k$ is a regular field extension, every monic 
irreducible polynomial $p\in \tlk[t]$ is monic irreducible in $\tlK[t]$. 
Hence if $v_{\tlK,p}$ is the prolongation of $v_p\in\Val_k(\tlk_t)$ 
to $\tlK_t$, then one has:
\vskip5pt
\centerline{$\ha{20}(*)\ha{80}$ $d_{v_p}=[\tlk_t v_p:\tlk]=\deg(p)=[\tlK_t v_{\tlK,p}:\tlK]=d_{v_{\tlK,p}}$,$\ha{110}$}
\vskip5pt
\noindent
implying that $\Sigma'\subset\Sigma'_K$ and $\Sigma''\subset\Sigma''_K$.
Further, let $p\in \tlK[t]\backslash \tlk[t]$ be monic irreducible. Then $v_{\tlK,p}$ 
is trivial on $\tlk_t$, implying that $\Sigma'=\Val_k(\tlk_t)\cap\Sigma'_K$ and 
$\Sigma''=\Val_k(\tlk_t)\cap\Sigma''_K$.
\vskip5pt
\noindent

Recall the exact sequence 
$1\to \tlk_t^\times\hor{\bm\imath}\tlk^\times\oplus_{\tl\euv\in\Sigma}\tl\euv\ha1\tlk_t
\hor{\scalebox{.8}[.7]{deg}\,}\lvZ\to0$ with $\bm\imath(f)=a_f\oplus_{\tl\euv} \tl\euv(f)$,
where $a_f$ is the leading coefficient of $f$ and $\deg=\sum_{\tl\euv} d_{\tl\euv}.$ Hence
tensoring with $\lvZ/\ell$, on gets an exact the exact sequence 
$1\to \tlk_t^\times\hb3/\ell\to\tlk^\times\hb4/\ell\oplus_{\tl\euv\in\Sigma}\tl\euv\ha1\tlk_t/\ell
\to\lvZ/\ell\to0.$ Using the latter exact sequence, by Hilbert decomposition 
theory and Kummer theory one has the following well knowing fact (and therefore,
we do not repeat here the well known argument):
%
%
%
\begin{fact}
\label{fact3}
{\it In the above notation, the following hold:
\vskip3pt
\noindent \ \
{\rm (I)} In the above notation, setting $\,k^0_t\!:=\tlk^a\tlk_t$, 
$\,k'_t\!:=\tlk_t(\alpha_{v_p})_{v_p\in\Sigma'}$, 
$\,k''_t\!:=\tlk_t(\alpha_{v_p})_{v_p\in\Sigma''}$,
one has:
\vskip2pt
\itm{25}{
\item[{\rm1)}] The fields $k^0_t,\,  k'_t,\,  k''_t$ are linearly disjoint
over $\,\tlk_t$, and $\,\tlk^a_t|\tlk_t$ is the compositum 
$\,\tlk^a_t=k^0_tk'_tk''_t$.
\vskip2pt
\item[] Hence the Galois groups $\,G^0\!=G(k^0_t|\tl k_t)=\GGa{\tl k}$,
$\,G'\!=G(k'_t|\tl k_t)\,$ and $\,G''\!=G(k''_t|\tl k_t)\,$ satisfy:
\vskip4pt
\centerline{The canonical projection
$\,\GGa{k_t}\to G^0\times G'\times G''$ 
is an isomorphism.}
\vskip4pt
\item[{\rm2)}] Concerning generation of $\,G'$ and $\,G''$ one has:
\vskip2pt
\itm{5}{
\item[{\rm a)}] Given a fix generator $\,\tau_\infty\!\in\! T^a_{v_\infty}$ 
there are unique inertia generators $\,(\hb1\tau_\vy\!\in\! T^a_\vy)_{\vy\in\Sigma'}$
which topologically generate $G'$ and satisfy the unique prorelation
$\,\prod_{\vy\in \Sigma'}\tau_\vy=1$.
\vskip2pt
\item[{\rm b)}] $G''$ is profinite-freely generated by any system of inertia 
generators $\,(\tau_\vy\in T^a_\vy)_{\vy\in\Sigma''}$.
}}
\vskip2pt
\itm{10}{
\item[{\rm(II)}] The same holds, correspondingly, for $\,K_t$, and the 
sets of $\,K$-valuations $\,\Sigma'_K,\Sigma''_K\subset\Val_K(K_t)$. 
Further since $\Sigma'=\Val_k(k_t)\cap\Sigma'_K$ and 
$\Sigma''=\Val_k(k_t)\cap\Sigma''_K$, and one has:
\vskip2pt
\centerline{$k^0\subset K^0$, \ $k'_t\subset K'_t$, \ $k''_t\subset K''_t$, \
and \ $k^0=K^0\cap k^a_t$, \ $k'_t=K'_t\cap k^a_t$, \ $k''_t=K''_t\cap k^a_t$.}
}
}
\end{fact}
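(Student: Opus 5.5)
The plan is to make everything explicit through Kummer theory over $\tlk_t$. Since $\mu_\ell\subset\tlk\subset\tlk_t$, one has $\GGa{\tlk_t}=\Hom(\tlk_t^\times\hb3/\ell,\mu_\ell)$. Subextensions of $\tlk^a_t|\tlk_t$ correspond to subgroups of $\tlk_t^\times\hb3/\ell$, and linear disjointness together with the product decomposition of Galois groups corresponds to direct sum decompositions of $\tlk_t^\times\hb3/\ell$. So the first task is to describe $\tlk_t^\times\hb3/\ell$ explicitly.

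By unique factorization in $\tlk[t]$, one has $\tlk_t^\times=\tlk^\times\times\bigoplus_p p^{\lvZ}$, with $p$ running over the monic irreducible polynomials. Hence
\[
\tlk_t^\times\hb3/\ell=\tlk^\times\hb3/\ell\,\oplus A'\oplus A'',
\]
where $A'$, respectively $A''$, is the $\lvZ/\ell$-span of the classes of the $p$ with $v_p\in\Sigma'$, respectively $v_p\in\Sigma''$. This is the tensored exact sequence from the statement, read via the splitting $f\mapsto(a_f,(v_p(f))_p)$; the coordinate at $v_\infty$ is determined by $v_\infty(f)=-\deg f$. The three summands correspond under Kummer theory to $k^0_t$, $k'_t$ and $k''_t$. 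Linear disjointness and the equality $\tlk^a_t=k^0_tk'_tk''_t$ then follow from the direct sum, and Pontryagin duality gives $\GGa{\tlk_t}\cong G^0\times G'\times G''$ with $G^0=\Hom(\tlk^\times\hb3/\ell,\mu_\ell)=\GGa{\tlk}$. This proves (I),~1).

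For (I),~2) I would compute the inertia groups via the tame symbol. Since $\tl\euv$ is discrete with residue characteristic $\neq\ell$, a generator $\tau_{\tl\euv}\in T^a_{\tl\euv}\cong\lvZ/\ell$ acts on $\alpha_f=\root\ell\of f$ by $\zeta^{\tl\euv(f)}$ for a fixed primitive $\zeta\in\mu_\ell$. I would normalize the generators by requiring
\[
\tau_{\tl\euv}(\alpha_f)/\alpha_f=\zeta^{d_{\tl\euv}\,\tl\euv(f)}.
\]
This is possible and unique for $\tl\euv\in\Sigma'$, because $(d_{\tl\euv},\ell)=1$ there and $d_{v_\infty}=1$; the choice of $\zeta$ is exactly encoded by the given $\tau_\infty$. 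By the degree formula $\sum_{\tl\euv}d_{\tl\euv}\tl\euv(f)=0$, restricted to $f\in A'$ (whose valuations are supported on $\Sigma'$), the product $\prod_{\Sigma'}\tau_{\tl\euv}$ acts trivially on $k'_t$, so $\prod_{\Sigma'}\tau_{\tl\euv}=1$ in $G'$. The $\tau_{v_p}$ with $v_p\in\Sigma'$, $p\neq\infty$, are the functionals dual to the basis $\{\alpha_p\}$ of $A'$, so they generate $G'$ freely as an elementary abelian group. It follows that the product relation is the only relation.

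For $\Sigma''$, one has $\ell\mid\deg p$, so $v_\infty$ is unramified in $\tlk_t(\alpha_p)$ by b) of the preceding Notation/Remark. Hence the inertia generators at $\Sigma''$ are, up to units, dual to the basis $\{\alpha_p\}$ of $A''$, and thus freely generate $G''$. I expect this bookkeeping of normalizations, so that "unique" and "the unique prorelation" hold exactly as stated, to be the only delicate point of (I).

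For (II) I would run the identical argument over $\tlK_t=\tlK(t)$ with $\Sigma'_K,\Sigma''_K$, using $(*)$ $d_{v_p}=d_{v_{\tlK,p}}$ and the fact that monic irreducible $p\in\tlk[t]$ remain irreducible in $\tlK[t]$. The inclusions $k^0\subset K^0$, $k'_t\subset K'_t$, $k''_t\subset K''_t$ are then immediate from the generators. The intersection equalities reduce, by Kummer theory, to injectivity of $\tlk_t^\times\hb3/\ell\to\tlK_t^\times\hb3/\ell$ compatibly with the three summands. For the constant part this holds because $\tlk$ is algebraically closed in $\tlK$ ($K|k$ regular), so an $\ell$-th root in $\tlK$ of some $a\in\tlk$ already lies in $\tlk$. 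For the polynomial parts it follows from unique factorization in $\tlK[t]$ and the irreducibility statement above.
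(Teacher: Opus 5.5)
Your proposal is correct and follows the route the paper indicates. The paper omits the argument, saying only that it follows from Kummer theory and Hilbert decomposition theory applied to the tensored sequence $1\to\tlk_t^\times/\ell\to\tlk^\times/\ell\oplus\bigoplus_{\tl\euv\in\Sigma}\tl\euv\,\tlk_t/\ell\to\lvZ/\ell\to 0$, and you carry this out explicitly via the splitting given by unique factorization and the tame description of the inertia groups $T^a_{\tl\euv}$ (with one harmless slip: the Kummer bases of $A'$ and $A''$ are the classes of the polynomials $p$, not the roots $\alpha_p$).
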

%
%
\begin{notation} In the above notation and that from Fact~\ref{fkt-finally-bis},  
for $\tl\euv\in\Val_k(\tlk_t)$ consider the resulting canonical
$\tl\euv$-valuation $\tl w_\tlK:=\tlwv|_{\tlK}\in\Val_k(\tlK)$ and its 
restriction $w_K=\tl w_\tlK|_K=\tlwv|_K.$ Further, denote
by $Z_{\tl w_\tlK|w_K}\subset\tl\Pi^a(T)$ the decomposition 
group of $\tl w_\tlK|w_K$ in $\tl\Pi^a(T).$
\end{notation}
%
%
%
\begin{lemma}
\label{non-trivial} 
In the above notation, there is $\tl\euv\in\Sigma'\!$ such that 
the corresponding $\tl w_\tlK|w_K$ satisfy:
\itm{25}{
\item[{\rm1)}] The $k$-valuations $\tl w_\tlK|w_K$ are non-trivial on $K$. 
In particular, $Kw_K\,|\,k$ is finite.
\vskip2pt
\item[{\rm2)}] One has that $\im(\tl s^a_T)\cap Z_{\tl w_\tlK|w_K}$ is 
an open subgroup in $\,\im(\tl s^a_T)$, hence infinite. 
}
\end{lemma}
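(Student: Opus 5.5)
We argue by contradiction, exploiting the Kummer-theoretic structure of $\GGa{\tlk_t}$ from Fact~\ref{fact3}. Via Fact~\ref{fkt-finally-bis}, the section $\tl s^a_t$ gives an injective map $\varphi:\Sigma=\Val_k(\tlk_t)\to\Val_k(\tl K_t)$, $\tl\euv\mapsto\tlwv$. The properties of $\tlwv$ we use are: $\tlwv|_{\tlk_t}=\tl\euv$, $\tl s^a_t(T^a_{\tl\euv})\subset T^a_{\tlwv}$ and $\tl s^a_t(Z_{\tl\euv^a|\euv})\subset Z_{\tlwv^a|v_\euv}$.

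\textbf{Step 1: a $\tlK$-valuation forces inertia to die in $\tl\Pi^a(T)$.} Suppose that for every $\tl\euv\in\Sigma'$ the restriction $\tl w_\tlK=\tlwv|_\tlK$ is trivial. Then $\tlwv$ is a $\tlK$-valuation of $\tlK_t$ that is nontrivial on $\tlk_t$. By the equivalence (i)$\Leftrightarrow$(ii) in Notation/Remark above, either $\tlwv=v_{\tlK,p}$ with $p\in\tlk[t]$ the polynomial defining $\tl\euv$, or $\tlwv=v_{\tlK,\infty}$. In both cases its inertia group $T^a_{\tlwv}$ lies in the kernel of $\tl q^a_T:\tl\Pi^a(K_t)\to\tl\Pi^a(T)$. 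Indeed, $T^a_{\tlwv}$ acts trivially on $\tlK^a$ (and on the part coming from $X$), because a $\tlK$-valuation of $\tlK(t)$ is unramified over constant extensions. Consequently $\tl q^a_T\circ\tl s^a_t$ kills every $T^a_{\tl\euv}$ with $\tl\euv\in\Sigma'$.

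\textbf{Step 2: contradiction with the lifting identity.} By Fact~\ref{fact3}(I), the inertia generators $(\tau_\euv)_{\euv\in\Sigma'}$ topologically generate the factor $G'$ of $\GGa{\tlk_t}\cong G^0\times G'\times G''$. Moreover $\tl q^a_k$ maps $G'$ trivially to $\tl\Pi^a(k)$, since it is geometric. On the other hand, the lifting identity $\tl s^a_T\circ\tl q^a_k=\tl q^a_T\circ\tl s^c_t$ (projected to $\star=a$) only says that $\tl q^a_T\circ\tl s^a_t$ factors through $\tl q^a_k$. That by itself is consistent with killing $G'$, so the real input must come from $G^0=\GGa\tlk$, which is nontrivial by hypothesis~(H).

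Here is where I expect the main obstacle. We must show that some $\tl\euv\in\Sigma'$, say $v_\infty$ or $v_p$ with $\deg p$ prime to $\ell$, has its full decomposition group $Z_{\tl\euv^a|\euv}\supset T^a_{\tl\euv}\rtimes(\hbox{residue part}\cong\GGa\tlk)$ sent by $\tl s^a_t$ into $Z_{\tlwv^a|v_\euv}$. If $\tlwv|_\tlK$ were trivial, the c.l.\ pair $\clp{I^a}{D^a}$ of Key Lemma~\ref{keylemma1} would then have image in $\tl\Pi^a(T)$ contained in the decomposition group of a constant-field–trivial valuation, which projects isomorphically onto $\tl\Pi^a(k)$. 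The nontrivial commutator relation $[\tlsgm,\tltau]\in\sbgg{\sigma^\beta}$ is nondegenerate on $T\times Z$ through the cyclotomic action, as recorded in Notation/Remark~\ref{nota/rem3.1}. I would use this, together with Theorem~\ref{Topaz} and the independence Lemma~\ref{indepval}, to compare the images of $Z_{v_\infty}$ and $Z_{v_p}$ for two distinct degree-one-mod-$\ell$ places. Their images in $\tl\Pi^a(T)$ would both have to equal $\im(\tl s^a_T)$ modulo something finite. I would then try to derive that the two corresponding $\tlwv$'s have a common nontrivial restriction to $\tlK$, contradicting their restrictions being trivial. This yields some $\tl\euv\in\Sigma'$ with $w_K$ nontrivial, and since $w_K$ is a $k$-valuation of the function field $K$, $Kw_K|k$ is finite. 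That proves~1).

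\textbf{Step 3: openness.} For 2), take $\tl\euv\in\Sigma'$ from Step~2. By Fact~\ref{fkt-finally-bis}(2), $\tl s^a_t(Z_{\tl\euv^a|\euv})\subset Z_{\tlwv^a|v_\euv}$. Projecting by $\tl q^a_T$ maps this into $Z_{\tl w_\tlK|w_K}$. Using $\tl q^a_T\circ\tl s^a_t=\tl s^a_T\circ\tl q^a_k$, the image equals $\tl s^a_T(\tl q^a_k(Z_{\tl\euv^a|\euv}))$. Now $\tl q^a_k(Z_{\tl\euv^a|\euv})$ is the image of the residue Galois group $G(\tlk_t\tl\euv^a|k_t\euv)$, which is open in $\tl\Pi^a(k)$ of index at most $d_\euv$, because $k_t\euv|k$ is finite. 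Hence $\im(\tl s^a_T)\cap Z_{\tl w_\tlK|w_K}$ contains an open subgroup of $\im(\tl s^a_T)$, and it is infinite since $[\tlk^a:k]$ is infinite by~(H).
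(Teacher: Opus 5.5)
Your Step~3 is correct and is essentially the paper's proof of~2). It actually works for every $\tl\euv$, so everything hinges on~1). Part~1), however, is not proved: Step~2 is only a plan, and it looks in the wrong place. Under your contradiction hypothesis every $\tlwv$, $\tl\euv\in\Sigma'$, is $v_{\tlK,p}$ or $v_{\tlK,\infty}$, and these valuations are locally perfectly consistent. For instance, the decomposition group of $v_{\tlK,\infty}$ surjects onto $\tl\Pi^a(K)$, so $Z_{\tl w_{\tlK}|w_K}$ is the whole group. Hence nothing you can see in $\tl\Pi^a(T)$ gives a contradiction: not the killed inertia of Step~1 (as you note yourself), and not a comparison of decomposition images via commuting liftability, Theorem~\ref{Topaz} or Lemma~\ref{indepval}. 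In particular, there is no mechanism for ``deriving a common nontrivial restriction to $\tlK$'' of valuations assumed trivial on $\tlK$. The input also does not come from $G^0$.

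The missing ingredient is global: the reciprocity relation of Fact~\ref{fact3}, (I)\,2a), applied to both $\tlk_t$ and $\tlK_t$, inside $\GGa{\tlK_t}$ rather than after projecting to $\tl\Pi^a(T)$. This is the paper's argument.
\begin{itemize}
\item Under your hypothesis, $\tl\euv\mapsto\tlwv$ maps $\Sigma'$ injectively into $\Sigma'_K$: degrees are preserved by the identity $(*)$ preceding Fact~\ref{fact3}, and $v_\infty\mapsto v_{\tlK,\infty}$. The image is a \emph{proper} subset, since e.g.\ $v_{\tlK,t-a}$ with $a\in\tlK\setminus\tlk$ is not hit.
\item $\tl s^a_t$ maps $\GGa{\tlk_t}$ into $\GGa{\tlK_t}$ and maps $T^a_{\tl\euv}\cong\lvZ/\ell$ isomorphically onto $T^a_{\tlwv}$. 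So the elements $\tl s^a_t(\tau_{\tl\euv})$ are inertia generators at a proper subfamily of the $\Sigma'_K$-places, all lying in the $\Sigma'_K$-factor $G'_K$ of $\GGa{\tlK_t}$.
\item By continuity, $\prod_{\tl\euv\in\Sigma'}\tl s^a_t(\tau_{\tl\euv})=\tl s^a_t\bigl(\prod_{\tl\euv\in\Sigma'}\tau_{\tl\euv}\bigr)=1$.
\item In $G'_K$, any relation $\prod_{\tl w\in\Sigma'_K}\tau_{\tl w}^{b_{\tl w}}=1$ among normalized inertia generators forces all exponents $b_{\tl w}$ to be congruent mod $\ell$. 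Here the exponents are nonzero on the image of $\Sigma'$ and zero off it, which is a contradiction.
\end{itemize}
Hence some $\tl\euv\in\Sigma'$ has $\tlwv|_{\tlK}$ nontrivial, which gives~1). Your Step~3 then applies to that $\tl\euv$.
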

\begin{proof} To 1): Obviously, $w_K$ is trivial 
iff $\tl w_K$ is trivial. {\it By contradiction,\/} suppose that the 
assertion of Claim~1 does not hold, that is, for every $\tl\euv\in\Sigma'$ 
the resulting $\tlwv$ is trivial on $K$. If so, then the map 
$\varphi:\Sigma'\to\Val_k(\tl K_t)$, $\tl\euv\mapsto\tlw_{\tl\euv}$ defined 
in Fact~\ref{fkt-finally-bis} has image $\varphi(\Sigma')\subset\Val_K(K_t)$
such that, by Fact~\ref{fkt-finally-bis},~1), $\tlwv|_{\tl k_t}=\tl\euv$ for all 
$\tl\euv\in\Val_k(\tlk_t)$. Therefore, if $\tl\euv=v_\infty$, then 
$\tlwv=v_{\tl K,\infty}$, and if $\tl\vy=v_p$ with $p\in \tl k[t]$ 
monic irreducible, then $\tlwv=v_{\tl K,p}$ on 
$\tl K_t$. Thus by~$(*)$ above, 
$[\tlk_t \tl\vy:\tlk]=d_p=[\tlK_t v_{\tlK,p}:\tlK]$,  
concluding that $\varphi(\Sigma')\subset\Sigma'_K.$ Hence the map 
below is a bijection
\[
\varphi:\Sigma'\to\varphi(\Sigma')\subset\Sigma'_K, \ \
   \tl\euv\mapsto\tlwv \ \ \hbox{with $\varphi(\Sigma')\subset\Sigma'_K$ stictly,}
\]
which via $\tl s_t^a\!:\tl\Pi^a(k_t)\to \tl\Pi^a(K_t)$ 
and $\tl p^a_{K_t}\!:\tl\Pi^a(K_t)\to \tl\Pi^a(k_t)$
is compatible with decomposition and inertia groups, i.e., 
if $\tl\euv\leftrightarrow \tlwv\vrg$ then
$\tl s^a_t(T^a_{\tl\euv})\!=\!T^a_{\tlwv}\vrg$ 
$\tl s^a_t(Z^a_{\tl\euv})\!\subset\! Z^a_{\tlwv}\vrg$
$\tl p^a_{K_t}(Z^a_{\tlwv})\!=\!Z^a_{\tl\euv}\vrg$ and the residue fields
satisfy $\tl K_t\tlwv=K\,\tlk_t \tl\vy$. \ Let $\,\tau_{\infty}\in T_{v_\infty}$ 
be a fixed inertia generator, hence $\,\tau_{{\tlK,\infty}}\!:=
\tl s^a_t(\tau_{\infty})\in T_{v_{\tlK,\infty}}$ generates $T_{v_{\tlK,\infty}}$
and $\tl p^a_{K_t}(\tau_{{\tlK,\infty}})=\tau_{\infty}$.
Further, let $(\tau_{\tl\euv}\in T^a_{\tl\euv})_{\tl\euv\in\Sigma'}$ 
and $(\tau_{\tl\euv}\in T^a_{\tl\euv})_{\tl\euv\in\Sigma''}$ and 
$(\tau_{\tlw}\in T^a_{\tlw})_{\tlw\in\Sigma'_K}$, 
$(\tau_{\tlw}\in T^a_{\tlw})_{\tlw\in\Sigma''_K}$ be systems 
of inertia generators as in Fact~\ref{fact3},~2) with
$\tau_{v_\infty}=\tau_\infty$, $\tau_{v_{K,\infty}}=\tau_{K,\infty}$.
\vskip2pt
{\bf Conclude} that $(\tau_{\tlwv})_{\tlwv\in\varphi(\Sigma')}
=\big(\tl s^a_t(\tau_{\tlv})\big)_{\tlv\in\Sigma'}$ is a proper subsystem of 
$(\tau_{\tlw})_{\tlw\in\Sigma'_K}$ such that 
\[
\sclx99{$\prod$}_{\tlwv\in\varphi(\Sigma')}\tau_{\tlwv}=
     \sclx99{$\prod$}_{\tl\euv\in\Sigma'}\tl s^a_t(\tau_{\tl\euv})=
\tl s^a_t(\sclx99{$\prod$}_{\tl\euv\in\Sigma'}\tau_{\tl\euv})=\tl s^a_t(1)=1.
\] 
Hence we reached a contradiction, and assertion~1) is proved.
\vskip4pt
To 2): Recall the inclusion $\varphi:\Val_k(\tl k_t)\hra\Val_k(\tl K_t)$, 
$\tl\euv\mapsto\tlwv$ from Fact~\ref{fkt-finally-bis}, and that by loc.cit.,~2), 
one has $\tl p^a(Z^a_{\tlwv})=Z^a_{\tl\euv}\subset\tl\Pi^a(\tlk_t)$ 
and $\tl p^a(Z_{\tlwv^a|v_\euv})=Z_{\tl\euv^a|\euv}\subset \tl\Pi(k_t).$ In terms
of decomposition fields, that implies the equality $k_{\tlw_\tlK}\!:=\tlK_t\tlwv\cap\tlk^a
=\tlk_t\tl\vy\cap \tlk^a=:\!k_{\tl\euv}$ as finite extension of $\tlk$. 
To proceed, recall the commutative diagrams:
\[
\begin{matrix}
\\[-9mm]
\tl\Pi^a(K_t)&\horsrjr{\tl p^a_{t}}&\tl\Pi^a(k_t)&\scalebox{.01}[3]{|} 
       &\tl\Pi^a(K_t)&\horsrjl{{\ \tl s^a_t}}&\tl\Pi^a(k_t)\cr
   \\[-5mm]
\dwn{\tl q^a_T}&&\dwn{\tl q^a_k} &\ha{30} & 
             \dwn{\tl q^a_T}&&\dwn{\tl q^a_k}\cr
   \\[-3mm]
\tl\Pi^a(T)&\horsrjr{\tl p^a_T}&\tl\Pi^a(k)& &\tl\Pi^a(T)&\horsrjl{\ \tl s^a_T}&\tl\Pi^a(k)\cr
\end{matrix}
\]

Since $\tl w_\tlK=\tlwv|_\tlK$ and $w_K=\tlwv|_K=v_\euv|_K$, by basics of Hilbert 
decomposition theory, one has $\tl q^a_T(Z_{\tlwv^a|v_\euv})\subset Z_{\tl w_\tlK|w_K}.$
Hence since $\tl q^a_T\circ\tl s^a_t=\tl s^a_T\circ q^a_{k}$, and taking into account 
that $\tl q^a_{k}(Z_{\tl\euv^a|\euv})=G(\tlk^a|k_{\tl\euv})$, one has finally 
commutative sub-diagrams of the above ones:
\[
\begin{matrix}
  \\[-4.25mm]
Z_{\tlwv^a|v_\euv}&\horsrjr{\tl p^a_{t}}&Z_{\tl\euv^a|\euv}&
    &Z^a_{\tlwv^a|v_\euv}&\horsrjl{\ \tl s^a_{t}}&Z_{\tl\euv^a|\euv}\cr
  \\[-3mm]  
\dwn{\tl q^a_T}&&\dwn{\tl q^a_k} &\ha{30} & 
                        \dwn{\tl q^a_T}&&\dwn{\tl q^a_k}\cr
  \\[-3mm]
Z_{\tl w_\tlK|w_K}&\horsrjr{\tl p^a_T}&G(\tlk^a|k_{\tl\euv})& 
   &Z_{\tl w_\tlK|w_K}&\horsrjl{\ \tl s^a_T}&G(\tlk^a|k_{\tl\euv})\cr
\end{matrix}
\]
that is, $\,\tl s^a_T\pml G(\tlk^a|k_{\tl\euv})\pmr\subset Z_{\tl w_\tlK|w_K}$.
In particular, since $\tlk^a| k$ satisfies Hypothesis~{\sf(H)}, thus
$\tlk^a| k$ has infinite degree, and $k_{\tl\euv}|k$ has finite degree
by the discussion above, one has the following:
\[
\im(\tl s^a_T)\cap Z_{\tl w_\tlK|w_K}\supset 
\tl s^a_T\pml G(\tlk^a|k_{\tl\euv})\cap Z_{\tl w_\tlK|w_K} \ 
\hbox{ are open subgroups of } \im(\tl s^a_T)\,.
\leqno{\indent(*)_{\tl s^a_K}}
\]
This completes the proof of assertion~2). 
\end{proof}
\noindent
Next we notice that in the case $T=K$, one can sharpen the above 
Lemma~\ref{non-trivial} as follows.
%
%
\begin{keylemma}
\label{key-lemma-1} 
In context from Lemma~\ref{non-trivial}, let
$T=K.$  Then $\tl w_\tlK|w_K$ from loc.cit.\ sartisfy:
\vskip2pt
\itm{25}{
\item[{\rm 1)}] $\tl w_\tlK\in\Val_k(K)$ depends on $\tl s^a_K$ only and 
not on the specific $\tl\euv\in\Val_k(\tlk_t)$ used to define it.
\vskip2pt
\item[{\rm2)}] $\tl K\tl w_\tlK\ha1|\ha1k$ is algebraic, $Kw_K\cap\tl k=k_t\euv\cap\tl k$, 
and $Kw_K|k$ and $\tl k|k$ are linearly disjoint over $k.$ 
}
\end{keylemma}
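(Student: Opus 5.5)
We fix $T=K$ and a $\tl k|k$-$t$-a.b.c.\ birational lift $\tl s^c_t$ of $\tl s^a_K$. For each $\tl\euv\in\Sigma'$ that Lemma~\ref{non-trivial} provides, we set $\tl w_\tlK=\tlwv|_\tlK$. The plan is to show, first, that all such $\tl w_\tlK$ coincide; this is assertion~1). The residue-field statements in assertion~2) are then read off from the decomposition groups already computed in the proof of Lemma~\ref{non-trivial},~2).

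\textbf{Step 1 (independence of $\tl\euv$).} Let $\tl\euv_1,\tl\euv_2\in\Sigma'$ both yield valuations $\tl w_i:=\tl w_{\tl\euv_i}|_\tlK$ that are non-trivial on $K$. By Lemma~\ref{non-trivial},~2), or more precisely by $(*)_{\tl s^a_K}$ in its proof, the subgroups $\tl s^a_K\pml G(\tlk^a|k_{\tl\euv_i})\pmr$ lie in $Z_{\tl w_i|w_{K,i}}\subset\tl\Pi^a(K)$. Both are open in $\im(\tl s^a_K)$, so their intersection $H$ is open in $\im(\tl s^a_K)$. By hypothesis~{\sf(H)}, $G(\tlk^a|k)$ is infinite of order divisible by $\ell$, so $H$ is non-trivial. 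I would pass to the geometric part $\GGa{\tlK}$ over $\tlK$ as follows. Take an element $g\in H$ whose image in $G(\tlk^a|\tlk)=\GGa{\tl k}$ is non-trivial; this is possible because $G(\tlk^a|k_{\tl\euv_1}k_{\tl\euv_2})$ is open in the infinite group $G(\tlk^a|k)$. The essential point is that $g$ then lies in the intersection of the decomposition groups, in $G(\tlK^a|K)$, of the two prolongations of $\tl w_i$. Now $\tl w_1,\tl w_2$ are $\tlk$-trivial valuations of the function field of a curve, hence discrete of rank one and trivial on constants. If they were distinct, they would be independent. I then intend to apply Proposition~\ref{fkt01},~1) together with Lemma~\ref{indepval}. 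Since $\tlK$ is a one-dimensional function field over $\tlk$, distinct prime divisors have decomposition groups in $\GGa\tlK$ meeting only in $\Gal(\tlK^a|\tlK\tlk^a)$-trivial elements. Using the Kummer exact sequence for divisors, analogous to Fact~\ref{fact3}, one sees that elements acting non-trivially on constants $\tlk^a$ cannot simultaneously fix two distinct places of $\tlK^a$. This is the same mechanism as Notations/Remarks~\ref{CND},~b). It yields $\tl w_1=\tl w_2$ and proves~1).

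\textbf{Step 2 (residue fields).} $\tl w_\tlK$ is a non-trivial $\tlk$-valuation of a one-variable function field, hence a prime divisor, and $\tlK\tl w_\tlK|\tlk$ is finite. In particular it is algebraic over $k$, because $\tl k|k$ is algebraic. In the proof of Lemma~\ref{non-trivial},~2) we obtained $\tl p^a_K(Z_{\tl w_\tlK|w_K})\supset \tl q^a_k(Z_{\tl\euv^a|\euv})=G(\tlk^a|k_{\tl\euv})$. Comparing decomposition fields inside $\tlk^a$ gives $Kw_K\cap\tlk^a=k_t\euv\cap\tlk^a$. Intersecting with $\tl k$ gives $Kw_K\cap\tl k=k_t\euv\cap\tl k$. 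For $\tl\euv=v_\infty$, which lies in $\Sigma'$ with $d=1$, the right-hand side is $k$; by Step~1 we may use any admissible $\tl\euv$, so choose $v_\infty$ if it is admissible. Linear disjointness of $Kw_K$ and $\tl k$ over $k$ then follows from $Kw_K\cap\tl k=k$ together with the Galois property of $\tl k|k$.

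\textbf{Main obstacle.} Step~1 is the delicate part: deducing equality of two prime divisors from a common arithmetic, non-geometric element in their decomposition groups. If the direct argument fails, I would instead compare the canonical $s^\star$-valuations $\tlwv$ for $\tl\euv_1,\tl\euv_2$ via the Key Lemma~\ref{key-lemma} and Proposition~\ref{fktTopaz1}. The idea is to show that both have restriction to $\tlK$ equal to the valuation determined by the maximal c.l.\ pair $\tl s^a_K$-image, using Theorem~\ref{Topaz} and Remark~\ref{unique}. A second point to check is that $v_\infty$ can indeed be chosen in Step~2. Otherwise I would argue with a $\tl\euv\in\Sigma'$ of degree prime to $\ell$, and use Step~1 to vary $\tl\euv$ so that the intersection $\bigcap_{\tl\euv}k_t\euv\cap\tl k$ collapses to $k$.
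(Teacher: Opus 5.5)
Up to forming $H$ you follow the paper, but your passage to $\GGa{\tlK}$ in Step~1 fails. Write $H=\tl s^a_K(U)$ with $U=G(\tlk^a|k_{\tl\euv_1})\cap G(\tlk^a|k_{\tl\euv_2})$. Since $\GGa{\tlK}=G(\tlK^a|\tlK)$ is the preimage of $G(\tlk^a|\tlk)$ under $\tl p^a_K$, you need $U\cap G(\tlk^a|\tlk)\neq1$. Openness of $U$ in the infinite group $G(\tlk^a|k)$ does not give this, because {\sf(H)} only requires $[\tlk^a:k]=\infty$, and this may come entirely from $\tlk|k$. For $\tlk=\oli k$, the case behind Theorem~\ref{MThm1}, one has $\tlk^a=\tlk$, so $H\cap\GGa{\tlK}=1$. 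Your reduction is fine when $G(\tlk^a|\tlk)$ is infinite, e.g.\ under {\sf(H0)}; then Lemma~\ref{indepval} inside $\GGa{\tlK}$ does give $\tl w_1=\tl w_2$.

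Your fallback principle is false: an element acting non-trivially on constants can lie in the decomposition groups of two distinct places. Take $\mu_\ell\subset k$, $\tlk=\oli k$, $K=k(u)$, the places $u=0,\infty$, and $1\neq g\in G(\oli k|k)$ generating a procyclic group of order prime to $\ell$. Every finite extension of the fixed field of $g$ has $\ell$-divisible multiplicative group. So there are $\ell$-th roots $\gamma_a$ of $-a$, for $a\in\oli k^\times$, with $g(\gamma_a)=\gamma_{g(a)}$. Let $\beta_a$ be $\ell$-th roots of $u-a$, and choose prolongations with residues $\beta_a\mapsto\gamma_a$ at $u=0$ and $\beta_a/\beta_0\mapsto1$ at $u=\infty$, for $a\neq0$. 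Then the lift $\tilde g$ of $g$ with $\tilde g(\beta_a)=\beta_{g(a)}$ lies in both decomposition groups in $G(\tlK^a|K)$.

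The paper never passes to $\GGa{\tlK}$. It places the subgroup $H$ itself, which is \emph{open} in $\im(\tl s^a_K)$, inside $Z_{\tl w_\tlK|w_K}\cap Z_{\tl w'_\tlK|w'_K}$. It then concludes from the independence of distinct $k$-valuations of $K$ together with Lemma~\ref{indepval}. Any justification of this step must use that openness, for instance by testing $H$ against Kummer characters of constants that are not $\ell$-th powers in a suitable finite extension of $k$. Single elements carry no information here.

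Step~2 has two further problems. First, $\tl p^a_K(Z_{\tl w_\tlK|w_K})\supset G(\tlk^a|k_{\tl\euv})$ only gives $Kw_K\cap\tlk^a\subseteq k_t\euv\cap\tlk^a$, not equality. More seriously, you reduce linear disjointness to choosing $\tl\euv=v_\infty$. But Lemma~\ref{non-trivial} only produces \emph{some} $\tl\euv\in\Sigma'$ with $w_K$ non-trivial, with no control over which one. Also, $\ell\nmid d_{\tl\euv}$ says nothing about $k_t\euv\cap\tlk$.

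The missing idea is the paper's conjugation argument, which needs no choice of $\tl\euv$. Take $\sigma\in G(\tlk|k)$ with a preimage $\sigma^a\in\tl\Pi^a(k)$, and conjugate by $\tl s^a_K(\sigma^a)$. This conjugation preserves $\im(\tl s^a_K)$ and sends the open subgroup $\tl s^a_K\big(G(\tlk^a|k_{\tl\euv})\big)$ to another open subgroup of $\im(\tl s^a_K)$. It also sends $Z_{\tl w_\tlK|w_K}$ to the decomposition group of the conjugate prolongation $\tl w_\tlK\circ\sigma^{-1}$. The two open subgroups meet in a non-trivial open subgroup lying in both decomposition groups. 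The separation argument of Step~1, properly justified, then gives $\tl w_\tlK\circ\sigma^{-1}=\tl w_\tlK$ for every $\sigma$. Hence $w_K$ has a unique prolongation to $\tlK$, which forces $Kw_K\cap\tlk=k$, and linear disjointness follows because $\tlk|k$ is Galois.
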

\begin{proof} We proceed along the following steps.
\vskip5pt
\noindent
{\bf Claim 1}. The non-trivial valuation $\,w_K\!:=\tlwv|_K$ from 
Lemma~\ref{non-trivial} does not dependent of $\tl\euv$.
\vskip5pt
\noindent

Indeed, if $\tl\euv'\in\Val_k(\tlk_t)$ and the resulting 
$\tl w'_\tlK\!:=\tlw'_{\tl\euv'}\in\Val_k(\tl K_t)$ and/or $w'_K\!:=\tlw'_{\tl\euv'}|_K$ 
is non-trivial, then the corresponding $k_{\tl\euv'}|k$ is finite. 
Hence $G(\tlk_{\tl\euv'}|k)\subset G(\tlk|k)$ is open and 
$\tl s^a_K\pml G(\tlk^a|k_{\tl\euv'})\pmr=\im(s^a_K)\cap Z_{\tl w'_\tlK|w'_K}.$
Therefore, $G_{\tl\euv,\tl\euv'}\!:=G(\tlk|k_{\tl\euv})\cap 
G(\tlk_{\tl\euv'}|k)\subset G(\tlk|k)$ is an open subgroup as 
well, and we conclude: $\tl s^a_K(G_{\tl\euv,\tl\euv'})\subset\im(\tl s^a_K)$
is open, thus infinite, implying:
\[
1\neq \tl s^a_K(G_{\tl\euv,\tl\euv'})\subset \tl s^a_K\pml G(\tlk^a|k_{\tl\euv})
\pmr\cap \tl s^a_K\pml G(\tlk^a|k_{\tl\euv'})\pmr 
\subset Z_{\tl w_\tlK|w_K}\cap Z_{\tl w'_\tlK|w'_K}.
\] 

Finally, the $k$-valuations 
of $\tlK$ are discrete\ha3---\ha1by the fact that $K=k(X)$ is the 
function field of a $k$-curve $X$, and therefore, non-equivalent 
$k$-valuation of $K$ are independent. Hence we must have 
$w_K=w'_K$ by Lemma~\ref{indepval}, thus concluding the 
proof of Claim 1.
\vskip5pt
{\bf Claim 2.} $Kw_K|k$ and $\tlk|k$ are linearly disjoint over $k$.
\vskip5pt\noindent
Indeed, identify $G(\tlK|K)=:\!G\!:=G(\tlk|k)$ under 
$G(\tlK|K)\hrr{\tl\imath}G(\tlk|k)$, and recall that \hbox{$k_{\tl\euv}|k\hra \tlk|k$} 
is a finite subextension, where $k_{\tl\euv}\!:=Kw_K\cap\tlk$. 
By Hilbert decomposition theory, $G$ acts transitively on the set 
$\clV_{w_K}$ of prolongation $\tl w'_K|w_K$ of $w_K$ to $\tlK| K$ by 
$\tl w_K^\sigma=\tl w_K\circ\sigma^{-1}\!$, and $Z_{\tl w_K|w_K}$ is the 
stabilizer of~$\tl w_K$. Setting $\tl w'_K\!:=\tl w_K^\sigma\!$ and 
$\tl\euv'\!:=\tl\euv^\sigma\!\!$, one has: First, if $\sigma^a\mapsto\sigma$ 
under $\tl\Pi^a(k)\to G$, then $\sigma(k_{\tl\euv})=k_{\tl\euv'}$, thus 
$G(\tlk|k_{\tl\euv'})=G(\tlk|k_{\tl\euv})^{\sigma^a}\subset G(\tlk|k)$
is an open subgroup of $G(\tlk|k).$ 
Second, if $\sigma^a_K\mapsto\sigma$ under \hbox{$G(\tlK^a|k)\to G$}, 
then $Z_{\tl w'^a_K|w_K}=Z_{\tl w^a_K|w_K}^{\sigma_K^a}$ inside 
$\tl\Pi^a(K)$. In particular, choosing $\sigma^a_K\!:=\tl s^a_K(\sigma^a)$, 
thus $\sigma^a_K\in\im(\tl s^a_K)$, the following hold:
\vskip2pt
 a)  $\im(\tl s^a_K)\subset \tl\Pi^a(K)$ is invariant under the 
 $\sigma^a$-conjugation.
\vskip2pt
b) $\tl s^a_K\pml G(\tlk^a|k_{\tl\euv'})\pmr\subset\im(\tl s^a_K)$ is open, 
and so is $\tl s^a_K\pml G(\tlk^a|k_{\tl\euv})\pmr\subset\im(\tl s^a_K).$ 
\vskip1pt
c) $\tl s^a_K\pml G(\tlk^a|k_{\tl\euv'})\pmr=\tl s^a_K\pml G(\tlk^a|k_{\tl\euv})\pmr^{\sigma^a}
\subset Z_{\tl w^a_K|w_K}^{\sigma^a}=Z_{\tl w'^a_K|\euw}$, because
$\tl s^a_K\pml G(\tlk^a|k_{\tl\euv})\pmr\subset Z_{\tl w^a_K|w_K}.$
\vskip2pt
\noindent
{\bf Conclude:}
$G_{\tl\euv,\tl\euv'}\!:=\tl s^a_K\pml G(\tlk^a|k_{\tl\euv'})\pmr
\cap \tl s^a_K\pml G(\tlk^a|k_{\tl\euv})\pmr\subset \im(\tl s^a_K)$
is open in $\im(\tl s^a_K)$, hence infinite, and 
$
1\neq G_{\tl\euv,\tl\euv'}\subset \tl s^a_K\pml G(\tlk^a|k_{\tl\euv'})\pmr
\cap \tl s^a_K\pml G(\tlk^a|k_{\tl\euv})\pmr\subset
Z_{\tl w'^a_K|w_K}\cap Z_{\tl w^a_K|w_K}\,.
$
Hence arguing as in the proof of Claim 1, one gets $\tl w_K=\tl w'_K.$ 
Equivalently, $\sigma\in Z_{\tl w_K|w_K}$, thus finally, implying 
that $Kw_K\cap\tlk=k$, as claimed. This concludes the proof of 
Key Lemma~\ref{key-lemma-1}.
\end{proof}
\subsection{\bf Concluding the proof of Theorem~\ref{MThm}}$\ha0$
\vskip2pt
In the context/notation of Theorem~\ref{MThm}, we notice that
assertion~2) follows from assertion~1) by mere definitions,
and assertion~3) follows from Lemma~\ref{non-trivial},~2).
Thus it is left to prove assertion~1). 
\vskip2pt
This being said, let $\tl s^a:\tl\Pi^a(k)\to \tl\Pi^a(K)$ be a 
given $\bm t$-a.b.c.\ birationally liftable section of the 
canonical projection $\tl p^a:\tl\Pi^a(K)\to \tl\Pi^a(k)$. Then by~Key 
Lemma~\ref{key-lemma-1}, there is a unique non-trivial
$\tl w_\tlK\in\Val_k(K)$ which together with its 
restriction $w_K\!:=(\tl w_\tlK)|_K\in\Val_k(K)$ satisfy:
\vskip5pt
$(*)$ \  $Kw_K|k$ and $\tl k|k$ are linearly disjoint over $k.$
\vskip5pt
Since $K=k(X)$ with $X$ a complete normal $k$-curve and 
$w_K\in\Val_k(K)$, it follows that $w_K$ has a center 
$x_{w_K}\in X$ such that $\clO_{w_K}=\clO_{x_{w_K}}$ 
and $\eum_{w_K}=\eum_{x_{w_K}}\!$, and similarly, 
$\tl w_\tlK$ has a center $x_{\tl w_\tlK}\in\tl X=X_{\tlk}$ such that 
$x_{\tl w_\tlK}\mapsto x_{w_K}$ under the canonical projection
$\tl X\to X$. In particular, $\kappa_{x_{w_K}}=K w_K$, 
and therefore, by $(*)$ above, it follows that $\kappa_{x_{\tl w_\tlK}}|k$ 
and $\tlk|k$ are linearly disjoint over $k$. And $\tl w_\tlK|w_K$ 
are defined by the points $x_{\tl w_\tlK}| x_{w_K}$ as 
required in Theorem~\ref{MThm}. Finally, by the uniqueness 
part of the Key Lemma~\ref{key-lemma-1}, one has that
the points 
$x_{\tl w_\tlK}| x_{w_K}$ are unique with the property 
that $\im(\tl s^a)\cap Z_{x_{\tl w_\tlK}|x_{w_K}}\neq1.$
This concludes the proof of Theorem~\ref{MThm}.
\section{Final comments/Open questions}
Naturally, the ``elephant in the room'' is the question 
whether the BSC holds in the geometric case, i.e., for 
geometrically integral normal $k$-curves $X$, where 
$k=k_0(Z)$ is a non-constant function field with $k_0$ 
not $\ell$-closed for some $\ell\neq\chr(k)$. This being 
said, here is a short list of questions which might be 
addressed with methods similar to the ones developed 
in this manuscript (and some {\it new\/} ideas\ha1), 
notations being as introduced in the Introduction.
\vskip7pt 
\noindent \ \
0) Prove all the above results for $\ell=2$, provided $\chr\neq2$
(after replacing $\mu_\ell$ by $\mu_4$).
\vskip5pt
\noindent \ \
1) Suppose that $k|\lvQ$ is f.g.\ and $\oli{\lvQ}\subset\tl k.$ Prove 
the $\tl k|k$-$\bm t$-\ha1minimalistic BSC.
\vskip5pt
\noindent \ \
2) Suppose that $\mu_{2\ell}\subset \tl k$ and $\tl k^\times\hb3/\ell$ infinite. 
Does the $\tl k|k$-$\bm t$-\ha1minimalistic BSC hold\ha1? 
\vskip5pt
\noindent \ \
3) \scalebox{.99}[1]{Replacing $\lvP^1_t$ (in the $\bm t$-BSC) by a 
$k$-curve or a $k$-variety $Z$, formulate \& prove the $Z$-BSC.}
\vskip5pt
\noindent \ \
4) For $X$ a proper smooth $k$-variety, formulate \& prove
the $t$-BSC over $X$ (maybe the $Z$-BSC).
\vskip5pt
\noindent \ \
5)  Let $k$ be as above. Does the (minimalistic) BSC hold for 
$K_t|k_t$, e.g., for $k=k_0(u)$, $k_0=\oli k_0\,$?
\vskip3pt \ \ 
\bltt \ This would sharpen {\sc Bogomolov--Rovinsky--Tschinkel}~\cite{BRT} over 
$k\!:=k_0(\hb1t,\hb1u)$, $k_0=\oli k_0$.


\begin{thebibliography}{XXXX}

     


    



\bibitem[BRT]{BRT} F.A.\ha2Bogomolov, M.\ha2Rovinsky,\ha2 Y.\ha2Tschinkel, 
{\it Homomorphisms of multiplicative groups of fields preserving algebraic dependence,\/}
European\ha2J.\ha2Math, {\bf9} (2019), 656--685.


\bibitem[Be]{Be} Bresciani, G., {\it On the birational section 
    conjecture with strong birationality assumptions,\/} Invent.\ha3Math.
    {\bf235} (2024), 129–150.

\bibitem[B-V]{B-V} Bresciani, G.\ and Vistoli, A.,
      \textit{An elementary approach to Stix’s proof of the real 
      section conjecture}, (2020). See  {\sf arXiv:2012.06278 [math.AG], 18 Dec 2020}.



%
%
%

\bibitem[BOU]{BOU} Bourbaki, Alg\`ebre commutative, Hermann Paris 1964.











\bibitem[Fa]{Fa} Faltings, G., {\it Curves and their fundamental 
       groups {\rm (following Grothendieck, Tamagawa and Mochizuki),}}
         Ast\'erisque, Vol {\bf252} (1998) Expos\'e 840.

\bibitem[F-J]{F-J} Fried, M.\ and Jarden, M., Field Arithmetic
    (third revised edition). Ergebnisse der Mathematik und 
    ihrer Grenzgebiete, 3. Folge; Springer Verlag, ISSN: 0071-1136.

\bibitem[GGA]{GGA} Geometric Galois Actions~I, LMS LNS Vol {\bf 242},
     eds L.~Schneps -- P.~Lochak, Cambridge Univ.\ Press 1998.

\bibitem[G1]{G1} Grothendieck, A., {\it Letter to Faltings, June 1983},
    See [GGA].

\bibitem[G2]{G2} Grothendieck, A., {\it Esquisse d'un programme, 1984}.
    See [GGA].
    



\bibitem[H-Sz]{H-Sz} Harari, D. and Szamuely, T., {\it Galois sections 
for abelianized fundamental groups,\/} {\it Appendix\/} by E. V. Flynn, 
Math. Annalen {\bf344} (2009), 779--800.
       




   
       


\bibitem[Ko1]{Ko1} Koenigsmann, J.,
   {\it On the `section conjecture' in anabelian geometry,}
   J.\ reine angew.\ Math. {\bf588} (2005), 221--235.

\bibitem[Ko2]{Ko2} Koenigsmann, J., {\it Solvable absolute Galois
   groups are metabelian,\/} Inventiones Math.\ {\bf 144} (2001), 1--22.

\bibitem[KPR]{KPR}  Kuhlmann, F.-V., Pank, M., Roquette, P., 
   {\it Immediate and purely wild extensions of valued fields,}
   Manuscripta Math. {\bf55} (1986), 39--67.   
   



     
       

\bibitem[Lu]{Lu} L\"udtke, M., {\it The $p$-adic section conjecture 
 for localisations of curves,\/} Dissertation, 2020. See 
 {\sf urn:nbn:de:hebis:30:3-574318}.
   

 \bibitem[Mz1]{Mz1}
      Mochizuki, Sh., \textit{Topics surrounding the anabelian geometry of hyperbolic curves}, in: Galois groups and fundamental groups, Math. Sci. Res. Inst. Publ. \textbf{41} (1990), 120--140.

\bibitem[Mz2]{Mz2} Mochizuki, Sh., {\it The local pro-$p$ Grothendieck 
conjecture for hyperbolic curves,\/} Invent. Math.\ha3{\bf138} (1999), 319--423.

\bibitem[Mz3]{Mz3} Mochizuki, Sh., \textit{Topics surrounding the anabelian 
    geometry of hyperbolic curves}, Math.Sci.Res.Inst.Publ. {\bf41} (2003), 119--165.

\bibitem[Mz4]{Mz4} Mocizuki, Sh., {\it Absolute anabelian cuspidalizations 
of proper hyperbolic curves.,\/} J. Math. Kyoto Univ. {\bf47} (2007), 451--539.

\bibitem[Mu]{Mu} Mumford, D., The red book of varieties and schemes,
     LNM 1358, 2nd edition, Springer Verlag 1999.
     
\bibitem[Na]{Na}  Nakamura, H., {\it Galois rigidity of the \'etale 
fundamental groups of punctured projective lines,\/} J. reine angew. 
Math. {\bf411} (1990) 205--216.    



%











\bibitem[P0]{P0} Pop, F., {\it Galoissche Kennzeichnung 
$p$-adisch abgeschlossener K\"orper,\/} J.\ha2reine\ha2angew.\ha2Math. 
{\bf392} (1988), 145--175. 

{\it $\lvZ/\ell$ abelian-by-central 
   Galois theory of prime divisors,\/} in: The Arithmetic of Fundamental 
   Groups: PIA 2010, ed. Jakob Stix; Springer-Verlag 2012; 225--244. 

\bibitem[P1]{P1} Pop, F., {\it $\lvZ/\ell$ abelian-by-central 
   Galois theory of prime divisors,\/} in: The Arithmetic of Fundamental 
   Groups: PIA 2010, ed. Jakob Stix; Springer-Verlag 2012; 225--244. 

\bibitem[P2]{P2} Pop, F.,
   {\it On the birational $p$-adic section conjecture},
   Compositio Math. {\bf146} (2010), 621--637.
   
\bibitem[P3]{P3} Pop, F.,  {\it Z/p metabelian birational $p$-adic 
section conjecture for varieties,\/} Compositio Math. {\bf153} (2017), 1433--1445.   

  






\bibitem[St0]{St0} Stix, J., {\it Rational points and arithmetic of 
fundamental groups. Evidence for the section conjecture,\/}
Vol.\ha2{\bf2054}, LNM Berlin, Springer, 2013.

\bibitem[St1]{St1} Stix, J., {\it Birational $p$-adic Galois sections in 
     higher dimensions,\/} Israel J.\ha3Math {\bf198} (2013), 49--61.

     
\bibitem[St2]{St2} Stix, J., \textit{On the birational section conjecture 
     with local conditions}, Invent. Math. \textbf{199} (2015), 239--265.     

\bibitem[Sz]{Sz} Szamuely, T., {\it Groupes de Galois de corps de type fini
   $($d'apr\`es Pop$\ha1)$,\/} Ast\'erisque {\bf 294} (2004), 403--431.


\bibitem[Ta]{Ta}  Tamagawa, A., {\it The Grothendieck conjecture for 
affine curves,\/} Compositio Math.\ha3{\bf109} (1997), 135--194. 

\bibitem[To1]{To1} Topaz, A., {\it Commuting-Liftable Subgroups 
   of Galois Groups II,\/} J. reine angew. Math.\ {\bf 730} (2017), 65--133. 





\bibitem[Wk]{Wk} Wickelgren, K., \textit{$2$-nilpotent real Section Conjecture}, 
     Math. Ann. {\bf 358} (2014), 361--387.

\end{thebibliography}
\end{document}